\newtheorem{theorem}{Theorem}[section]
\newtheorem{lemma}[theorem]{Lemma}
\newtheorem{corol}[theorem]{Corollary}
\newtheorem{propo}[theorem]{Proposition}
\newtheorem{remark}[theorem]{Remark}
\newtheorem{assump}[theorem]{Assumption}
\newcommand\la{\lambda}
\newcommand{\ipl}{\langle}
\newcommand{\ipr}{\rangle}
\newcommand\summ{\textstyle\sum\limits}
\newcommand\T{\textstyle}
\newcommand\D{\displaystyle}
\newcommand\norm[1]{\|#1\|}
\newcommand\R{\mathbb{R}}
\newcommand\N{\mathbb{N}}
\DeclareMathOperator{\argmin}{arg\, min}
\begin{document}

\title{On inertial Levenberg-Marquardt type methods for solving nonlinear
ill-posed operator equations}
\setcounter{footnote}{1}

\author{
A.~Leit\~ao%
\thanks{\mbox{Department of Mathematics, Federal Univ.\,of St.\,Catarina,
        88040-900 Floripa, Brazil}}
\and
D.A.\,Lorenz%
\thanks{Center for Industrial Mathematics, University of Bremen, Bibliotheksstrasse 5, 28359 Bremen, Germany}
\and
J.C.~Rabelo%
\thanks{Department of Mathematics, Federal Univ.\,of Piaui,
        64049-550 Teresina, Brazil}
\and
M.\,Winkler%
\thanks{Center for Industrial Mathematics, University of Bremen, Bibliotheksstrasse 5, 28359 Bremen, Germany}
}
\date{\small \today}

\maketitle \vspace{-0.8cm}

\begin{abstract}
In these notes we propose and analyze an inertial type method for obtaining
stable approximate solutions to nonlinear ill-posed operator equations. The
method is based on the Levenberg-Marquardt (LM) iteration.
The main obtained results are: monotonicity and convergence for exact data,
stability and semi-convergence for noisy data. Regarding numerical experiments
we consider: i) a parameter identification problem in elliptic PDEs, ii) a
parameter identification problem in machine learning; the computational
efficiency of the proposed method is compared with canonical implementations
of the LM method.
\end{abstract}

{\let\thefootnote\relax\footnotetext{Emails:
\href{mailto:acgleitao@gmail.com}{\tt acgleitao@gmail.com}, \
\href{mailto:d.lorenz@uni-bremen.de}
     {\tt d.lorenz@uni-bremen.de}, \
\href{mailto:joelrabelo@ufpi.edu.br}{\tt joelrabelo@ufpi.edu.br},

\hskip0.3cm \href{mailto:maxwin@uni-bremen.de}{\tt maxwin@uni-bremen.de}.}
}

\noindent {\small {\bf Keywords.}
Ill-posed problems; Nonlinear equations; Two-point methods; Inertial methods; Levenberg Marquardt method.}
\bigskip

\noindent {\small {\bf AMS Classification:} 65J20, 47J06.}
\bigskip

\section{Introduction} \label{sec:intro}


In a standard inverse problem scenario \cite{Bau87,EngHanNeu96,Kir96}, consider
Hilbert spaces $X$ and $Y$ and contemplate the challenge of deducing an unknown
quantity $x \in X$ from provided data $y \in Y$.
In other words, the task is to identify an unknown quantity of interest
$x$ (which cannot be directly accessed) relying on information derived
from a set of measured data $y$.

An essential aspect to note is that, in real-world applications, the
precise data $y \in Y$ is not accessible. Instead, only approximate
measured data $y^\delta \in Y$ is at our disposal, meeting the criteria of
\begin{equation}\label{eq:noisy-i}
    \norm{ y^\delta - y } \ \le \ \delta \, .
\end{equation}
Here, $\delta > 0$ represents the level of noise and we assume that $\delta$ (or an estimate thereof) is known. The available
noisy data $y^\delta \in Y$ are obtained by indirect measurements of
$x \in X$, this process being represented by the model
\begin{equation}\label{eq:inv-probl}
    F(x) \ = \ y^\delta ,
\end{equation}
where $F: X \to Y$, is a nonlinear, Fr\'echet differentiable, ill-posed
operator.

\subsection{State of the art}
\paragraph{The Levenberg-Marquardt (LM) method}

We recall a family of implicit iterative type methods for obtaining stable
approximate solutions to nonlinear ill-posed type operator equations as in
\eqref{eq:inv-probl}. The Levenberg-Marquardt (LM) type methods are
defined by
\begin{subequations} \label{def:LM}
\begin{equation}
x_{k+1}^\delta \ := {\rm argmin}_x \ \big\{
\norm{F(x_k^\delta) + F'(x_k^\delta)(x - x_k^\delta) - y^\delta}^2
+ \lambda_k \norm{x - x_k^\delta}^2 \big\} \, ,\ k = 0, 1, \dots \, ,
\end{equation}
what corresponds to defining $x_{k+1}^\delta$ as the solution of the
optimality condition
\begin{equation}
\big( A^*_k A_k + \lambda_k I \big) \, (x - x_k^\delta) \ = \
A^*_k \, \big( y^\delta - F(x_k^\delta) \big) \, ,\ k = 0, 1, \dots
\end{equation}
\end{subequations}
where $A_k := F'(x_k^\delta): X \to Y$ is the Fr\'echet derivative of
$F$ evaluated at $x_k^\delta$ and $A^*_k: Y \to X$ is the adjoint operator
to $A_k$. Additionally, $(\la_k)$ is a positive sequence of Lagrange
multipliers. The iteration starts at a given initial guess $x_0 \in X$.

In the case of linear ill-posed operator equations (i.e. $F(x) = Ax$;
notice that \eqref{eq:inv-probl} becomes $A x = y^\delta$) the LM method
reduces to the iterated Tikhonov method \cite{HG98, BLS20} (or proximal
point method \cite{Mar70, Roc76}), which correspond to defining
$x_{k+1}^\delta := \argmin_x \big\{ \norm{ Ax - y^\delta}^2 +
\la_k \norm{x - x_k^\delta}^2 \big\}$. The parameters $\la_k$ are
appropriately chosen Lagrange multipliers \cite{BLS20}.

The literature on LM type methods for inverse problems is extensive,
exploring various aspects, including regularization properties
\cite{Eng87, GS00, KN08, KalNeuSch08}, convergence rates
\cite{HG98, Sch93a}, {\em a posteriori} strategies for choosing the
Lagrange multipliers \cite{BLS20}, a cyclic version of the LM
method \cite{CBL11}, among others.

\paragraph{Inertial iterative methods}

Inertial iterative methods have been introduced by Polyak in~\cite{Pol64}
for the minimization of a smooth convex function $f$. The algorithm is
written as a two step method
\begin{align*}
w_k & = x_k + \alpha_k(x_k - x_{k-1})\\
x_{k+1} & = w_k - \lambda_k \nabla f(x_k) 
\end{align*}
where $\alpha_k$ is an extrapolation between $0$ and $1$ and $\lambda_k$
is a stepsize. The method is called the heavy-ball method as the extrapolation
can be motivated by a discretization of the dynamical system
$\ddot x(t) + \gamma \dot x(t) = -\nabla f(x(t))$ which models the dynamics
of a mass with friction driven by a potential $f$. The method has also
been extended to monotone operators, e.g. by Alvarez and Attouch
in~\cite{AA01} for the proximal point method and by Moudafi and Oliny
in~\cite{MO03} for the forward-backward method.

The heavy ball method achieves the optimal lower complexity bounds for
first order methods for smooth strongly convex functions~\cite{Nes04}.
For merely smooth function, a simple modification proposed by Nesterov
in~\cite{Nes83} achieves the lower complexity bounds also in this case.
The method reads as
\begin{align*}
w_k & = x_k + \alpha_k(x_k - x_{k-1})\\
x_{k+1} & = w_k - \lambda_k \nabla f(w_k) 
\end{align*}
and the only difference to the heavy ball method is that the gradient is
also evaluated at the extrapolated point. The performance relies on
a clever choice of the extrapolation sequence $\alpha_{k}$ such that it
approaches $1$ not too fast and not too slow. The method has been
extended to the forward backward case for convex optimization by Beck
and Teboulle~\cite{BeTe09} and further to monotone inclusions by Lorenz
and Pock~\cite{LP15}. Su, Boyd and Cand\'es~\cite{SBC16} related
Nesterov's method to the dynamical system
$\ddot x(t) + \tfrac{\alpha}{t}\dot x(t) = -\nabla f(x(t))$ which is
is similar to the heavy ball method but the damping $\alpha/t$ vanishes
asymptotically. The viewpoint of continuous dynamics was further
elaborated by Alvarez, Attouch, Bolte and Redont~\cite{AABR02} where
the authors proposed to analyze
\begin{align*}
\ddot x(t) + (\alpha I + \beta\nabla^2 f(x(t)))\ddot x(t) = -\nabla f(x(t))
\end{align*}
which they called \emph{dynamic inertial Newton system} (DIN). After time
disretization, this leads to an inertial Levenberg-Marquardt method similar
to the one we consider in this paper, but~\cite{AABR02} only analyzed the
continuous time system. Attouch, Peypoquet and Redont~\cite{APR16} combined
the DIN method with vanishing damping
\begin{align*}
\ddot x(t) + (\tfrac{\alpha}{t} I + \beta\nabla^2 f(x(t)))\ddot x(t) = -\nabla f(x(t)).
\end{align*}

\subsection{Contribution}

In these notes, we introduce and analyze an implicit inertial iteration,
here called {\it inertial Levenberg-Marquardt method} (inLM), which can
be construed as an extension of the LM method.
Our approach is connected to the inertial method put forth in 2001 by
Alvarez and Attouch \cite{AA01}.
In the case of linear ill-posed operator equations an approch analog to
the one addressed in this manuscript (namely the {\em inertial iterated
Tikhonov method}) is treated in \cite{RLM24}.

We suggest this implicit inertial method as a practical alternative for
computing robust approximate solutions to the ill-posed operator equation
\eqref{eq:inv-probl} and explore its numerical effectiveness.

The method under consideration consists in choosing appropriate
non-negative sequences $(\alpha_k)$, $(\la_k)$ and defining (at
each iterative step) the extrapolation
$w_k^\delta := x_k^\delta + \alpha_k (x_k^\delta - x_{k-1}^\delta)$;
the next iterate $x_{k+1}$ is than defined by
\begin{equation} \label{eq:inLM-step}
x_{k+1}^\delta \ := {\rm argmin}_x \ \big\{
\norm{F(w_k^\delta) + F'(w_k^\delta)(x - w_k^\delta) - y^\delta}^2
+ \lambda_k \norm{x - w_k^\delta}^2 \big\} \, ,\ k = 0, 1, \dots
\end{equation}
where $x_{-1} = x_0\in X$ are given. For obvious reasons we refer to this
implicit two-point method as {\em inertial Levenberg-Marquardt method} (inLM).
\medskip

\subsection{Outline}
The outline of the manuscript is as follows:
In Section~\ref{sec:iteration} we introduce and analyze the inLM method.
We prove a monotonicity result as well as convergence for exact data in
Section~\ref{ssec:2.2}, and discuss stability and semi-convergence
results in Section~\ref{ssec:2.3}.
In Section~\ref{sec:numerics} the inLM method is tested for two ill-posed
problems: i) a parameter identification problem in elliptic PDEs, ii) a
parameter identification problem in machine learning.
Section~\ref{sec:conclusions} is devoted to final remarks and conclusions.

\section{The inertial Levenberg-Marquardt method} \label{sec:iteration}

In this section we introduce and analyze the inLM method considered in
these notes. In Section~\ref{ssec:2.1} the inLM method is presented and
preliminary results are derived. A convergence result (in the exact data
case) is proven in Section~\ref{ssec:2.2}. Stability and semi-convergence
results (in the noisy data case) are proven in Section~\ref{ssec:2.3}.

This is the set of main assumptions that we impose on the operator $F$ and the data $y$:

\begin{enumerate}[({A}1)]
\item 
  The operator $F: X \to Y$ is continuously Fr\'echet differentiable.
  Moreover, there exist constants $C > 0$ and $\rho > 0$ such that
  $\norm{F'(x)} \leq C$, for all $x \in B_\rho(x_0)$.

\item 
  The operator $F$ satisfies the weak Tangential Cone Condition (wTCC) at
  $B_\rho(x_0)$ for some $\eta \in [0,1)$, i.e.
  \\[1ex]
  \centerline{$\norm{F(x') - F(x) - F'(x)(x' - x)} \ \leq \ \eta
    \, \norm{F(x')-F(x)} \, , \quad \forall \ x, \, x' \in B_\rho(x_0)$.}
\item 
  There exists $x^\star \in B_{\rho/2}(x_0)$ such that $F(x^\star) = y$,
  where $y \in Rg(F)$ is the exact data.
\item   There exists $q \in (\eta,1)$ such that $\la_k > q \, C^2
  \, (1 - q)^{-1}$, for $k = 0,1, \dots$
\end{enumerate}

\subsection{Description of method} \label{ssec:2.1}

To emphasize the fundamental principles that underlie the definition
of our method, we commence the discussion by examining the scenario
with exact data $y^\delta = y$, i.e. $\delta = 0$.
Denoting the current iterate by $x_k \in X$, for $k \geq 0$, the step
of the proposed inLM method consists in two parts: (i) compute $w_k \in X$,
according to
\begin{subequations} \label{def:inLM}
\begin{equation} \label{def:inLM-w}
w_k \ := \ x_k + \alpha_k \, (x_k - x_{k-1}) \, ;
\end{equation}
(ii) define the subsequent iterate $x_{k+1} \in X$ as the solution of
\begin{equation} \label{def:inLM-x}
\big( A^*_k A_k + \lambda_k I \big) \, (x - w_k) \ = \
A^*_k \, \big( y - F(w_k) \big) \, ,
\end{equation}
\end{subequations}
for $k = 0, 1, \dots$, were $A_k := F'(w_k): X \to Y$ is the Fr\'echet
derivative of $F$ at $w_k$ and $A^*_k: Y \to X$ is the adjoint operator
to $A_k$.
Here $x_0 \in X$ plays the role of an initial guess and $x_{-1} := x_0$.
Moreover, $(\alpha_k) \in [0,\alpha)$ for some $\alpha \in (0,1)$, and
$(\la_k) \in \R^+$ are given sequences.
Notice that, if $\alpha_k \equiv 0$ then $w_k = x_k$ in \eqref{def:inLM-w};
thus, \eqref{def:inLM-x} reduces to the standard LM iteration for exact
data, i.e. $x_{k+1}$ is defined as the solution of
$\big( A^*_k A_k + \lambda_k I \big) (x - x_k) \, = \, A^*_k \,
\big( y - F(x_k) \big)$, for $k = 0, 1, \dots$.

The careful reader observes that \eqref{def:inLM-x} is essentially the LM
iterative step \eqref{def:LM} starting from the extrapolation point $w_k$
instead of $x_k$. Notice that \eqref{def:inLM-x} is equivalent to computing
\begin{equation} \label{def:sk}
s_k \ := \ \big( A^*_k A_k + \lambda_k I \big)^{-1} \,
         A^*_k \, \big( y - F(w_k) \big)
\quad \text{ and setting } \quad
x_{k+1} := w_k + s_k
\end{equation}
($s_k$ is the iterative step of the inLM method).
It is straightforward to see that the first equation in \eqref{def:sk}
is equivalent to $G_k \, s_k = A^*_k (y - F(w_k))$, where
$G_k := (A^*_k A_k + \la_k I): X \to X$ is a positive definite operator
with spectrum contained in the interval $[\la_k , \la_k + \norm{A}^2]$.
Consequently, since $\la_k > 0$, the iterate $x_{k+1}$ is uniquely defined
by \eqref{def:inLM-x}.

We present the inLM method in algorithmic form in Algorithm~\ref{alg:init-exact}.

\begin{algorithm}[h!]
\begin{center}
\fbox{\parbox{13.5cm}{
$[0]$ choose an initial guess $x_0 \in X$; \ 
      $w_0 = x_0$; \ $k := 0$;
\medskip

$[1]$ choose $\alpha \in [0,1)$ \ and \ $(\la_k)_{k\geq0} \in \R^+$;
\medskip

$[2]$ {\bf for \ $k = 0, 1, \dots$ \ do}
\smallskip

\ \ \ \ \ \ \ {\bf if \ $\big( \norm{F(w_k) - y} > 0 \big)$ \ then}
\smallskip

\ \ \ \ \ \ \ \ \ \ $[2.1]$ $A_k := F'(w_k)$; \
          compute \,$s_k \in X$ as the solution of
\smallskip

\centerline{$\big( A^*_k A_k + \lambda_k I \big) \, s_k
\ = \ A^*_k \, \big( y - F(w_k) \big)$;}
\smallskip

\ \ \ \ \ \ \ \ \ \ $[2.2]$ $x_{k+1} := w_k + s_k$;
\smallskip

\ \ \ \ \ \ \ \ \ \ $[2.3]$ choose \,$\alpha_{k+1} \in [0,\alpha]$; \
          $w_{k+1} := x_{k+1} + \alpha_{k+1} (x_{k+1} - x_k)$;
\smallskip

\ \ \ \ \ \ \ {\bf else}
\smallskip

\ \ \ \ \ \ \ \ \ \ $[2.4]$ $s_k := 0$; \ $x_{k+1} := w_k$; \ {\bf break};
\smallskip

\ \ \ \ \ \ \ {\bf end if};
\smallskip

\ \ \ \ \,{\bf end for};
\smallskip
} }
\end{center} \vskip-0.5cm
\caption{Inertial Levenberg-Marquardt method in the exact data case.}
\label{alg:init-exact}
\end{algorithm}

\begin{remark}[Comments on Algorithm~\ref{alg:init-exact}] \label{rem:station}
This algorithm generates infinite sequences $(x_k)_{k\in\N}$ and
$(w_k)_{k\in\N}$ if and only if $F(w_k) \not= y$, for all $k \in \N$.
Indeed, if $F(w_{k_0}) = y$ for some $k_0 \in \N$ in Algorithm~%
\ref{alg:init-exact}, the iteration stops at Step~[2.4] after computing
\,$x_0, \dots, x_{k_0+1}$ \,and \,$w_0, \dots, w_{k_0}$.

The operators $A_k := F'(w_k) \in \mathcal{L}(X,Y)$ and $A_k^* \in
\mathcal{L}(Y,X)$ do not have to be explicitely known (see the inverse
problem in Section~\ref{ssec:num-nnp}).
The linear system in Step~[2.1] can be solved, e.g., using the Conjugate
Gradient (CG) mehthod; in this case it is enough to know only the action of
$A_k$ and $A_k^*$.
\end{remark}

In the remaining of this subsection we establish preliminary
properties of the sequences $(x_k)$, $(w_k)$ generated by Algorithm~%
\ref{alg:init-exact}. The first result, stated in Lemma~\ref{lemma:aux},
follows directly from the definition of $w_k$ in \eqref{def:inLM-w} (see
also Step~[2.4] of Algorithm~\ref{alg:init-exact}), while in Lemma~%
\ref{lemma:dk-and-ineq} some useful inequalities are derived.

\begin{lemma} \label{lemma:aux}
Let (A1) hold and $(x_k)$, $(w_k)$ be sequences generated by Algorithm~%
\ref{alg:init-exact}. Thus
\begin{equation} \label{eq:w}
\norm{w_k - x}^2 \, = \, (1+\alpha_k) \norm{x_k - x}^2 - \alpha_k
\norm{x_{k-1} - x}^2 + \alpha_k (1+\alpha_k) \norm{x_k - x_{k-1}}^2 ,
\ k \geq 1
\end{equation}
for $x \in X$.
\end{lemma}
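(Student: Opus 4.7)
The plan is to derive the identity by a direct algebraic expansion, using nothing beyond the definition of $w_k$ in \eqref{def:inLM-w} and the parallelogram/polarization identity in the Hilbert space $X$. Note that assumption (A1) is not actually needed for the identity itself; the statement is a purely algebraic fact about any three points and an extrapolation weight.

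Concretely, first I would write $w_k - x = (x_k - x) + \alpha_k(x_k - x_{k-1})$ using \eqref{def:inLM-w}, and expand the squared norm as
\begin{equation*}
\norm{w_k - x}^2 \;=\; \norm{x_k - x}^2 + 2\alpha_k \langle x_k - x,\, x_k - x_{k-1}\rangle + \alpha_k^2 \norm{x_k - x_{k-1}}^2.
\end{equation*}
The only nontrivial piece is the inner product, which I would handle via the polarization identity applied to $a := x_k - x$ and $b := x_{k-1} - x$. Since $x_k - x_{k-1} = a - b$, a short calculation (using $2\langle a,b\rangle = \norm{a}^2 + \norm{b}^2 - \norm{a-b}^2$) gives
\begin{equation*}
2\langle x_k - x,\, x_k - x_{k-1}\rangle \;=\; \norm{x_k - x}^2 - \norm{x_{k-1} - x}^2 + \norm{x_k - x_{k-1}}^2.
\end{equation*}

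Substituting this back and collecting the coefficients of $\norm{x_k - x}^2$, $\norm{x_{k-1} - x}^2$ and $\norm{x_k - x_{k-1}}^2$ yields exactly the stated identity~\eqref{eq:w}. There is no genuine obstacle here; the identity is a standard inertial-analysis bookkeeping formula, and the only care needed is to track signs correctly when applying polarization.
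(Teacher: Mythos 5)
Your computation is correct and complete: expanding $\norm{w_k-x}^2$ via \eqref{def:inLM-w} and eliminating the cross term with the polarization identity yields exactly \eqref{eq:w}, and you are right that (A1) plays no role in this purely algebraic identity. The paper itself only cites \cite[Lemma~2.2]{RLM24} for this fact, and the argument there is the same direct expansion, so your proof matches the intended one.
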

\begin{proof}
See \cite[Lemma~2.2]{RLM24} for a complete proof.
\end{proof}

\begin{lemma} \label{lemma:dk-and-ineq}
Let (A1) hold and $(x_k)$, $(w_k)$ be sequences generated by Algorithm~%
\ref{alg:init-exact}.
Define $A_k := F'(w_k)$ and $D_k := F(w_k) + A_k (x_{k+1}-w_k) - y$. The
following assertions hold true:
\\[1ex]
a) $D_k \, = \, \la_k (A_k A^*_k + \la_k I)^{-1} (F(w_k) - y)$;
\\[1ex]
b) $w_k - x_{k+1} \ = \ \la_k^{-1} A^*_k \, [F(w_k) + A_k
(x_{k+1} - w_k) - y]$;
\\[1ex]
c) Additionally, if (A4) holds, we have
\,$q \norm{F(w_k) - y} \, \leq \, \norm{D_k} \, \leq \, \norm{F(w_k)-y}$;
\\[1ex]
d) Additionally, if (A2) holds and $w_k$, $x_{k+1} \in B_\rho(x_0)$, we have
\\[1ex]
\centerline{$(1-\eta) \norm{F(x_{k+1}) - y} \ \leq \ (1+\eta) \norm{F(w_k)-y}$.}
\end{lemma}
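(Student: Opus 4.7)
\medskip
\noindent\textbf{Proof plan.}
My plan is to prove the four items in order, since each subsequent item will rely on the identities established just before it. The central tool will be the normal equation
\[
(A_k^* A_k + \la_k I)(x_{k+1} - w_k) \ = \ A_k^* \, (y - F(w_k))
\]
coming from Step~[2.1] of Algorithm~\ref{alg:init-exact}, together with the well-known commutation $A_k (A_k^*A_k + \la_k I)^{-1} = (A_k A_k^* + \la_k I)^{-1} A_k$ and the resolvent identity $A_k A_k^* (A_k A_k^* + \la_k I)^{-1} = I - \la_k (A_k A_k^* + \la_k I)^{-1}$.

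For item (a), I plan to substitute the expression $x_{k+1} - w_k = (A_k^* A_k + \la_k I)^{-1} A_k^* (y - F(w_k))$ into the definition of $D_k$, move $A_k$ to the outside via the commutation formula, and then apply the resolvent identity. After a brief cancellation this yields the stated formula for $D_k$.

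Item (b) is a direct algebraic rearrangement: starting from the normal equation, I isolate $\la_k(x_{k+1}-w_k)$ on one side, factor $A_k^*$ on the right-hand side, and identify the bracket with $-D_k$. For item (c), I will use the representation from (a) and the fact that $\la_k (A_k A_k^* + \la_k I)^{-1}$ is self-adjoint and positive with spectrum contained in $[\la_k(\la_k+\norm{A_k}^2)^{-1},\,1]$; the upper bound is immediate, while the lower bound will follow from the spectral estimate combined with assumption (A1) ($\norm{A_k}\le C$) and assumption (A4), which was chosen precisely so that $\la_k(\la_k + C^2)^{-1} \ge q$.

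Item (d) is the only place where the wTCC enters. I will rewrite the defect
\[
F(x_{k+1}) - y - D_k \ = \ F(x_{k+1}) - F(w_k) - A_k(x_{k+1}-w_k)
\]
and apply (A2) (which is legitimate because both $w_k$ and $x_{k+1}$ lie in $B_\rho(x_0)$), obtaining $\norm{F(x_{k+1})-y-D_k} \le \eta\,\norm{F(x_{k+1})-F(w_k)}$. A triangle inequality on the right-hand side (splitting $F(x_{k+1})-F(w_k)$ via $y$) and on the left-hand side (introducing $D_k$) leads to
\[
(1-\eta)\norm{F(x_{k+1})-y} \ \le \ \norm{D_k} + \eta\,\norm{F(w_k)-y},
\]
and the upper bound $\norm{D_k}\le\norm{F(w_k)-y}$ from (c) concludes the inequality. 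The only mildly delicate point I anticipate is keeping track of signs in (a) when moving between $F(w_k)-y$ and $y-F(w_k)$; everything else is a direct computation.
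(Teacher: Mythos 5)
Your proposal is correct and follows essentially the same route as the paper: both derive (a) from the normal equation (you via the commutation and resolvent identities, the paper by applying $A_k$ and adding and subtracting $\la_k(F(w_k)-y)$, which amounts to the same computation), both obtain (c) from the spectral bound $\la_k(\la_k+C^2)^{-1}\ge q$ guaranteed by (A4), and both prove (d) by the identity $F(x_{k+1})-y-D_k = F(x_{k+1})-F(w_k)-A_k(x_{k+1}-w_k)$ followed by (A2), a triangle inequality, and the upper bound from (c). No gaps.
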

\begin{proof}
{\bf Assertions (a) and (b):} 
From Steps~[2.1] and~[2.2] of Algorithm~\ref{alg:init-exact} follow
\begin{equation} \label{eq:stepLM}
A^*_k \, [F(w_k) + A_k (x_{k+1}-w_k) - y] + \la_k (x_{k+1} - w_k) \ = \ 0
\end{equation}
(see also \eqref{eq:inLM-step}).
Consequently, $A_k A^*_k \, D_k  + \la_k A_k (x_{k+1} - w_k) = 0$, from
where we obtain
$$
A_k A^*_k \, D_k  + \la_k A_k (x_{k+1} - w_k) + \la_k (F(w_k) - y)
- \la_k (F(w_k) - y) \ = \ 0 \, .
$$
Thus, $(A_k A^*_k + \la_k I) \, D_k  = \la_k (F(w_k) - y)$ and
Assertion~(a) follows.

Assertion~(b) is an immediate consequence of \eqref{eq:stepLM}.
\smallskip

\noindent {\bf Assertion (c):}
If (A1) and (A4) hold, we conclude from Assertion~(a) together with the
fact $\sigma(A_kA^*_k + \la_k I) \subset [\la_k, \la_k + C^2]$ that
$$
\frac{\la_k}{\la_k + C^2} \norm{F(w_k) - y} \, \leq \, \norm{D_k}
\, = \,
\norm{\la_k (A_k A^*_k + \la_k I)^{-1} (F(w_k) - y)}
\, \leq \, \norm{F(w_k) - y} \, .
$$
From this inequality Assertion~(c) follows.
\smallskip

\noindent {\bf Assertion (d):}
From the definition of $D_k$ follows
$$
\norm{F(x_{k+1}) - y} \ \leq \
\norm{F(x_{k+1}) - F(w_k) - A_k (x_{k+1}-w_k)} + \norm{D_k} \, .
$$
Thus, it follows from (A2) and Assertion~(c)
\begin{eqnarray*}
\norm{F(x_{k+1}) - y}
& \leq & \eta \, \norm{F(x_{k+1}) - F(w_k)} + \norm{F(w_k) - y} \\
& \leq & \eta \, \norm{F(x_{k+1}) - y} + (1 + \eta) \norm{F(w_k) - y}) ,
\end{eqnarray*}
proving Assertion~(d).
\end{proof}

\begin{assump} \label{ass:alpha}
Given $\alpha\in[0,1)$ and a convergent series $\sum_k \theta_k$ of
nonnegative terms, let 
$$
\alpha_k :=
        \left\{ \begin{array}{cl}
          \min \left\{ \dfrac{\theta_k}{\norm{x_k-x_{k-1}}^2} ,
          \dfrac{\min\{ \theta_k ,\ \rho-\norm{x_k-x_0}\}}
          {\norm{x_k-x_{k-1}}} ,
          \alpha \right\}
                & , \ {\rm if } \ \norm{x_k - x_{k-1}} > 0 \\[2ex]
          0     & , \ {\rm otherwise}
        \end{array} \right.
\, , \ k \geq 1 \, . 
$$
\end{assump}

For simplicity of the presentation we assume, for the rest of this section,
that $\theta_k = 1/k^2$.

\begin{remark} \label{rem:alpha}
Should the sequence $(\alpha_k)$ of inertial parameters be chosen in
accordance with Assumption~\ref{ass:alpha}, two immediate consequences
ensue, namely:
\\[1ex]
a) \ If $x_{k-1}$, $x_k \in B_\rho(x_0)$, then $w_k \in B_\rho(x_0)$
as well. Indeed, from \eqref{def:inLM-w} follows
\\[1ex]
\centerline{$
\norm{w_k - x_0} \leq \norm{x_k - x_0} + \alpha_k \norm{x_k - x_{k-1}} <
\norm{x_k - x_0} + \dfrac{\rho-\norm{x_k-x_0}}{\norm{x_k-x_{k-1}}}
\norm{x_k - x_{k-1}} = \rho$}
\\[1ex]
(if $x_{k-1} = x_k$ holds, then \eqref{def:inLM-w} implies $w_k = x_k
\in B_\rho(x_0)$).
\medskip

\noindent
b) \ $\sum_{k \geq 0} \alpha_k \norm{x_k - x_{k-1}}^2$ is summable since by
$\alpha_k\leq \theta_{k}/\norm{x_{k}-x_{k-1}}^{2}$ it holds that
$\alpha_k\norm{x_k-x_{k-1}}^{2}\leq \theta_{k}$ and $\sum_{k\geq 0}\theta_{k}$ is summable by assumption.
\end{remark}

In the next proposition we compare the squared distances $\norm{w_k-x^*}^2$
and $\norm{x_{k+1} - x^*}^2$, where $x^*$ is any solution of $F(x) = y$ inside
the ball $B_\rho(x_0)$.

\begin{propo} \label{prop:gain}
Let (A1) -- (A4) hold and $(x_k)$, $(w_k)$ be sequences generated by
Algorithm~\ref{alg:init-exact} (with $(\la_k)$ and $(\alpha_k)$ chosen
as in Steps~[1] and [2.3] respectively). If $w_k \in B_\rho(x_0)$ then
$$
\norm{w_k - x^*}^2 - \norm{x_{k+1} - x^*}^2 \ \geq \
\norm{w_k - x_{k+1}}^2 + 2 (q - \eta) \la_k^{-1} \norm{D_k}
\norm{F(w_k)-y} \, , \ k \geq 0
$$
for any $x^* \in B_\rho(x_0)$ solution of $F(x) = y$,
\end{propo}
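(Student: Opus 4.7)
The plan is to expand the squared distance difference and then bring in the two main tools already available: the representation of the step from Lemma~\ref{lemma:dk-and-ineq}(b) and the weak tangential cone condition (A2).

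First I would use the elementary Hilbert-space identity
$$
\norm{w_k - x^*}^2 - \norm{x_{k+1} - x^*}^2 \ = \ \norm{w_k - x_{k+1}}^2
+ 2\,\ipl w_k - x_{k+1},\, x_{k+1} - x^*\ipr \, ,
$$
so the whole problem reduces to bounding the cross term below by
$(q-\eta)\la_k^{-1}\norm{D_k}\norm{F(w_k)-y}$.

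Next I would rewrite the cross term using Lemma~\ref{lemma:dk-and-ineq}(b), i.e.\ $w_k - x_{k+1} = \la_k^{-1} A_k^*D_k$, to obtain
$$
\ipl w_k - x_{k+1},\, x_{k+1} - x^*\ipr \ = \ \la_k^{-1}\,\ipl D_k,\, A_k(x_{k+1}-x^*)\ipr .
$$
The key algebraic step is to split $A_k(x_{k+1}-x^*) = A_k(x_{k+1}-w_k) + A_k(w_k-x^*)$. By the very definition of $D_k$ one has $A_k(x_{k+1}-w_k) = D_k - (F(w_k)-y)$, and (A2) applied to the pair $w_k, x^* \in B_\rho(x_0)$ (which is the reason $w_k\in B_\rho(x_0)$ is part of the hypothesis) gives
$$
\norm{\,A_k(w_k - x^*) - (F(w_k)-y)\,} \ \le\ \eta\,\norm{F(w_k)-y}.
$$
Adding these, $A_k(x_{k+1}-x^*) = D_k + E$ with $\norm{E}\le \eta\norm{F(w_k)-y}$.

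Therefore $\ipl D_k, A_k(x_{k+1}-x^*)\ipr = \norm{D_k}^2 + \ipl D_k, E\ipr \ge \norm{D_k}^2 - \eta\norm{D_k}\norm{F(w_k)-y}$. Finally, using Lemma~\ref{lemma:dk-and-ineq}(c) in the form $\norm{D_k}\ge q\norm{F(w_k)-y}$, one has $\norm{D_k}^2 \ge q\norm{D_k}\norm{F(w_k)-y}$, which combined with the previous inequality yields
$$
\ipl D_k,\, A_k(x_{k+1}-x^*)\ipr \ \ge\ (q-\eta)\,\norm{D_k}\,\norm{F(w_k)-y}.
$$
Multiplying by $2\la_k^{-1}$ and substituting back gives the stated inequality. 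The only delicate point is the signed manipulation of the error term $E$ from the wTCC; everything else is essentially bookkeeping with the optimality condition and Lemma~\ref{lemma:dk-and-ineq}. Notice that $x_{k+1}\in B_\rho(x_0)$ is \emph{not} needed here — (A2) is only invoked on the pair $(w_k, x^*)$.
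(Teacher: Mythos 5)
Your proof is correct and follows essentially the same route as the paper: expand the squared-distance difference, use Lemma~\ref{lemma:dk-and-ineq}(b) to turn the cross term into $\la_k^{-1}\ipl D_k, A_k(x_{k+1}-x^*)\ipr$, apply (A2) to the pair $(w_k,x^*)$, and finish with the lower bound $\norm{D_k}\geq q\norm{F(w_k)-y}$ from Lemma~\ref{lemma:dk-and-ineq}(c). The only cosmetic difference is that you make the decomposition $A_k(x_{k+1}-x^*)=D_k+E$ explicit where the paper inserts $\pm w_k,\pm F(w_k),\pm y$, and your remark that $x_{k+1}\in B_\rho(x_0)$ is not needed is consistent with the paper's hypothesis.
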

\begin{proof}
From Lemma~\ref{lemma:dk-and-ineq}~(b) follows
\begin{eqnarray*}
\|w_k - x\|^2 & \!\!\!\!\!\!\! - & \!\!\!\!\!\!\! \|x_{k+1} - x\|^2
\ = \ \|w_k - x_{k+1}\|^2 + 2\ipl w_k - x_{k+1} , x_{k+1} - x \ipr
\\
& = & \| w_k - x_{k+1} \|^2 + 2 \la_k^{-1}
  \ipl A^*_k [F(w_k) + A_k (x_{k+1} - w_k) - y] , x_{k+1} - x \ipr
\\
& = & \| w_k - x_{k+1} \|^2 + 2 \la_k^{-1}
  \ipl D_k , A_k (x_{k+1} - x) \ipr
\\
& = & \| w_k - x_{k+1} \|^2 + 2 \la_k^{-1}
  \ipl D_k , A_k \, (x_{k+1} - x \pm w_k) \pm F(w_k) \pm y \ipr
\\
& = & \| w_k - x_{k+1} \|^2 + 2 \la_k^{-1} \big[
  \norm{D_k}^2 +
  \ipl D_k , A_k \, (w_k - x) - F(w_k) + y ) \ipr \big] \, ,
\end{eqnarray*}
for $x \in X$ and $k \geq 0$. From this equation with $x = x^*$,
(A2) and Lemma~\ref{lemma:dk-and-ineq}~(c) follows
\begin{eqnarray*}
\| w_k - x \|^2  - \| x_{k+1} - x\|^2
& \!\!\!\! = & \!\!\!\!
    \| w_k - x_{k+1} \|^2 + 2 \la_k^{-1} \big[ \norm{D_k}^2 +
    \ipl D_k , F(x^*) - F(w_k) - A_k \, (x^* - w_k)  \ipr \big]
\\
& \!\!\!\! \geq & \!\!\!\!
    \| w_k - x_{k+1} \|^2 + 2 \la_k^{-1} \norm{D_k} \Big[
    \norm{D_k} - \eta \norm{y - F(w_k)} \Big]
\\
& \!\!\!\! \geq & \!\!\!\!
    \| w_k - x_{k+1} \|^2 + 2 \la_k^{-1} \norm{D_k} \,
    (q - \eta) \, \norm{y - F(w_k)} \, ,
\end{eqnarray*}
completing the proof.
\end{proof}

In the following proposition, we examine the boundedness of the
sequences $(x_k)$ and $(w_k)$ generated by Algorithm~\ref{alg:init-exact}.

\begin{propo} \label{prop:bound}
Let (A1) -- (A4) hold and $(x_k)$, $(w_k)$ be sequences generated by
Algorithm~\ref{alg:init-exact} (with $(\la_k)$ and $(\alpha_k)$ chosen
as in Steps~[1] and [2.3] respectively). If $(\alpha_k)$ satisfies
Assumption~\ref{ass:alpha} then $(x_k)$ and $(w_k)$ are contained
in $B_\rho(x_0)$.
\end{propo}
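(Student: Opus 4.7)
I will argue by induction on $k$, showing simultaneously that $x_k\in B_\rho(x_0)$ and (as a consequence) that $w_k\in B_\rho(x_0)$ for every $k\geq 0$. The base case is immediate since $x_{-1}=x_0$ and $w_0=x_0$ all sit at the center of the ball. For the inductive step, assuming $x_0,\dots,x_k\in B_\rho(x_0)$, Remark~\ref{rem:alpha}(a) immediately yields $w_k\in B_\rho(x_0)$, so the task reduces to proving $x_{k+1}\in B_\rho(x_0)$.

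The driving idea is to track the squared distance $\phi_k:=\|x_k-x^\star\|^2$ to a solution $x^\star$ provided by (A3). Since $w_k\in B_\rho(x_0)$, Proposition~\ref{prop:gain} (with $x^\star$ in place of $x^*$) gives $\phi_{k+1}\leq \|w_k-x^\star\|^2$; combining this with the identity of Lemma~\ref{lemma:aux} applied at $x=x^\star$ yields
\begin{equation*}
\phi_{k+1}-\phi_k \ \leq \ \alpha_k(\phi_k-\phi_{k-1}) + \alpha_k(1+\alpha_k)\|x_k-x_{k-1}\|^2.
\end{equation*}
Setting $a_k:=[\phi_k-\phi_{k-1}]_+$ and using $\alpha_k\leq\alpha<1$ together with $\alpha_k(1+\alpha_k)\leq 2\alpha_k$, I obtain the one-step recursion $a_{k+1}\leq \alpha\,a_k+2\alpha_k\|x_k-x_{k-1}\|^2$. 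Unfolding this geometrically and summing against the finite series $\sum_k\alpha_k\|x_k-x_{k-1}\|^2$ furnished by Remark~\ref{rem:alpha}(b) (itself a consequence of the bookkeeping $\alpha_k\|x_k-x_{k-1}\|^2\leq\theta_k$ built into Assumption~\ref{ass:alpha}), I conclude $\sum_{k\geq 1}a_k<\infty$, and therefore $\phi_{k+1}\leq \phi_0+\sum_{j=1}^{k+1}a_j$ is bounded by an explicit constant depending only on $\alpha$ and $\sum_k\theta_k$.

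To close the induction, I use the triangle inequality $\|x_{k+1}-x_0\|\leq \|x_{k+1}-x^\star\|+\|x^\star-x_0\|$, bound the first term via the estimate on $\phi_{k+1}$ just obtained, and bound the second by $\rho/2$ thanks to (A3). The delicate point, which I expect to be the main obstacle, is quantitative: the constant coming out of the Lyapunov argument must combine with the $\rho/2$ from (A3) to stay strictly below $\rho$. This is why the careful choice of $\alpha_k$ in Assumption~\ref{ass:alpha}, which forces $\alpha_k\|x_k-x_{k-1}\|^2\leq\theta_k$ (rather than a coarse bound in terms of $\alpha$ and $\|x_k-x_{k-1}\|^2$ alone), is essential; any looser control on the cumulative perturbations would fail to keep the iterates inside $B_\rho(x_0)$.
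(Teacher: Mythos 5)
Your overall scaffolding matches the paper's: both proofs proceed by induction, both get $w_k\in B_\rho(x_0)$ from $x_{k-1},x_k\in B_\rho(x_0)$ via Remark~\ref{rem:alpha}(a), and both then invoke Proposition~\ref{prop:gain} to control $x_{k+1}$. Where you diverge is in how you control $\|x_{k+1}-x^\star\|$: the paper closes the inductive step with the single one-step monotonicity estimate $\|x_{l+1}-x^\star\|\leq\|w_l-x^\star\|$ supplied by Proposition~\ref{prop:gain} together with (A3), whereas you run the full Alvarez--Attouch quasi-Fej\'er argument on $a_k=[\phi_k-\phi_{k-1}]_+$ (essentially reproducing the machinery the paper defers to Proposition~\ref{prop:series}) to obtain a cumulative bound $\phi_{k+1}\leq\phi_0+\tfrac{2}{1-\alpha}\sum_j\theta_j$. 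The recursion itself, and the summation $\sum_k a_k\leq\tfrac{1}{1-\alpha}\bigl(a_1+2\sum_k\theta_k\bigr)$, are correctly derived.

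The problem is the step you yourself flag as ``the main obstacle'': it is not merely delicate, it fails. Your bound gives
$\|x_{k+1}-x_0\|\leq\|x^\star-x_0\|+\sqrt{\phi_0+\tfrac{2}{1-\alpha}\sum_j\theta_j}<\tfrac{\rho}{2}+\sqrt{\tfrac{\rho^2}{4}+\tfrac{2}{1-\alpha}\sum_j\theta_j}$,
and the right-hand side is strictly larger than $\rho$ whenever $\sum_j\theta_j>0$. Assumption~\ref{ass:alpha} does not rescue this: the paper fixes $\theta_k=1/k^2$, so $\sum_j\theta_j=\pi^2/6$ is a constant with no relation to $\rho$, and the bound $\alpha_k\|x_k-x_{k-1}\|^2\leq\theta_k$ (your Remark~\ref{rem:alpha}(b) input) is exactly what your estimate already uses. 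So your claim that the ``careful choice of $\alpha_k$\,\dots\ is essential'' to make the constants work is not substantiated and, along the route you chose, cannot be: no additive perturbation of $\phi_0\leq\rho^2/4$ by a positive quantity independent of $\rho$ can land back inside $B_\rho(x_0)$ after the triangle inequality through $x^\star$. The induction therefore does not close as written. To repair it you would either need an extra smallness hypothesis tying $\sum_k\theta_k$ to $\rho$ and $\|x_0-x^\star\|$ (which the paper does not impose), or you should abandon the cumulative Lyapunov bound and argue as the paper does, deducing $x_{l+1}\in B_\rho(x_0)$ directly from $\|x_{l+1}-x^\star\|\leq\|w_l-x^\star\|$ at the current step.
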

\begin{proof}
We present here a proof by induction, with inductive step stated
as follows: \smallskip

\centerline{Assume that $(w_k)_{k=0}^{l-1}$, $(x_k)_{k=0}^l \in
B_\rho(x_0)$, \ and conclude that $w_l$, $x_{l+1} \in B_\rho(x_0)$.}
\smallskip

\noindent For $l=1$, it holds $w_0 = x_0 \in B_\rho(x_0)$ and
$x_1 := w_0 + s_0$, where $\big( A^*_0 A_0 + \lambda_0 I \big)
s_0 = A^*_0 [y - F(w_0)]$. Thus, from Proposition~\ref{prop:gain},
(A3), (A4) follows $\norm{w_0 - x^\star} \geq \norm{x_1 - x^\star}$,
hence $x_1 \in B_\rho(x_0)$.
Now, the fact $x_0, x_1 \in B_\rho(x_0)$ together with
Assumption~\ref{ass:alpha} imply $w_1 \in B_\rho(x_0)$ (see
Remark\,\ref{rem:alpha})).
\smallskip

\noindent For $l > 1$, the inductive assumption ensures $x_{l-1}$,
$x_l \in B_\rho(x_0)$. Consequently, Assumption~\ref{ass:alpha}
implies $w_l \in B_\rho(x_0)$. Thus, arguing with Proposition~%
\ref{prop:gain}, (A3), (A4) and the inductive assumption we obtain
$\norm{w_l - x^\star} \geq \norm{x_{l+1} - x^\star}$.
Therefore, $x_{l+1} \in B_\rho(x_0)$, concluding the proof.
\end{proof}

In the upcoming proposition we discuss the summability of three series
related to inLM, a crucial element for proving a convergence theorem
(see Theorem~\ref{th:conv}).

\begin{propo}\label{prop:series}
Assume that $\alpha_{k}$ fulfills Assumption~\ref{ass:alpha}, that
(A1)-(A4) are fulfilled and that $(x_k)$ and $(w_{k})$ are generated
by Algorithm~\ref{alg:init-exact}. Then it holds that the limit
$\norm{x_k - x^*}$ exist for all solutions $x^*$ and that
\begin{align*}
\sum_{k=0}^{\infty} \norm{x_{k+1} - w_{k}}^2 < \infty \, ,
\quad
\sum_{k=0}^{\infty} \lambda_k^{-1} \norm{F(w_k) - y}^2 < \infty
\quad \text{and} \quad
\sum_{k=0}^{\infty} \lambda_k^{-1} \norm{F(x_k) - y}^2 < \infty .
\end{align*}
\end{propo}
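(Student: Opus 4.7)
The plan is to build a Fej\'er-type analysis around Proposition~\ref{prop:gain}. Let $x^\star$ be a solution in $B_{\rho/2}(x_0)$ (provided by (A3)), set $\phi_k := \norm{x_k - x^\star}^2$, and note that Proposition~\ref{prop:bound} guarantees all iterates lie in $B_\rho(x_0)$, so the hypotheses of Proposition~\ref{prop:gain} are met. Substituting Lemma~\ref{lemma:aux} (with $x = x^\star$) into Proposition~\ref{prop:gain}, and using Lemma~\ref{lemma:dk-and-ineq}(c) to get $\norm{D_k}\norm{F(w_k)-y}\geq q\norm{F(w_k)-y}^2$, I would derive the master inequality
\begin{equation*}
\phi_{k+1} - \phi_k \;\le\; \alpha_k(\phi_k - \phi_{k-1}) + \alpha_k(1+\alpha_k)\norm{x_k - x_{k-1}}^2 - \norm{w_k - x_{k+1}}^2 - 2q(q-\eta)\lambda_k^{-1}\norm{F(w_k) - y}^2.
\end{equation*}

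Dropping the two manifestly nonpositive terms, taking positive parts, and writing $a_k := [\phi_k-\phi_{k-1}]_+$ gives $a_{k+1} \le \alpha\, a_k + \varepsilon_k$ with $\alpha_k\le\alpha<1$ and $\varepsilon_k := \alpha_k(1+\alpha_k)\norm{x_k-x_{k-1}}^2$. By Remark~\ref{rem:alpha}(b), $\sum_k\varepsilon_k<\infty$, and a standard discrete Gronwall/geometric-series estimate then yields $\sum_k a_k<\infty$. Consequently the auxiliary sequence $\phi_k - \sum_{j<k}a_j$ is nonincreasing and bounded below by $-\sum_j a_j$, hence convergent; this proves that $\lim_k\norm{x_k-x^\star}$ exists for every solution $x^\star\in B_\rho(x_0)$.

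Next I would sum the master inequality from $k=0$ to $N$. The left telescope contributes $\phi_{N+1}-\phi_0\ge -\phi_0$, the inertial cross-term is controlled by $\sum \alpha_k(\phi_k-\phi_{k-1})\le \alpha\sum a_k<\infty$, and the third right-hand term is summable by Remark~\ref{rem:alpha}(b). Rearranging produces a uniform bound on $\sum_{k=0}^N\norm{w_k-x_{k+1}}^2$ and on $2q(q-\eta)\sum_{k=0}^N\lambda_k^{-1}\norm{F(w_k)-y}^2$, settling the first two series. For the third series I would invoke Lemma~\ref{lemma:dk-and-ineq}(d), i.e.\ $\norm{F(x_{k+1})-y}\le\tfrac{1+\eta}{1-\eta}\norm{F(w_k)-y}$, together with the standing structural bound on $(\lambda_k)$ provided by (A4) (which together with the natural regularity of $(\lambda_k)$ used throughout the paper ensures $\lambda_{k+1}^{-1}/\lambda_k^{-1}$ is uniformly bounded), and reindex to transfer summability from the $w_k$-residuals to the $x_k$-residuals.

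The main obstacle is the derivation of the master inequality, since it requires correctly combining the three auxiliary results (Proposition~\ref{prop:gain}, Lemma~\ref{lemma:aux}, and Lemma~\ref{lemma:dk-and-ineq}(c)) and tracking the signs; the bound $\sum \alpha_k(\phi_k-\phi_{k-1}) < \infty$ is the most delicate step because the increments $\phi_k-\phi_{k-1}$ are not a priori one-signed, and only after extracting the positive parts and invoking the Alvarez--Attouch recursion does this sum become controllable. Once that recursion is in place, the rest is a matter of careful telescoping and reindexing.
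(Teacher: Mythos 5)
Your proposal follows essentially the same route as the paper's proof: the same master inequality obtained by combining Lemma~\ref{lemma:aux}, Proposition~\ref{prop:gain} and Lemma~\ref{lemma:dk-and-ineq}(c), the same positive-part recursion $a_{k+1}\le\alpha a_k+\varepsilon_k$ with summable $\varepsilon_k$ and the auxiliary nonincreasing sequence $\phi_k-\sum_{j<k}a_j$ to obtain the limit, the same telescoping for the first two series, and Lemma~\ref{lemma:dk-and-ineq}(d) for the third. The one place you add something is in flagging the index shift $\lambda_k\mapsto\lambda_{k+1}$ needed for the third series (which the paper passes over silently); just note that (A4) is only a \emph{lower} bound on $\lambda_k$ and does not by itself control the ratio $\lambda_k/\lambda_{k+1}$, so one should either invoke a boundedness assumption on $(\lambda_k)$ or instead estimate $\norm{F(x_k)-y}\le\norm{F(w_k)-y}+\tfrac{C}{1-\eta}\,\alpha_k\norm{x_k-x_{k-1}}$ and use the summability of $\alpha_k\norm{x_k-x_{k-1}}^2$ from Remark~\ref{rem:alpha}(b).
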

\begin{proof}
From \eqref{eq:w} with $x = x^\star$ and Proposition~ \ref{prop:gain}
we conclude that%
\footnote{Notice that in Algorithm~\ref{alg:init-exact} we define $x_k$
for $k \geq 0$ and $\alpha_k$ for $k\geq 1$. For this proof we additionally
define $x_{-1} := x_0$ and $\alpha_0 := 0$; thus, \eqref{eq:w} holds
trivially for $k = 0$.}
\begin{multline*}
(1+\alpha_k) \norm{x_k -x^*}^2 - \alpha_k \norm{x_{k-1} -x^*}^2
+ \alpha_k (1+\alpha_k) \norm{x_k - x_{k-1}}^2 - \norm{x_{k+1} -x^*}^2
= \\
= \norm{w_k - x^*}^2 - \norm{x_{k+1} -x^*}^2
\ \geq \ \norm{w_k - x_{k+1}}^2 + 2 \la_k^{-1} (q-\eta) \,\norm{D_k}
       \,\norm{F(w_k) - y} \, , \ k \geq 0 \, .
\end{multline*}
Thus, defining $\varphi_k := \norm{x_k-x^*}^2$ \,and
\,$\eta_k := \alpha_k \norm{x_k - x_{k-1}}^2$, we obtain
\begin{multline} \label{eq:gamma}
\alpha_k (\varphi_k - \varphi_{k-1}) - (\varphi_{k+1} - \varphi_k)
+ (1+\alpha_k) \eta_k \ \geq \\
\ \geq \ \norm{x_{k+1} - w_k}^2 + 2 \la_k^{-1} (q-\eta) \,\norm{D_k}
     \,\norm{F(w_k) - y} \, , \ k \geq 0 \, .
\end{multline}
Since $\alpha_k<1$ we get from there that
\begin{align*}
\varphi_{k+1} - \varphi_{k} - \alpha_k(\varphi_k-\varphi_{k-1}) &
\leq (1+\alpha_k)\eta_k - \norm{w_{k}-x_{k+1}}^2
- \tfrac{2(q-\eta)}{\lambda_k}\norm{D_k}\norm{F(w_k)-y}\\
& \le 2\eta_{k}.
\end{align*}
We abbreviate $\zeta_k := \varphi_k-\varphi_{k-1}$ and write
$[\zeta_{k}]_{+}$ for the positive part and get
\begin{align*}
\zeta_{k+1}\leq \alpha_k[\zeta_{k}]_{+} + 2\eta_{k} 
\end{align*}
and hence with $\alpha_{k}\leq \alpha<1$
\begin{align*}
[\zeta_{k+1}]_+ \leq \alpha_k [\zeta_k]_+ + 2\eta_k \leq
\cdots\leq \alpha^k[\zeta_1]_+ + 2 \sum\limits_{j=0}^{k-1}\alpha^{j}\eta_{k-j}.
\end{align*}
We sum this inequality from $k=0,\dots,\infty$ and get 
\begin{align} \label{eq:sum-theta-k-+}
\sum_{k=0}^{\infty} [\zeta_{k+1}]_+
\leq \tfrac1{1-\alpha} [\zeta_1]_+
+ 2 \sum_{k=0}^{\infty} \sum_{j=0}^{k-1} \alpha^{j} \eta_{k-j} \, .
\end{align}
The latter sum can be calculated by swapping the order and
substitution: 
\begin{align*}
\sum_{k=0}^{\infty} \sum_{j=0}^{k-1} \alpha^{j} \eta_{k-j}
& = \sum_{j=0}^{\infty} \sum_{k=j+1}^{\infty} \alpha^{j} \eta_{k-j}
\\
& \sum_{j=0}^{\infty} \alpha^j \sum_{l=1}^{\infty} \eta_l
= \tfrac1{1-\alpha} \sum_{l=1}^{\infty} \eta_l \, .
\end{align*}
Thus~\eqref{eq:sum-theta-k-+} turns into 
\begin{align*}
\sum_{k=0}^{\infty} [\zeta_{k+1}]_+
\leq \tfrac1{1-\alpha} \left( [\zeta_1]_+
+ 2 \sum_{l=1}^{\infty} \eta_l \right) \, .
\end{align*}
The series on the right hand side is convergent by assumption
and hence $\sum_{k=0}^{\infty}[\zeta_k]_+ < \infty$.

Now we define $\gamma_k = \varphi_k - \sum_{j=1}^k [\zeta_j]_{+}$
which is bounded from below since $\varphi_k \geq 0$ and the series
is convergent. Moreover we have (recalling the definition of
$\zeta_{k}$)
\begin{align*}
\gamma_{k+1} & = \varphi_{k+1} - [\zeta_{k+1}]_{+}
- \sum_{j=1}^k [\zeta_j]_{+}
\\
& \leq \varphi_{k+1} - \varphi_{k+1} + \varphi_{k}
- \sum_{j=1}^k [\zeta_j]_{+} = \gamma_k \, .
\end{align*}
We see that $\gamma_k$ is non-increasing and bounded from below,
hence convergent. This implies that the limits
\begin{align*}
\lim_{k\to\infty} \varphi_k = \lim_{k\to\infty} \gamma_k
+ \sum_{j=1}^{\infty} [\zeta_j]_{+}
\end{align*}
all exist and since $\varphi_{k} = \norm{x_{k}-x^{*}}^2$ we get
that $\lim_{k\to\infty}\norm{x_k - x^*}$ exists.

Now we start at~\eqref{eq:gamma} again and write it as
\begin{align*}
2 \lambda_k^{-1} (q-\eta) \norm{D_k} \, \norm{F(w_k) - y}
+ \norm{x_{k+1} - w_k}^2 \leq
2 \eta_k + \varphi_k - \varphi_{k+1} + \alpha_k[\zeta_k]_+ \, .
\end{align*}
Using Lemma~\ref{lemma:dk-and-ineq} c) we get 
\begin{align*}
2 \lambda_k^{-1} q(q-\eta) \norm{F(w_k) - y}^2
+ \norm{x_{k+1} - w_k}^2 \leq
2 \eta_k + \varphi_k - \varphi_{k+1} + \alpha_k [\zeta_k]_+ \, .
\end{align*}
Summing from $k=0,\dots,\infty$ gives 
\begin{align*}
\sum_{k=0}^{\infty} \left(2 \lambda_k^{-1} q(q-\eta)
\norm{F(w_k) - y}^2 + \norm{x_{k+1} - w_k}^2 \right)
\leq \varphi_1 + \sum_{k=0}^{\infty} \left( \alpha_k [\zeta_k]_+
+ 2\eta_k \right)
\end{align*}
and since the right hand side of this inequality is bounded,
we get that $\sum_k \norm{x_{k+1} - w_k}^2 < \infty$ and
$\sum_k \lambda_k^{-1} \norm{F(w_k) - y}^2 < \infty$.
Lemma~\ref{lemma:dk-and-ineq} d) implies that the last series is
convergent as well.
\end{proof}

\subsection{A strong convergence result} \label{ssec:2.2}

In what follows we prove a (strong) convergence result for the inLM
method (Algorithm~\ref{alg:init-exact}) in the exact data case. To
prove this result we use two additional assumptions:
\smallskip

\noindent
(A5) \ There exists $\la_{max} > 0 $ s.t. $\la_k \leq \la_{max}$, for
    $k\geq 0$; \smallskip

\noindent
(A6) \ $(\alpha_k)$ is monotone non-increasing (see Step~[2.3] of
Algorithm~\ref{alg:init-exact}). \smallskip

\begin{theorem}[Convergence] \label{th:conv}
Let (A1) -- (A6) hold and $(x_k)$, $(w_k)$ be sequences generated by
Algorithm~\ref{alg:init-exact} (with $(\la_k)$ and $(\alpha_k)$ chosen
as in Steps~[1] and [2.3] respectively). Additionally, assume that
$(\alpha_k)$ complies with Assumption~\ref{ass:alpha}.
Then, either the sequences $(x_k)$, $(w_k$) stop after a finite
number $k_0 \in \N$ of steps (in this case it holds $x_{k_0+1} = w_{k_0}$
and $F(w_{k_0}) = y$), or there exists $\bar{x} \in B_\rho(x_0)$,
solution of $F(x) = y$, s.t. $\lim_k x_k = \lim_k w_k = \bar{x}$.
\end{theorem}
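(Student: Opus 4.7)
The plan is as follows. The finite-termination alternative is immediate from Step [2.4] of the algorithm: if the loop exits at iteration $k_0$, then $F(w_{k_0}) = y$ and $x_{k_0+1} = w_{k_0}$, matching the statement. So the substantive task is the infinite case. From Proposition~\ref{prop:series} together with (A5), I would extract: $(x_k),(w_k)\subset B_\rho(x_0)$, $\sum_k \norm{x_{k+1}-w_k}^2<\infty$, $\sum_k\norm{F(w_k)-y}^2<\infty$ and $\sum_k\norm{F(x_k)-y}^2<\infty$ (using $\lambda_k\le\lambda_{\max}$), and $\lim_k\norm{x_k-x^*}$ exists for every solution $x^*\in B_\rho(x_0)$.

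Next I would show $\norm{x_{k+1}-x_k}\to 0$. Writing $x_{k+1}-x_k = s_k + \alpha_k(x_k-x_{k-1})$ with $\alpha_k\le\alpha<1$ and $\norm{s_k}=\norm{x_{k+1}-w_k}\to 0$, iterating the recursion $a_{k+1}\le\alpha\,a_k+\norm{s_k}$ yields $a_k:=\norm{x_{k+1}-x_k}\to 0$, which also gives $\norm{w_k-x_k}=\alpha_k\norm{x_k-x_{k-1}}\to 0$.

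The main work is a Hanke-style Cauchy argument. For $j>k$, pick $l\in\{k,\dots,j\}$ minimizing $\norm{F(x_l)-y}$ and apply the identity
\begin{equation*}
\norm{x_j-x_l}^2 \;=\; \norm{x_j-x^\star}^2 - \norm{x_l-x^\star}^2 - 2\langle x_j-x_l,\, x_l-x^\star\rangle,
\end{equation*}
and the analogous one for the pair $(k,l)$. Because $\lim_m\norm{x_m-x^\star}^2$ exists, the first two terms are small once $k,l,j$ are large. For the cross term, telescope $x_j-x_l=\sum_{m=l}^{j-1}(x_{m+1}-x_m)=\sum_{m=l}^{j-1}(s_m+\alpha_m(x_m-x_{m-1}))$. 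Using Lemma~\ref{lemma:dk-and-ineq}(b) to rewrite $s_m=-\lambda_m^{-1}A_m^\ast D_m$, the wTCC twice at $w_m$ to get $\norm{A_m(x_l-x^\star)}\le(1+\eta)(\norm{F(x_l)-y}+2\norm{F(w_m)-y})$, and Lemma~\ref{lemma:dk-and-ineq}(c),(d) to estimate $\norm{D_m}\le\norm{F(w_m)-y}$ and to convert back and forth between residuals at $w_m$ and $x_{m+1}$, the product $\norm{F(x_l)-y}\sum_{m=l}^{j-1}\norm{F(w_m)-y}$ is controlled by the standard trick: the minimality of $\norm{F(x_l)-y}$ forces $(j-l)\norm{F(x_l)-y}^2\le C\sum_{m=l}^{j-1}\norm{F(w_m)-y}^2$, and then Cauchy--Schwarz collapses the sum to a tail of $\sum_m\norm{F(w_m)-y}^2$, which is small. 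The inertial contribution is even cleaner: by Assumption~\ref{ass:alpha} one has $\alpha_m\norm{x_m-x_{m-1}}\le\theta_m$, so
\begin{equation*}
\Big|\sum_{m=l}^{j-1}\alpha_m\langle x_m-x_{m-1},\,x_l-x^\star\rangle\Big|\;\le\;\tfrac{3\rho}{2}\sum_{m=l}^{j-1}\theta_m,
\end{equation*}
again a tail of a convergent series. Combining both estimates (and their analogues on the $[k,l]$ side) makes $\norm{x_j-x_k}$ arbitrarily small for $k$ large, so $(x_k)$ is Cauchy.

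Finally, letting $\bar x:=\lim_k x_k$, the bound $\norm{w_k-x_k}\to 0$ gives $\lim_k w_k=\bar x$ as well; closedness of $B_\rho(x_0)$ puts $\bar x\in B_\rho(x_0)$, and continuity of $F$ combined with $\norm{F(x_k)-y}\to 0$ yields $F(\bar x)=y$. I expect the main obstacle to be the bookkeeping in the Cauchy step: the Hanke trick must absorb the extra inertial summands without losing the summability derived from Proposition~\ref{prop:series}, and the transition between residuals at $w_m$ and at $x_m$ via Lemma~\ref{lemma:dk-and-ineq}(d) must be done uniformly in the choice of the minimizing index $l$.
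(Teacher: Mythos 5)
Your proposal is correct and all of its key mechanisms do close, but it takes a recognizably different route from the paper. The paper runs its Cauchy argument on $(w_k)$ rather than $(x_k)$: it selects indices $l_j$ at which $\norm{F(w_{l_j})-y}$ is minimal among all earlier $w$-residuals, proves a quasi-monotonicity estimate $\norm{w_k-w_{l_j}}^2-\norm{x_{k+1}-w_{l_j}}^2\le\mu_k$ with $\mu_k$ proportional to $\la_k^{-1}\norm{F(w_k)-y}^2$ (this is where the wTCC and the minimality of the chosen residual enter, playing exactly the role of your cross-term estimate), converts $\norm{x_{k+1}-w_{l_j}}^2$ into $\norm{w_{k+1}-w_{l_j}}^2$ via Lemma~\ref{lemma:aux}, and telescopes the squared distances; the monotonicity assumption (A6) is needed precisely to make the inertial correction terms telescope in that step. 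You instead follow the Hanke--Neubauer--Scherzer template literally on $(x_k)$: the three-point identity around $x^\star$, the existence of $\lim_k\norm{x_k-x^\star}$ from Proposition~\ref{prop:series} to dispose of the difference of squared norms, and telescoping of the cross term, with the inertial summands absorbed through $\alpha_m\norm{x_m-x_{m-1}}\le\theta_m$. Your bookkeeping does work: the passage between residuals at $w_m$ and at $x_{m+1}$ via Lemma~\ref{lemma:dk-and-ineq}(d) is uniform in the minimizing index, and in fact the Cauchy--Schwarz detour is unnecessary, since minimality gives $\norm{F(x_l)-y}\le\norm{F(x_{m+1})-y}\le\tfrac{1+\eta}{1-\eta}\norm{F(w_m)-y}$ termwise, collapsing the mixed sum directly to a tail of $\sum_m\la_m^{-1}\norm{F(w_m)-y}^2$ (the weights $\la_m^{-1}$ being bounded above and below by (A4) and (A5)). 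A small bonus of your version is that it appears not to need (A6) at all; what the paper's version buys in exchange is the Cauchy property of $(w_k)$ directly, whereas you obtain it a posteriori from $\norm{w_k-x_k}\le\theta_k\to 0$.
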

\begin{proof}
We consider two cases. \smallskip

\noindent \textbf{Case I:} \ $F(w_{k_0}) = y$ for some $k_0 \in \mathbb{N}$.
\\
In this case, the sequences $(x_k)$, $(w_k)$ read $x_0, \dots, x_{k_0+1}$
and $w_0, \dots, w_{k_0}$. Moreover, it holds $x_{k_0+1} = w_{k_0}$ and
$F(w_{k_0}) = y$ (see Remark~\ref{rem:station}). \smallskip

\noindent \textbf{Case II:} \ $F(w_k) \neq y$, for every $k \geq 0$.
\\
Notice that, in this case, the real sequence $\big(\norm{F(x_k) - y}\big)$
is strictly positive. Moreover, it follows from (A5) and Proposition~%
\ref{prop:series} (see second series) that $\lim_k \norm{F(w_k) - y} = 0$.
Therefore, there exists a strictly monotone increasing sequence
$(l_j) \in \N$ satisfying
\begin{equation} \label{eq:min}
\norm{F(w_{l_j}) - y} \ \leq \ \norm{F(w_k) - y} \, ,
\quad \mbox{for} \quad k = 0, \dots,  l_j \, .
\end{equation}
Notice that, given $k > 0$ and $z \in B_\rho(x_0)$, it holds
\begin{eqnarray*}
\|w_k  -  z\|^2 &\!\!\!\! - &\!\!\! \|x_{k+1} - z\|^2 =
    - \norm{x_{k+1} - w_k}^2 - 2 \ipl x_{k+1} - w_k , w_k - z \ipr
      \nonumber \\
&\!\!\!\!\! \leq &\!\!\!
      2 \ipl w_k - x_{k+1} , w_k - z \ipr
      \nonumber \\
&\!\!\!\!\! = &\!\!\!
      2 \ipl \la_k^{-1} A_k^*( F(w_k) + A_k (x_{k+1} - w_k) -y), w_k -z \ipr
      \nonumber \\
&\!\!\!\!\! = &\!\!\!
      2 \la_k^{-1} \ipl -F(x_{k+1}) + F(w_k) + A_k(x_{k+1} - w_k)
      + F(x_{k+1}) - y , A_k (w_k - z) \ipr
      \nonumber \\
&\!\!\!\!\! \leq &\!\!\!
      2 \la_k^{-1} \Big[ \eta \norm{F(x_{k+1}) - F(w_k)} 
      + \norm{F(x_{k+1}) - y} \Big] \, \norm{A_k (w_k - z)} \nonumber \\
&\!\!\!\!\! \leq &\!\!\!
      2 \la_k^{-1} (1+\eta) \Big[ \eta \norm{F(x_{k+1}) - F(w_k) \pm y} 
      + \norm{F(x_{k+1}) - y} \Big] \, \norm{F(w_k) - F(z) \pm y}
      \nonumber \\
&\!\!\!\!\! \leq &\!\!\!
      2 \la_k^{-1} (1+\eta)^2 \Big[ \norm{F(x_{k+1})-y}\norm{F(w_k)-y}
      + \norm{F(x_{k+1})-y}\norm{F(z)-y} \Big] \nonumber \\
&\!\!\!\!\!  &\!\!\!\!
      + 2 \la_k^{-1} \eta (1+\eta) \Big[ \norm{F(w_k) - y}^2
      + \norm{F(w_k) - y} \norm{F(z) - y} \Big]
\end{eqnarray*}
(in the second inequality we used (A2); in the third inequality we used
\cite[Eq.(11.7)]{EngHanNeu96}). Taking $z = w_{l_j}$ in the last inequality
and arguing with Lemma~\ref{lemma:dk-and-ineq}~(d) and \eqref{eq:min}
it follows
\begin{equation} \label{eq:muk}
\norm{w_k - w_{l_j}}^2 - \norm{x_{k+1} - w_{l_j}}^2
\ \leq \ 2 (1+\eta) \Big[ 2 \frac{(1+\eta)^2}{1-\eta} + 2 \eta \Big] 
\, \la_k^{-1} \norm{F(w_k) - y}^2 \ =: \ \mu_k \, ,
\end{equation}
for $k = 0, \dots, l_j$.
Next we estimate the second term on the left-hand-side of \eqref{eq:muk}.
Lemma~\ref{lemma:aux} (with $x = w_{l_j}$) implies
$$
\norm{w_{k+1} - w_{l_j}}^2 =
(1+\alpha_{k+1}) \norm{x_{k+1} - w_{l_j}}^2 - \alpha_{k+1}
\norm{x_k - w_{l_j}}^2 + \alpha_{k+1} (1+\alpha_{k+1})
\norm{x_{k+1} - x_k}^2 ,
$$
for $k = 0, 1, \dots$; from where we conclude that
\begin{equation} \label{eq:xlj}
0\leq \norm{x_{k+1} - w_{l_j}}^2 \ \leq \ \norm{w_{k+1} - w_{l_j}}^2
+ \alpha_{k+1}
\big( \norm{x_k - w_{l_j}}^2 - \norm{x_{k+1} - w_{l_j}}^2 \big)
\, , \ k \geq 0 \, .
\end{equation}
Now, combining \eqref{eq:muk} with \eqref{eq:xlj}, and arguing with (A6),
we obtain
\begin{eqnarray} \label{eq:telesc}
\norm{w_k - w_{l_j}}^2 - \norm{w_{k+1} - w_{l_j}}^2
& \leq &
  \alpha_{k+1} \big( \norm{x_k -w_{l_j}}^2 - \norm{x_{k+1} -w_{l_j}}^2 \big)
  + \mu_k
  \nonumber \\
& \leq &
  \alpha_k \norm{x_k - w_{l_j}}^2 - \alpha_{k+1} \norm{x_{k+1} -w_{l_j}}^2
  + \mu_k \, , \label{eq:txlj}
\end{eqnarray}
for $k = 0, \dots, l_j$. Let $0 \leq m \leq l_j$. Adding up
\eqref{eq:telesc} for $k = m, \ldots, l_j-1$ follows
$$
\norm{w_m - w_{l_j}}^2 - \norm{w_{l_j} - w_{l_j}}^2
\ \leq \ \alpha_m \norm{x_m - w_{l_j}}^2
       - \alpha_{l_j} \norm{x_{l_j} - w_{l_j}}^2
       + \summ_{k=m}^{l_j-1} \mu_k \, ,
$$
from where we derive that, for any $\varepsilon > 0$ it holds
\begin{eqnarray*}
\norm{w_m - w_{l_j}}^2
& \leq &
  \alpha_m \norm{x_m - w_{l_j} \pm w_m}^2 + \summ_{k=m}^{l_j-1} \mu_k
  \nonumber \\
& \leq &
  \alpha_m \big( (1+\varepsilon) \norm{x_m - w_m}^2 + (1+\tfrac1\varepsilon)\norm{w_m - w_{l_j}}^2 \big)
  + \summ_{k=m}^{l_j} \mu_k
  \nonumber\\
& \leq &
  (1+\varepsilon) \summ_{k=m}^{l_j} \alpha_k \norm{x_k - w_k}^2
  + (1+\tfrac1\varepsilon)\alpha_m \norm{w_m - w_{l_j}}^2 + \summ_{k=m}^{l_j} \mu_k \, .
\end{eqnarray*}
Consequently, whenever $m \leq l_j - 1$, it holds
$$
(1-\tfrac{\varepsilon+1}\varepsilon\alpha_m) \norm{w_m - w_{l_j}}^2 \ \leq \
(1+\varepsilon) \summ_{k=m}^{l_j} \alpha_k \norm{x_k - w_k}^2 +
\summ_{k=m}^{l_j} \mu_k \, .
$$
Now we choose $\varepsilon>0$ such that $\tfrac{\varepsilon+1}
\varepsilon\alpha_{0}<1$, define $\beta := (1-\tfrac{\varepsilon+1}
\varepsilon\alpha_0)^{-1}$, it follows from (A6) and \eqref{def:inLM-w}
\begin{eqnarray} \label{eq:number17}
\norm{w_m - w_{l_j}}^2 & \leq &
  2 \beta \summ_{k=m}^\infty \alpha_k \norm{x_k - w_k}^2
  + \beta \summ_{k=m}^\infty \mu_k
  \nonumber \\
& \leq &  2 \beta\alpha^3 \summ_{k=m}^\infty  \norm{x_k - x_{k-1}}^2
        + \beta \summ_{k=m}^\infty \mu_k \, , \ m < l_j
\end{eqnarray}
(notice that $\beta > 0$ and $\alpha_k \leq \alpha$ due to (A6)).

Notice that (A2) together with Proposition~\ref{prop:series}
guarantee the summability of both series $\sum_k \mu_k$ and
$\sum_k \norm{x_k - x_{k-1}}^2$.
Thus, defining $s_m := 2 \beta\alpha^3 \sum_{k\geq m}
\norm{x_k - x_{k-1}}^2 + \beta \sum_{k\geq m} \mu_k$, for
$m \in \N$, follows $s_m \to 0$ as $m \to \infty$.

Let $n > m$ be given. Choosing $l_j > n$, it follows from
\eqref{eq:number17}
$$
\norm{w_n - w_m} \ \leq \
\norm{w_n - w_{l_j}} + \norm{w_{l_j} - w_m} \ \leq \
\sqrt{s_n} + \sqrt{s_m} \ \leq \ 2 \sqrt{s_m} \, ,
$$
from where we conclude that $(w_k)$ is a Cauchy sequence.
Consequently, $(w_k)$ converges to some $\bar{x} \in X$.
From Proposition~\ref{prop:series} (see first series) it
follows $\lim_k x_k = \lim_k w_k = \bar{x}$.

It remais to prove that $\bar{x}$ is a solution of $F(x) = y$.
It suffices to verify that $\norm{F(w_k) - y} \to 0$ as $k\to\infty$.
This fact, however, is a consequence of Proposition~\ref{prop:series}
(see second series) together with Assumption~(A5).
\end{proof}

\subsection{Regularization properties} \label{ssec:2.3}

In this section we address the noisy data case, i.e.  $\delta > 0$,
and investigate regularization properties of the inertial
Levenberg-Marquardt method. For noisy data the inLM method reads is stated in Algorithm~\ref{alg:init-noise}.

\begin{algorithm}[h!]
\begin{center}
\fbox{\parbox{13.5cm}{
$[0]$ choose an initial guess $x_0 \in X$;
      \ set $w_0^\delta := x_0$; \ $k := 0$; \ flag := 'FALSE';
\medskip

$[1]$ choose $\tau > (\eta+1)(q-\eta)^{-1}$, \ $\alpha \in [0,1)$
      \ and \ $(\la_k)_{k\geq0} \in \R^+$;
\medskip

$[2]$ {\bf repeat}
\smallskip

\ \ \ \ \ \ \ {\bf if \ $\big( \norm{F(w_k^\delta) - y^\delta}
> \tau\delta \big)$ \ then}
\smallskip

\ \ \ \ \ \ \ \ \ \ $[2.1]$ $A_k^\delta := F'(w_k^\delta)$; \
          compute \,$s_k^\delta \in X$ as the solution of
\smallskip

\centerline{$\big( (A_k^\delta)^* A_k^\delta + \lambda_k I \big)
\, s_k^\delta \ = \ (A_k^\delta)^* \,
\big( y^\delta - F(w_k^\delta) \big)$;}
\smallskip

\ \ \ \ \ \ \ \ \ \ $[2.2]$ $x_{k+1}^\delta := w_k^\delta + s_k^\delta$;
\smallskip

\ \ \ \ \ \ \ \ \ \ $[2.3]$ $k := k+1$;
\smallskip

\ \ \ \ \ \ \ \ \ \ $[2.4]$ choose \,$\alpha_k^\delta \in [0,\alpha]$; \
          $w_k^\delta := x_k^\delta +
          \alpha_k^\delta (x_k^\delta - x_{k-1}^\delta)$;
\smallskip

\ \ \ \ \ \ \ {\bf else}
\smallskip

\ \ \ \ \ \ \ \ \ \ $[2.5]$ $s_k^\delta := 0$;
\ $x_{k+1}^\delta := w_k^\delta$; \ $k^* := k$; \ flag := 'TRUE';
\smallskip

\ \ \ \ \ \ \ {\bf end if}
\smallskip

\ \ \ \ \,{\bf until} \ \big(flag = 'TRUE'\big)
\smallskip
} }
\end{center} \vskip-0.5cm
\caption{Inertial Levenberg-Marquardt method in the noisy data case.}
\label{alg:init-noise}
\end{algorithm}

\begin{remark}[Comments regarding Algorithm~\ref{alg:init-noise}]
\label{rem:alg2} \mbox{}

\noindent The {\em discrepancy principle} is used as stopping criterion in
Algorithm~\ref{alg:init-noise}. i.e. the loop in Step~[2] terminates at step
$k^* = k^*(\delta, y^\delta)$ s.t. $k^* := \min\{ k\in\N$;
$\norm{F(w_k^\delta) - y^\delta} \leq \tau\delta \}$, where $\tau > 1$.

Note that Algorithm~\ref{alg:init-noise} generate sequences
$(x_k^\delta)_{k=0}^{k^*+1}$ and $(w_k^\delta)_{k=0}^{k^*}$. The finiteness
of the stopping index $k^*$ in Step~[2.5] is addressed in Proposition~%
\ref{prop:kstar}.

For each $0 \leq k \leq k^*$, define $D_k^\delta := F(w_k^\delta) +
A_k^\delta (x_{k+1}^\delta-w_k^\delta) - y^\delta$.
It is straightforward to verify that the results stated in Lemma~%
\ref{lemma:aux} and Lemma~\ref{lemma:dk-and-ineq} remain valid in
the noisy data case (the corresponding proofs are analogous and will
be omitted).

Additionally, if the sequence of inertial paramenters $(\alpha_k^\delta)$
in Algorithm~\ref{alg:init-noise} is chosen in accordance with
\begin{equation} \label{def:alphakdelta}
\alpha_k^\delta :=
  \left\{ \begin{array}{cl}
    \min \left\{ \dfrac{\theta_k}{\norm{x_k^\delta-x_{k-1}^\delta}^2} ,
    \dfrac{\min \big\{ \theta_k, \ \rho-\norm{x_k^\delta-x_0} \big\} }
    {\norm{x_k^\delta-x_{k-1}^\delta}} , \alpha \right\}
    & \!\!\!\!\!\! , \ {\rm if } \ \norm{x_k^\delta - x_{k-1}^\delta} > 0 \\[2ex]
    0     & \!\!\!\!\!\! , \ {\rm otherwise}
  \end{array} \right.
\end{equation}
(where $(\theta_k)$ is chosen as in Assumption~\ref{ass:alpha}),
then Remark~\ref{rem:alpha}~(a) holds true for $k = 1, \dots, k^*$.
\end{remark}

In the sequel we extend the ``gain estimate'' in Proposition~%
\ref{prop:gain} to the noisy data case.

\begin{propo} \label{prop:gainN}
Let (A1) -- (A4) hold and $(x^\delta_k)$, $(w^\delta_k)$ be sequences
generated by Algorithm~\ref{alg:init-noise} (with $(\la_k)$ and
$(\alpha_k^\delta)$ chosen as in Steps~[1] and [2.4] respectively). If
$w^\delta_k \in B_\rho(x_0)$ for some $0 \leq k \leq k^*$, then
$$
\|w^\delta_k - x^*\|^2 - \|x^\delta_{k+1} - x^*\|^2 \geq
\|w^\delta_k - x^\delta_{k+1}\|^2 + 2\la_k^{-1} \|D^\delta_k\|
\big[ (q-\eta) \|y^\delta - F(w^\delta_k) \| - (\eta+1) \delta \big] ,
$$
for any $x^* \in B_\rho(x_0)$ solution of $F(x) = y$.
\end{propo}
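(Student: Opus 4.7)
The plan is to closely mirror the proof of Proposition~\ref{prop:gain}, carrying the noise $\delta$ through carefully and using the triangle inequality $\|y-y^\delta\|\leq\delta$ whenever the argument needs to transfer between the exact residual $F(w_k^\delta)-y$ (which is what (A2) controls) and the noisy residual $F(w_k^\delta)-y^\delta$ (which is what appears in $D_k^\delta$ and in the iteration).

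First, I would write the polarization identity
\begin{align*}
\|w_k^\delta-x^*\|^2-\|x_{k+1}^\delta-x^*\|^2
&= \|w_k^\delta-x_{k+1}^\delta\|^2+2\ipl w_k^\delta-x_{k+1}^\delta,\,x_{k+1}^\delta-x^*\ipr
\end{align*}
and then substitute the noisy analogue of Lemma~\ref{lemma:dk-and-ineq}(b), namely $w_k^\delta-x_{k+1}^\delta=\lambda_k^{-1}(A_k^\delta)^*D_k^\delta$, into the inner product. Inserting the telescoping $\pm w_k^\delta$, $\pm F(w_k^\delta)$, $\pm y^\delta$ into $A_k^\delta(x_{k+1}^\delta-x^*)$ produces, as in the exact case, the identity
\begin{align*}
\|w_k^\delta-x^*\|^2-\|x_{k+1}^\delta-x^*\|^2
= \|w_k^\delta-x_{k+1}^\delta\|^2
+2\lambda_k^{-1}\big[\|D_k^\delta\|^2+\ipl D_k^\delta,\,A_k^\delta(w_k^\delta-x^*)-F(w_k^\delta)+y^\delta\ipr\big].
\end{align*}

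Next, to apply (A2) I would rewrite the inner-product term by adding and subtracting $F(x^*)=y$:
\begin{align*}
A_k^\delta(w_k^\delta-x^*)-F(w_k^\delta)+y^\delta
= \big[F(x^*)-F(w_k^\delta)-A_k^\delta(x^*-w_k^\delta)\big]+(y^\delta-y).
\end{align*}
Applying Cauchy--Schwarz, then (A2) to the bracketed term and $\|y^\delta-y\|\leq\delta$ to the second term, yields
\begin{align*}
\ipl D_k^\delta,\,A_k^\delta(w_k^\delta-x^*)-F(w_k^\delta)+y^\delta\ipr
\geq -\eta\|D_k^\delta\|\,\|F(w_k^\delta)-y\|-\delta\|D_k^\delta\|.
\end{align*}
One more application of the triangle inequality gives $\|F(w_k^\delta)-y\|\leq\|F(w_k^\delta)-y^\delta\|+\delta$, so the lower bound becomes $-\eta\|D_k^\delta\|\|F(w_k^\delta)-y^\delta\|-(\eta+1)\delta\|D_k^\delta\|$ up to a further $-\eta\delta\|D_k^\delta\|$ contribution that I will absorb into the $(\eta+1)\delta$ term.

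Finally, I would invoke the noisy version of Lemma~\ref{lemma:dk-and-ineq}(c), which reads $\|D_k^\delta\|^2\geq q\|D_k^\delta\|\,\|F(w_k^\delta)-y^\delta\|$ (valid under (A4)), to rewrite the $\|D_k^\delta\|^2$ term and factor $\|D_k^\delta\|$ out of the bracket, obtaining
\begin{align*}
\|D_k^\delta\|^2-\eta\|D_k^\delta\|\|F(w_k^\delta)-y^\delta\|-(\eta+1)\delta\|D_k^\delta\|
\geq \|D_k^\delta\|\big[(q-\eta)\|y^\delta-F(w_k^\delta)\|-(\eta+1)\delta\big],
\end{align*}
which is exactly the desired estimate after multiplying by $2\lambda_k^{-1}$. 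The only real obstacle is the bookkeeping of the several $\delta$-terms that arise from bridging between $y$ and $y^\delta$; once they are collected correctly, all coefficients match.
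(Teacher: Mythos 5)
Your proposal is correct and follows essentially the same route as the paper's proof: the same polarization identity, the same substitution of the noisy analogue of Lemma~\ref{lemma:dk-and-ineq}(b), the same $\pm$ telescoping to isolate $\norm{D_k^\delta}^2$, the same splitting of the remaining term into the wTCC expression plus $y^\delta-y$, and the same two triangle inequalities whose $\delta$-contributions combine exactly into the $(\eta+1)\delta$ coefficient before invoking the noisy version of Lemma~\ref{lemma:dk-and-ineq}(c). The bookkeeping you describe matches the paper's computation step for step.
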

\begin{proof}
Since Lemma~\ref{lemma:dk-and-ineq} remains valid in the noise data
case (see Remark~\ref{rem:alg2}), we make a similar argument as in
the proof of Proposition~\ref{prop:gain} to establish that
\begin{eqnarray*}
\| w^\delta_k &\!\!\!\!\! - &\!\!\!\!\! x^*\|^2 - \|x^\delta_{k+1}-x^*\|^2
\ = \ \|w^\delta_k - x^\delta_{k+1}\|^2
    + 2\big\ipl w^\delta_k - x^\delta_{k+1} ,x^\delta_{k+1} - x^* \big\ipr
    \nonumber\\
& = &
  \| w^\delta_k - x^\delta_{k+1} \|^2 + 2\la_k^{-1}
  \big\ipl (A^\delta_k)^* \big[ F(w^\delta_k) + A^\delta_k(x^\delta_{k+1}
  - w^\delta_k) - y^\delta \big], x^\delta_{k+1} - x^* \big\ipr
  \nonumber\\
& = &
  \|w^\delta_k - x^\delta_{k+1}\|^2 + 2\la_k^{-1}
  \big\ipl D^\delta_k , A^\delta_k (x^\delta_{k+1} - x^*) \big\ipr
  \nonumber\\
& = &
  \|w^\delta_k - x^\delta_{k+1}\|^2 + 2\la_k^{-1}
  \big\ipl D^\delta_k , A^\delta_k ({x}^\delta_{k+1} - x^*)
  \pm A^\delta_k w^\delta_k \pm F(w^\delta_k) \pm y^\delta \big\ipr
  \nonumber\\
& = & \|w^\delta_k - x^\delta_{k+1}\|^2 + 2\la_k^{-1}
  \big[ \norm{D^\delta_k}^2 + \big\ipl D^\delta_k , A^\delta_k
  (w^\delta_k - x^*) - F(w^\delta_k) + y^\delta \big\ipr \big]
  \nonumber\\
& = & \|w^\delta_k - x^\delta_{k+1}\|^2 + 2\la_k^{-1}
  \big[ \norm{D^\delta_k}^2 + \big\ipl D^\delta_k, F(x^*) - F(w^\delta_k)
  - A^\delta_k (x^* - w^\delta_k) + y^\delta - y \big\ipr \big]
  \nonumber\\
& \geq & \|w^\delta_k - x^\delta_{k+1}\|^2 + 2\la_k^{-1}
  \big[ \norm{D^\delta_k}^2 - \eta\norm{D_k} \norm{y - F(w^\delta_k)}
  - \norm{D^\delta_k} \delta \big]
  \nonumber\\
& \geq & \|w^\delta_k - x^\delta_{k+1}\|^2 + 2\la_k^{-1}
  \norm{D^\delta_k} \big[ \norm{D^\delta_k} - \eta
  \norm{y^\delta - F(w^\delta_k)} - (\eta+1) \delta \big]
  \nonumber\\
& \geq & \|w^\delta_k - x^\delta_{k+1}\|^2 + 2\la_k^{-1}
  \norm{D^\delta_k} \big[ (q-\eta) \norm{y^\delta - F(w^\delta_k)}
  - (\eta+1) \delta \big] ,
\end{eqnarray*}
completing the proof.
\end{proof}

\begin{corol} \label{cor:boundN}
Due to the choice $\tau > (\eta+1)(q-\eta)^{-1}$ in Step~[1] of
Algorithm~\ref{alg:init-noise}, it follows from Proposition~%
\ref{prop:gainN}
\smallskip

\centerline{$\| w^\delta_k - x^\star\|^2 - \| x^\delta_{k+1} - x^\star \|^2
\ \geq \
\| w^\delta_k - x^\delta_{k+1} \|^2 + 2\la_k^{-1}
\norm{D^\delta_k} \big[ (q-\eta) \tau\delta - (\eta+1) \delta \big]
\ \geq \ 0$,}
\medskip

\noindent
for $k=0, \dots k^*$. Consequently, under the assumptions of
Proposition~\ref{prop:gainN}, if $(\alpha_k^\delta)$ satisfies 
\eqref{def:alphakdelta} then $x_k^\delta$, $w_k^\delta \in B_\rho(x_0)$
for $k=0, \dots k^*$ (the proof of this assertion follows the lines
of the proof of Proposition~\ref{prop:bound} and is omitted).
\end{corol}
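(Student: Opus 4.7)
The plan is to prove the displayed chain of inequalities first, and then use it to run an induction on $k$ for the boundedness claim, mirroring the scheme of Proposition~\ref{prop:bound} but with Assumption~\ref{ass:alpha} replaced by the noisy-data rule \eqref{def:alphakdelta}.

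For the inequalities, I would invoke Proposition~\ref{prop:gainN} directly. The fact that the loop of Algorithm~\ref{alg:init-noise} has not yet terminated at index $k$ means that the test in Step~[2] failed, i.e.\ $\norm{F(w_k^\delta) - y^\delta} > \tau\delta$; substituting this lower bound into the bracket on the right-hand side of Proposition~\ref{prop:gainN} yields the first displayed inequality. The choice $\tau > (\eta+1)(q-\eta)^{-1}$ imposed in Step~[1] rearranges precisely to $(q-\eta)\tau\delta - (\eta+1)\delta \geq 0$, and since $\norm{w_k^\delta - x_{k+1}^\delta}^2 \geq 0$, the outer ``$\geq 0$'' follows at once.

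For the boundedness statement, I would proceed by induction on the iteration index $l$. The base case $l=1$ is direct: $w_0^\delta = x_0 \in B_\rho(x_0)$ and (A3) give $\norm{w_0^\delta - x^\star} \leq \rho/2$, so the inequality just established yields $\norm{x_1^\delta - x^\star} \leq \rho/2$ and hence $x_1^\delta \in B_\rho(x_0)$ by the triangle inequality. The noisy analogue of Remark~\ref{rem:alpha}(a), which follows from \eqref{def:alphakdelta} exactly as in the exact-data case (as remarked in Remark~\ref{rem:alg2}), then puts $w_1^\delta$ in $B_\rho(x_0)$. The inductive step assumes $x_{l-1}^\delta, x_l^\delta \in B_\rho(x_0)$, obtains $w_l^\delta \in B_\rho(x_0)$ from \eqref{def:alphakdelta}, and applies the inequality once more to get $\norm{x_{l+1}^\delta - x^\star} \leq \norm{w_l^\delta - x^\star}$, from which $x_{l+1}^\delta \in B_\rho(x_0)$ must follow.

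The main obstacle, exactly as in Proposition~\ref{prop:bound}, is the passage from $\norm{x_{l+1}^\delta - x^\star} \leq \norm{w_l^\delta - x^\star}$ to $x_{l+1}^\delta \in B_\rho(x_0)$: a naive triangle inequality only delivers $\norm{x_{l+1}^\delta - x_0} \leq \norm{w_l^\delta - x_0} + 2\norm{x_0 - x^\star}$, which need not fall below $\rho$. The natural remedy, implicit in the appeal to Proposition~\ref{prop:bound}, is to strengthen the inductive hypothesis to $\norm{x_l^\delta - x^\star} \leq \norm{x_0 - x^\star} \leq \rho/2$, and to propagate this monotonicity through the step via Lemma~\ref{lemma:aux}, using the control $\alpha_l^\delta \norm{x_l^\delta - x_{l-1}^\delta}^2 \leq \theta_l$ furnished by \eqref{def:alphakdelta} to absorb the cross term; one can then conclude $x_{l+1}^\delta \in B_{\rho/2}(x^\star) \subset B_\rho(x_0)$.
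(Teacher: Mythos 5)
The first half of your write-up coincides with the paper's argument: for every index at which the loop body of Algorithm~\ref{alg:init-noise} is executed the stopping test has failed, so $\norm{F(w_k^\delta)-y^\delta}>\tau\delta$, and inserting this into the bracket of Proposition~\ref{prop:gainN} together with $\tau>(\eta+1)(q-\eta)^{-1}$ gives the displayed chain. Likewise, your use of the second entry of the min in \eqref{def:alphakdelta} (the noisy analogue of Remark~\ref{rem:alpha}(a)) to place $w_l^\delta$ in $B_\rho(x_0)$ once $x_l^\delta\in B_\rho(x_0)$ is exactly the intended mechanism, and your base case is fine.

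The problem is the repair you propose for the step ``$\norm{x_{l+1}^\delta-x^\star}\le\norm{w_l^\delta-x^\star}$ implies $x_{l+1}^\delta\in B_\rho(x_0)$''. You are right that this is the delicate point (the paper's Proposition~\ref{prop:bound}, to which the corollary defers, asserts it with no more detail than a bare ``therefore''), but the strengthened hypothesis $\norm{x_l^\delta-x^\star}\le\norm{x_0-x^\star}$ is not self-propagating. Lemma~\ref{lemma:aux} with $x=x^\star$ gives $\norm{w_l^\delta-x^\star}^2=\norm{x_l^\delta-x^\star}^2+\alpha_l^\delta\big(\norm{x_l^\delta-x^\star}^2-\norm{x_{l-1}^\delta-x^\star}^2\big)+\alpha_l^\delta(1+\alpha_l^\delta)\norm{x_l^\delta-x_{l-1}^\delta}^2$; even when the middle term is nonpositive, the last term is strictly positive whenever $\alpha_l^\delta>0$ and $x_l^\delta\neq x_{l-1}^\delta$, and \eqref{def:alphakdelta} only caps it by $2\theta_l$ — there is nothing negative for it to be ``absorbed'' into. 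So all one obtains is $\norm{x_{l+1}^\delta-x^\star}^2\le\norm{x_l^\delta-x^\star}^2+2\theta_l$: the distance to $x^\star$ can increase at every step, monotonicity is lost, and after $n$ steps the accumulated increase is of order $2\sum_{k\le n}\theta_k$, a quantity which none of (A1)--(A4) or \eqref{def:alphakdelta} ties to $\rho^2/4-\norm{x_0-x^\star}^2$. Hence the conclusion $x_{l+1}^\delta\in B_{\rho/2}(x^\star)$ does not follow from your induction, and as stated your proof of the boundedness assertion is incomplete (you have, in effect, rediscovered a point the source glosses over, but the patch introduces a claim that is false for the inertial iteration).
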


In the sequel we address the finiteness of the stopping index $k^*$
as defined in Step~[2.5] of Algorithm~\ref{alg:init-noise}.

\begin{propo} \label{prop:kstar}
Let (A1) -- (A4) hold and $(x_k^\delta)$, $(w_k^\delta)$ be sequences
generated by Algorithm~\ref{alg:init-noise} (with $(\la_k)$ and
$(\alpha_k^\delta)$ chosen as in Steps~[1] and [2.4] respectively).
Assume that $(\alpha_k^\delta)$ satisfies  \eqref{def:alphakdelta}.
If $\sum_k \la_k^{-1} = \infty$ the stopping index $k^*$ defined
in Step~[2.5] is finite. Additionaly, if $\la_k \leq \la_{\max}$ then

\centerline{$k^* \ \leq \ \la_{\max} \,
\Big( 2 q \tau\delta^2 \big[ (q-\eta)\tau - (\eta+1) \big] \Big)^{-1}
\Big[\rho^2 + 2 \sum_k \theta_k \Big]$.}
\end{propo}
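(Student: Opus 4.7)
The cornerstone will be the noisy gain estimate of Proposition~\ref{prop:gainN}. For every $k<k^*$ the discrepancy criterion in Step~[2] gives $\|F(w_k^\delta)-y^\delta\|>\tau\delta$, and the noisy analogue of Lemma~\ref{lemma:dk-and-ineq}(c) (available by Remark~\ref{rem:alg2}) yields $\|D_k^\delta\|\geq q\|F(w_k^\delta)-y^\delta\|>q\tau\delta$. Inserting both estimates into Proposition~\ref{prop:gainN} makes the bracket exceed $[(q-\eta)\tau-(\eta+1)]\delta$, which is strictly positive thanks to the choice $\tau>(\eta+1)/(q-\eta)$ in Step~[1]. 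This produces the uniform per-step decrease
$$\|w_k^\delta-x^\star\|^2-\|x_{k+1}^\delta-x^\star\|^2\ \geq\ \omega\,\lambda_k^{-1}, \qquad 0\leq k<k^*,$$
with $\omega:=2q\tau\delta^2[(q-\eta)\tau-(\eta+1)]>0$.

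Next I will expand $\|w_k^\delta-x^\star\|^2$ via the noisy-data version of Lemma~\ref{lemma:aux}. Setting $\varphi_k:=\|x_k^\delta-x^\star\|^2$, $\zeta_k:=\varphi_k-\varphi_{k-1}$ and $\eta_k:=\alpha_k^\delta\|x_k^\delta-x_{k-1}^\delta\|^2$ (with the conventions $x_{-1}^\delta:=x_0$, $\alpha_0^\delta:=0$, so that $\zeta_0=0$), the gain inequality rewrites as
$$\varphi_k-\varphi_{k+1}+\alpha_k^\delta\,\zeta_k+(1+\alpha_k^\delta)\,\eta_k\ \geq\ \omega\,\lambda_k^{-1}.$$
Summing from $k=0$ to $k^*-1$, the leading difference telescopes and is dominated by $\varphi_0\leq(\rho/2)^2\leq \rho^2$; the inertial perturbation satisfies $\sum_k(1+\alpha_k^\delta)\eta_k\leq 2\sum_k\theta_k$ by \eqref{def:alphakdelta}; and the non-telescoping cross term $\sum\alpha_k^\delta\zeta_k$ is controlled through the positive-part recursion borrowed from Proposition~\ref{prop:series}, namely $[\zeta_{k+1}]_+\leq\alpha_k^\delta[\zeta_k]_+ + 2\eta_k$, started from $[\zeta_1]_+=0$ (a consequence of $w_0^\delta=x_0$ combined with Corollary~\ref{cor:boundN} at $k=0$). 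After these bounds are collected, the inequality
$$\omega\sum_{k=0}^{k^*-1}\lambda_k^{-1}\ \leq\ \rho^2+2\sum_k\theta_k$$
emerges, with the slack in $\varphi_0\leq\rho^2$ absorbing the $(1-\alpha)^{-1}$-type constants produced by iterating the positive-part recursion.

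The two claims then follow quickly. For finiteness: if $k^*=\infty$, the hypothesis $\sum_k\lambda_k^{-1}=\infty$ would make the left-hand side of the collected estimate diverge, contradicting the finite right-hand side. For the quantitative bound, $\lambda_k\leq\lambda_{\max}$ gives $\sum_{k=0}^{k^*-1}\lambda_k^{-1}\geq k^*/\lambda_{\max}$, and rearranging delivers the claimed upper estimate on $k^*$. The principal technical obstacle is the handling of the cross term $\sum\alpha_k^\delta\zeta_k$, whose individual summands need not have a definite sign; this forces the use of the positive-part recursion from Proposition~\ref{prop:series}, and one has to verify that the resulting constants indeed fit within the clean bound $\rho^2+2\sum_k\theta_k$ stated in the proposition (which is permissible because $\varphi_0\leq(\rho/2)^2$ leaves a factor of four of room).
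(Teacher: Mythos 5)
Your overall strategy coincides with the paper's: the per-step estimate $\|w_k^\delta-x^\star\|^2-\|x_{k+1}^\delta-x^\star\|^2\ \geq\ 2q\tau\delta^2\big[(q-\eta)\tau-(\eta+1)\big]\lambda_k^{-1}$ obtained from Proposition~\ref{prop:gainN}, Lemma~\ref{lemma:dk-and-ineq}~(c) and the discrepancy criterion; the expansion of $\|w_k^\delta-x^\star\|^2$ via Lemma~\ref{lemma:aux}; and the summation that contradicts $\sum_k\lambda_k^{-1}=\infty$. Where you diverge is the cross term $\sum_k\alpha_k^\delta\zeta_k$. The paper does not import the positive-part recursion of Proposition~\ref{prop:series}; it instead isolates the partial sums $\sigma_n=\sum_{k\leq n}\alpha_k^\delta(\varphi_{k-1}-\varphi_k)$ and bounds them by $\varphi_0+2\sum_k\theta_k$ directly from the monotonicity $\|w_k^\delta-x^\star\|\geq\|x_{k+1}^\delta-x^\star\|$ of Corollary~\ref{cor:boundN} combined with Lemma~\ref{lemma:aux} --- a one-line telescoping that produces no $(1-\alpha)^{-1}$ amplification. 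Your route via $[\zeta_{k+1}]_+\leq\alpha_k^\delta[\zeta_k]_++2\eta_k$ is also legitimate and does yield a finite right-hand side, so the first assertion (finiteness of $k^*$) is fully proved by your argument.

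The gap is in the quantitative bound. Iterating your recursion from $[\zeta_1]_+=0$ gives $\sum_k[\zeta_k]_+\leq\tfrac{2}{1-\alpha}\sum_k\eta_k$, hence $\sum_k\alpha_k^\delta\zeta_k\leq\tfrac{2\alpha}{1-\alpha}\sum_k\theta_k$, and the collected estimate becomes $\omega\sum_{k<k^*}\lambda_k^{-1}\leq\varphi_0+\tfrac{2}{1-\alpha}\sum_k\theta_k$ rather than $\rho^2+2\sum_k\theta_k$. Your claim that the slack in $\varphi_0\leq\rho^2/4$ absorbs the $(1-\alpha)^{-1}$ factor is not correct: the excess is $\tfrac{2\alpha}{1-\alpha}\sum_k\theta_k$, which scales with $\sum_k\theta_k$ and blows up as $\alpha\to1$, and neither quantity is controlled by $\rho^2$. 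For $\alpha$ close to $1$, or $\sum_k\theta_k$ large relative to $\rho^2$, your constants exceed the stated ones. To recover the advertised bound you need the paper's sharper, $\alpha$-independent estimate of the cross term described above. (As a side remark, the paper's own bookkeeping is also loose at this point --- its final line actually delivers $2\rho^2+4\sum_k\theta_k$ on the right, i.e.\ twice the constant in the statement, and the sign convention in its definition of $\sigma_n$ does not match the term that appears after summation --- but the essential point stands: the cross term must be bounded by a telescoping argument, not by the geometric-series recursion.)
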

\begin{proof}
Recall that Lemma~\ref{lemma:aux} and Lemma~\ref{lemma:dk-and-ineq}
remain valid in the noisy data case (see Remark~\ref{rem:alg2}).
We claim that, if $k^*$ is not finite the sequence of partial sums
$(\sigma_n)$ defined by $\sigma_n := \sum_{k=0}^n \alpha_k^\delta
\big( \norm{x_{k-1}^\delta -x^\star}^2 - \norm{x_k^\delta -x^\star}^2
\big)$ is bounded (here $x_{-1}^\delta := x_0^\delta$).
Indeed, from Lemma~\ref{lemma:aux} (with
$x = x^\star$) and Proposition~\ref{prop:gainN} follow
\\[-4ex]
\begin{multline*}
(1+\alpha_k^\delta) \norm{x_k^\delta -x^\star}^2
- \alpha_k^\delta \norm{x_{k-1}^\delta -x^\star}^2
+ \alpha_k^\delta (1+\alpha_k^\delta)
\norm{x_k^\delta - x_{k-1}^\delta}^2 - \norm{x_{k+1} -x^\star}^2
\ = \\
= \ \norm{w_k^\delta - x^\star}^2 - \norm{x_{k+1}^\delta -x^\star}^2
\, \geq \, 0 \, , \ k \geq 0 \, .
\end{multline*}
\\[-3ex]
Thus, $\alpha_k^\delta \big( \norm{x_k^\delta -x^\star}^2
- \norm{x_{k-1}^\delta -x^\star}^2 \big)
+ \norm{x_k^\delta -x^\star}^2
+ 2 \alpha_k^\delta \norm{x_k^\delta - x_{k-1}^\delta}^2
- \norm{x_{k+1}^\delta -x^\star}^2 \geq 0$.
Consequently, $\alpha_k^\delta \big( \norm{x_{k-1}^\delta -x^\star}^2
- \norm{x_k^\delta - x^\star}^2 \big) \leq
\norm{x_k^\delta - x^\star}^2 - \norm{x_{k+1}^\delta -x^\star}^2
+ 2 \alpha_k^\delta \norm{x_k^\delta - x_{k-1}^\delta}^2$, for
$k \geq 0$.
Adding the last inequality for $k = 0, \dots n$, and using
\eqref{def:alphakdelta} we obtain

\begin{equation} \label{eq:sigmaN}
\sigma_n \ \leq \ \norm{x_0^\delta -x^\star}^2
- \norm{x_{n+1}^\delta -x^\star}^2 + 2
\summ_{k=0}^n \alpha_k^\delta \norm{x_k^\delta - x_{k-1}^\delta}^2
\ \leq \ \rho^2 + 2 \summ_{k=0}^\infty \theta_k \, .
\end{equation}
The boundedness of sequence $(\sigma_n)$ follows from the
summability of $(\theta_k)$, proving our claim.

For each $0 \leq k \leq k^*$ we derive from Proposition~%
\ref{prop:gainN} and Lemma~\ref{lemma:dk-and-ineq}~(c)
\begin{eqnarray*}
2\la^{-1}_k q \tau \delta^2 [ (q-\eta)\tau - (\eta+1) ]
& \leq & 2 \la^{-1}_k \norm{D_k^\delta}
  \big[ (q-\eta) \norm{y^\delta - F(w_k^\delta)} - (\eta+1)\delta \big]
  \\
& \leq & \|w^\delta_k - x^\star\|^2 - \|x^\delta_{k+1} - x^\star\|^2 .
\end{eqnarray*}
This inequality, Lemma~\ref{lemma:aux} (with $x = x^\star$),
\eqref{def:alphakdelta} and Corollary~\ref{cor:boundN} allow us
to conclude that
\begin{eqnarray*}
2\la^{-1}_k q \tau \delta^2 [ (q-\eta)\tau - (\eta+1) ]
& \leq & \|x^\delta_k - x^\star\|^2 - \|x^\delta_{k+1} - x^\star\|^2
  + \alpha_k^\delta \|x^\delta_k - x^\star\|^2
  \\
& & - \ \alpha_k^\delta \|x^\delta_{k-1} - x^\star\|^2
  +  \alpha_k^\delta (1+\alpha_k^\delta) \|x^\delta_k - x^\delta_{k-1}\|^2
  \\
& \leq & \|x^\delta_k - x^\star\|^2 - \|x^\delta_{k+1} - x^\star\|^2
  + \alpha_k^\delta \big( \|x^\delta_k - x^\star\|^2
  - \|x^\delta_{k-1} - x^\star\|^2 \big) + 2\theta_k
\end{eqnarray*}
for $0 \leq k \leq k^*$. Summing up the last inequality for $k = 0,
\dots, n$ with $n \leq k^*$ gives us
\begin{equation} \label{eq:finitek}
2q \tau \delta^2 \big[ (q-\eta)\tau - (\eta+1) \big]
\summ_{k=0}^n \la^{-1}_k \ \leq \
\| x_0 - x^\star \|^2 + \sigma_n + 2\summ_{k=0}^n \theta_k \, .
\end{equation}

If $k^*$ is not finite, it follows from the boundedness of $(\sigma_n)$
and the summability of $(\theta_k)$ that the right hand side of
\eqref{eq:finitek} is bounded. However, this contradicts the
assumption $\sum_k \la_k^{-1} = \infty$. Thus, $k^*$ has to be
finite.
To prove last assertion, note that the additional
assumption $\la_k \leq \la_{\max}$ together with \eqref{eq:finitek}
and \eqref{eq:sigmaN} imply 
$2q \tau\delta^2 \big[(q-\eta)\tau - (\eta+1)\big] \la^{-1}_{\max}
\, k^* \leq 2 \rho^2 + 4 \sum_k \theta_k$, concluding the proof.
\end{proof}

In the sequel we present the main results of this section, namely a
stability result (see Theorem~\ref{th:stabil}) and a semi-convergence
result (see Theorem~\ref{th:semi-conv}).

\begin{theorem}[Stability] \label{th:stabil}
Let (A1) hold, $(\delta^j)$ be a sequence of positive numbers
converging to zero and $(y^{\delta^j})$ be a sequence of noisy data
satisfying $\norm{y^{\delta^j} - y} \leq \delta^j$, where $y \in$
Rg$(F)$.
For each $j \in \N$, let $(x_l^{\delta^j})_{l=0}^{k^*_j+1}$
and $(w_l^{\delta^j})_{l=0}^{k^*_j}$ be the corresponding sequences
generated by Algorithm~\ref{alg:init-noise}, with $(\la_k)$ and
$(\alpha_k^{\delta^j})$ chosen as in Steps~[1] and [2.4] respectively
(here $k^*_j$ represent the stopping indices defined in Step~[2.5]).
Additionaly, assume that $(\alpha_k^{\delta^j})$ complies with
\eqref{def:alphakdelta}.

Let $(x_k)$ and $(w_k)$ be the sequences generated by
Algorithm~\ref{alg:init-exact} with $(\alpha_k)$ satisfying
Assumption~\ref{ass:alpha}. For each $k \geq 0$ it holds
\begin{equation} \label{eq:stabil}
\lim_{j\to\infty} x_k^{\delta^j} = x_k \quad {\it and} \quad
\lim_{j\to\infty} w_k^{\delta^j} = w_k
\end{equation}
(in view of Remark~\ref{rem:station}, if $(x_k)$ and $(w_k)$
are finite then the first limit in \eqref{eq:stabil} holds for
$k=0, \dots, k_0+1$, while the second limit holds for $k=0, \dots,
k_0$).%
\footnote{In this case $x_{k_0+1} = w_{k_0}$ and $F(w_{k_0})=y$.}
\end{theorem}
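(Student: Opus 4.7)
The plan is to proceed by strong induction on $k$, with inductive statement: \emph{for $j$ sufficiently large, $k^*_j \geq k$ (so that $x_k^{\delta^j}$ and $w_k^{\delta^j}$ are defined), and $x_k^{\delta^j} \to x_k$, $w_k^{\delta^j} \to w_k$ as $j \to \infty$.} The base case $k=0$ holds trivially because both algorithms initialize with $w_0 = x_0$. All iterates remain inside $B_\rho(x_0)$ by Proposition~\ref{prop:bound} and Corollary~\ref{cor:boundN}, so continuity of $F$ and $F'$ is available throughout.

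For the inductive step I would distinguish two cases mirroring those of Theorem~\ref{th:conv}. \emph{Case A:} $F(w_k) \neq y$. Then $\norm{F(w_k^{\delta^j}) - y^{\delta^j}} \to \norm{F(w_k)-y} > 0$ by continuity while $\tau\delta^j \to 0$, so for $j$ large the discrepancy check of Algorithm~\ref{alg:init-noise} is passed, giving $k^*_j \geq k+1$ and producing $x_{k+1}^{\delta^j}$ via Step~[2.2]. Since $A_k^{\delta^j} = F'(w_k^{\delta^j}) \to F'(w_k) = A_k$ in operator norm, and the operators $G_k^{\delta^j} := (A_k^{\delta^j})^*A_k^{\delta^j} + \la_k I$ are uniformly coercive with $\norm{(G_k^{\delta^j})^{-1}} \leq 1/\la_k$, the standard perturbation identity $G_k^{-1} - (G_k^{\delta^j})^{-1} = G_k^{-1}(G_k^{\delta^j} - G_k)(G_k^{\delta^j})^{-1}$ yields $s_k^{\delta^j} \to s_k$, and hence $x_{k+1}^{\delta^j} \to x_{k+1}$.

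Passing from $x_{k+1}^{\delta^j}$ to $w_{k+1}^{\delta^j}$ requires analyzing \eqref{def:alphakdelta}. If $x_{k+1} \neq x_k$, the denominators in the $\min$ stay bounded away from $0$ for $j$ large, so $\alpha_{k+1}^{\delta^j} \to \alpha_{k+1}$ by continuity and $w_{k+1}^{\delta^j} \to w_{k+1}$. In the degenerate subcase $x_{k+1} = x_k$, one has $\alpha_{k+1} = 0$ and $w_{k+1} = x_{k+1}$; although $\alpha_{k+1}^{\delta^j}$ need not tend to $0$, the cap $\alpha_{k+1}^{\delta^j} \leq \alpha$ combined with $\norm{x_{k+1}^{\delta^j} - x_k^{\delta^j}} \to 0$ is enough to annihilate the inertial increment, yielding $w_{k+1}^{\delta^j} \to w_{k+1}$ nonetheless. \emph{Case B:} $F(w_k) = y$, which occurs only at $k = k_0$ in the finite-termination setting, where $x_{k+1} = w_k$. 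Depending on $j$, either $k^*_j = k$ and Step~[2.5] sets $x_{k+1}^{\delta^j} := w_k^{\delta^j}$, or $k^*_j > k$ and $x_{k+1}^{\delta^j} = w_k^{\delta^j} + s_k^{\delta^j}$; but $\norm{y^{\delta^j} - F(w_k^{\delta^j})} \to 0$ forces $s_k^{\delta^j} \to 0$ via the uniform bound on $(G_k^{\delta^j})^{-1}$, so both branches satisfy $x_{k+1}^{\delta^j} \to w_k = x_{k+1}$.

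The main technical obstacle is the intrinsic discontinuity of formula \eqref{def:alphakdelta} at points where consecutive iterates coincide; this is defused by observing that the $\alpha$-cap prevents $\alpha_{k+1}^{\delta^j}$ from blowing up while the vanishing increment $x_{k+1}^{\delta^j} - x_k^{\delta^j}$ makes the precise limit value of $\alpha_{k+1}^{\delta^j}$ irrelevant. A secondary bookkeeping concern is that the noisy iteration may terminate at or before the exact iteration's stopping index; here the discrepancy principle, combined with the uniform bound on the inverse operator, ensures both possible branches of Algorithm~\ref{alg:init-noise} collapse to the same exact-iteration limit.
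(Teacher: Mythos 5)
Your proposal is correct and follows essentially the same route as the paper's proof: induction on $k$, convergence of the regularized LM step via the uniform bound $\|(G_k^{\delta^j})^{-1}\|\le 1/\la_k$, and the same two-case treatment of the inertial parameter (convergence of $\alpha_{k+1}^{\delta^j}$ to $\alpha_{k+1}$ when $x_{k+1}\neq x_k$, and the cap $\alpha_{k+1}^{\delta^j}\le\alpha$ combined with the vanishing increment $x_{k+1}^{\delta^j}-x_k^{\delta^j}\to 0$ in the degenerate case). You are in fact somewhat more explicit than the paper on two points it leaves implicit, namely that the discrepancy test guarantees $k^*_j\ge k+1$ for large $j$ whenever $F(w_k)\neq y$, and that both possible branches of the noisy algorithm at the finite-termination index $k_0$ yield $x_{k_0+1}^{\delta^j}\to w_{k_0}=x_{k_0+1}$.
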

\begin{proof}
We give an inductive proof. In what follows we adopt
the simplifying notation $A_k^j := F'(w_k^{\delta^j})$. 
Notice that $w_0^{\delta^j} = w_0 = x_0 = x_0^{\delta^j}$ for
all $j \in \N$. Thus, \eqref{eq:stabil} holds for $k = 0$.
Next, assume the existence of $(x_{l})_{l\leq k}$ and
$(w_{l})_{l\leq k}$ generated by Algorithm~\ref{alg:init-exact}
(and corresponding $(\alpha_l)_{l\leq k}$ satisfying Assumption~%
\ref{ass:alpha}) such that $\lim_j x_l^{\delta^j} = x_l$ and
$\lim_j w_l^{\delta^j} = w_l$, for $l = 0, \dots, k$.

If $F(w_k)=y$ then $k_0 := k$, $x_{k_0+1} = w_{k_0}$ and
\eqref{eq:stabil} holds only for a finite number of indexes (i.e.
$(x_k)$ and $(w_k)$ are finite).
If $F(w_k) \not= y$, it follows from Algorithms~\ref{alg:init-exact}
and~\ref{alg:init-noise} that \\[1ex]
\centerline{$s_k=( A_k^*A_k + \la_kI)^{-1} A_k^*(y-F(w_k))$ \ and \
$s^{\delta^j}_k = [ (A_k^j)^*A_k^j + \la_k I ]^{-1}
(A_k^j)^* (y^{\delta^j} - F(w^{\delta^j}_k))$.} \\[1ex]
Thus (A1), the assumption $\lim_j y^{\delta^j} = y$, the inductive
assumption $\lim_j w_k^{\delta^j} = w_k$, and the fact
$\min \big\{ \norm{A_k^*A_k + \la_kI} , \, \min_j
\norm{(A_k^j)^*A_k^j + \la_kI} \big\} \geq \la_k > 0$ allow
us to conclude that $\lim_j s_k^{\delta^j} = s_k$. Consequently,
\begin{equation} \label{eq:Lxkm1}
\lim_{j\to\infty} \, x_{k+1}^{\delta^j}
\ = \ \lim_{j\to\infty} \, (w_k^{\delta^j} + s_k^{\delta^j})
\ = \ w_k + s_k \ = \ x_{k+1} \, .
\end{equation}
At this point, two distinct cases must be considered: \\
{\bf Case I:} $x_{k+1} \not= x_k$. Choose $\alpha_{k+1}$ according
to Assumption~\ref{ass:alpha}. From $\lim_j x_k^{\delta^j} = x_k$,
\eqref{eq:Lxkm1} and \eqref{def:alphakdelta} it follows that
$\lim_j \alpha^{\delta^j}_{k+1} = \alpha_{k+1}$.
Defining $w_{k+1}$ as in Step~[2.3] of Algorithm~\ref{alg:init-exact},
we conclude that
$$
\lim_{j\to\infty} w_{k+1}^{\delta^j} \ = \
\lim_{j\to\infty} \big( x_{k+1}^{\delta^j}
  + \alpha_{k+1}^{\delta^j} (x_{k+1}^{\delta^j} - x_k^{\delta^j}) \big)
\ = \ x_{k+1} + \alpha_{k+1}(x_{k+1} - x_k) \ = \ w_{k+1} \, .
$$
{\bf Case II:} $x_{k+1} = x_k$. Assumption~\ref{ass:alpha} implies
$\alpha_{k+1} = 0$. Define $w_{k+1}$ as in Step~[2.1] of Algorithm~%
\ref{alg:init-exact} (in this case $w_{k+1} = x_{k+1}$).
From $\lim_j x_k^{\delta^j} = x_k$, $\sup_j \alpha_{k+1}^{\delta^j}
\leq \alpha$, \eqref{eq:Lxkm1}, and Step~[2.4] of Algorithm~%
\ref{alg:init-noise}, it follows that
$$
\lim_{j\to\infty} w_{k+1}^{\delta^j} \ = \
\lim_{j\to\infty} \big[ x_{k+1}^{\delta^j} + \alpha_{k+1}^{\delta^j}
     ( x_{k+1}^{\delta^j} - x_k^{\delta^j} ) \big]
     \ = \ x_{k+1} \ = \ w_{k+1} \, .
$$
Thus, in either case it holds $\lim_j w_{k+1}^{\delta^j} = w_{k+1}$,
concluding the inductive proof.
\end{proof}

\begin{theorem}[Semi-convergence] \label{th:semi-conv}
Let (A1) - (A6) hold, $(\delta^j)$ be a sequence of positive numbers
converging to zero, and $(y^{\delta^j})$ be a sequence of noisy data
satisfying $\norm{y^{\delta^j} - y} \leq \delta^j$, where $y \in$
Rg$(F)$.
For each $j\in\N$, let $(x_l^{\delta^j})_{l=0}^{k^*_j+1}$ and
$(w_l^{\delta^j})_{l=0}^{k^*_j}$ be sequences generated by
Algorithm~\ref{alg:init-noise}, with $(\la_l)$ and
$(\alpha_l^{\delta^j})$ chosen as in Steps~[1] and [2.4]
respectively, and $(\alpha_l^{\delta^j})_{l=0}^{k^*_j}$ complying
with \eqref{def:alphakdelta} (here, $k^*_j = k^*(\delta^j, y^j)$
is the stopping index defined in Step~[2.5]).

The sequence $(x_{k^*_j}^{\delta^j})_j$ converges strongly to some
$\bar{x} \in B_\rho(x_0)$, such that $F(\bar{x}) = y$.
\end{theorem}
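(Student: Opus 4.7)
The plan is to reduce to two scenarios via subsequence extraction and combine Theorem~\ref{th:stabil} with a quasi-F\'ejer monotonicity analysis analogous to Proposition~\ref{prop:series}. After passing to a subsequence we may assume that either (a) the stopping indices $(k^*_j)$ are bounded and, after further extraction, equal to some constant $K$; or (b) $k^*_j \to \infty$.

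In case (a), Theorem~\ref{th:stabil} applied at the fixed index $K$ yields $x_K^{\delta^j}\to x_K$ and $w_K^{\delta^j}\to w_K$; the discrepancy stopping rule $\norm{F(w_K^{\delta^j})-y^{\delta^j}}\leq\tau\delta^j$, passed to the limit via continuity of $F$ and $y^{\delta^j}\to y$, forces $F(w_K)=y$. Consequently the exact-data iteration also terminates at index $K$ (see Remark~\ref{rem:station}), so the limit $\bar x$ lies in $B_\rho(x_0)$ and is tied to a solution of $F(x)=y$.

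Case (b) is the substantial one. Theorem~\ref{th:conv} supplies $\bar x \in B_\rho(x_0)$ with $F(\bar x) = y$ and $x_k \to \bar x$, $w_k \to \bar x$. The key step is to reproduce the quasi-F\'ejer recurrence of Proposition~\ref{prop:series} in the noisy regime. Applying Corollary~\ref{cor:boundN} with $x^\star = \bar x$ together with Lemma~\ref{lemma:aux} at $x = \bar x$ (both remain valid in the noisy setting, see Remark~\ref{rem:alg2}), and writing $\varphi_k^\delta := \norm{x_k^\delta - \bar x}^2$, $\zeta_k^\delta := \varphi_k^\delta - \varphi_{k-1}^\delta$, one derives
\[
[\zeta_{k+1}^\delta]_+ \ \leq\ \alpha_k^\delta\,[\zeta_k^\delta]_+ + 2\theta_k, \qquad 0 \leq k < k^*,
\]
just as in the exact-data argument, since $\alpha_k^\delta \norm{x_k^\delta - x_{k-1}^\delta}^2 \leq \theta_k$ by~\eqref{def:alphakdelta}. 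Iterating this from any fixed index $N$ and summing produces the $\delta$-independent tail estimate
\[
\varphi_k^\delta \ \leq\ \varphi_N^\delta + \tfrac{1}{1-\alpha}[\zeta_N^\delta]_+ + \tfrac{2}{1-\alpha}\summ_{l \geq N}\theta_l, \qquad N \leq k \leq k^*.
\]

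To conclude case (b), fix $\varepsilon > 0$; choose $N$ so large that $\varphi_N \leq \varepsilon$, $[\zeta_N]_+ \leq \varepsilon$ (possible since $\varphi_k \to 0$ in the exact case, and hence $\zeta_k \to 0$) and $\summ_{l \geq N}\theta_l \leq \varepsilon$. Theorem~\ref{th:stabil} applied at the fixed step $N$ gives, for $j$ sufficiently large, $\varphi_N^{\delta^j} \leq 2\varepsilon$ and $[\zeta_N^{\delta^j}]_+ \leq 2\varepsilon$. Since $k^*_j \to \infty$, eventually $k^*_j \geq N$, and the estimate above delivers $\varphi_{k^*_j}^{\delta^j} \leq C\varepsilon$ with $C$ independent of $j$ and $\varepsilon$, whence $x_{k^*_j}^{\delta^j} \to \bar x$. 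The main obstacle is precisely this uniform propagation of the quasi-F\'ejer estimate over the unbounded range $[N, k^*_j]$; it works because the recurrence coefficient satisfies $\alpha_k^\delta \leq \alpha < 1$ and the forcing term is dominated by $\theta_k$, both bounds being independent of $\delta$.
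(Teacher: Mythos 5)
Your proposal is correct and follows the same overall architecture as the paper's proof: pass to subsequences, split according to whether the stopping indices $(k^*_j)$ stay bounded or diverge, settle the bounded case with Theorem~\ref{th:stabil} plus the discrepancy inequality $\norm{F(w_K^{\delta^j})-y^{\delta^j}}\le\tau\delta^j$, and settle the divergent case by propagating a $\delta$-independent tail estimate from a fixed index up to $k^*_j$. The only genuine difference is the mechanism of that propagation. The paper works with the plain distances: Proposition~\ref{prop:gainN} gives $\norm{x_{k+1}^{\delta}-\bar x}\le\norm{w_k^{\delta}-\bar x}$, the triangle inequality gives $\norm{w_k^{\delta}-\bar x}\le\norm{x_k^{\delta}-\bar x}+\alpha_k^{\delta}\norm{x_k^{\delta}-x_{k-1}^{\delta}}$, and the \emph{second} entry in the minimum of \eqref{def:alphakdelta} bounds $\alpha_k^{\delta}\norm{x_k^{\delta}-x_{k-1}^{\delta}}\le\theta_k$, so a one-line telescoping yields $\norm{x_{k^*_j}^{\delta^j}-\bar x}\le\norm{x_K^{\delta^j}-\bar x}+\sum_{k\ge K}\theta_k$. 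You instead run the squared-distance quasi-F\'ejer recurrence $[\zeta_{k+1}^{\delta}]_+\le\alpha_k^{\delta}[\zeta_k^{\delta}]_++2\theta_k$ (built from Lemma~\ref{lemma:aux} and Corollary~\ref{cor:boundN} with $x^*=\bar x$, using the \emph{first} entry of the minimum) and sum the geometric iteration; this costs an extra factor $(1-\alpha)^{-1}$ and requires stability at the two indices $N-1$ and $N$ in order to control $[\zeta_N^{\delta^j}]_+$, but all the ingredients are indeed available in the noisy setting, so the argument closes. Both routes share the same two implicit points: in the bounded case the object actually identified with $\bar x$ is $x_{n+1}=w_n$ rather than $x_n$ (harmless if the final approximation is read as $w_{k^*_j}^{\delta^j}=x_{k^*_j+1}^{\delta^j}$), and the divergent case tacitly assumes the exact-data iteration does not terminate after finitely many steps, since otherwise the tail $\sum_{l\ge N}\theta_l$ cannot be made small while keeping $N$ within the range covered by Theorem~\ref{th:stabil}.
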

\begin{proof}
Let $(x_k)$, $(w_k)$ be sequences generated by Algorithm~%
\ref{alg:init-exact} with exact data $y$ and $(\alpha_k)$ 
satisfying Assumption~\ref{ass:alpha}.
Since (A1) - (A6) hold, it follows from Theorem~\ref{th:conv}
the existence of $\bar{x} \in B_\rho(x_0)$, solution of $F(x) = y$,
s.t. $\lim_k x_k = \lim_k w_k = \bar{x}$.
We aim to prove that $\lim_j x_{k^*_j}^{\delta^j} = \bar{x}$.
It suffices to prove that every subsequence of
$(x_{k^*_j}^{\delta^j})_j$ has itself a subsequence converging
strongly to $\bar{x}$.

Denote an arbitrary subsequence of $(x_{k^*_j}^{\delta^j})_j$
again by $(x_{k^*_j}^{\delta^j})_j$, and represent by
$(k^*_j)_j \in \N$ the corresponding subsequence of indices.
Two cases are considered:
\medskip

\noindent {\bf Case~1.}
$(k^*_j)_j$ has a finite accumulation point. \\
In this case, we can extract a subsequence $(k^*_{j_m})$ of $(k^*_j)$
such that $k^*_{j_m} = n$, for some $n \in \N$ and all indices $j_m$.
Applying Theorem~\ref{th:stabil} to $(\delta^{j_m})$ and
$(y^{\delta^{j_m}})$, we conclude that
$w_{k^*_{j_m}}^{\delta^{j_m}} = w_n^{\delta^{j_m}} \to w_n$ and
$x_{k^*_{j_m}+1}^{\delta^{j_m}} = x_{n+1}^{\delta^{j_m}} \to x_{n+1}$,
as $j_m \to \infty$.
We claim that $F(w_n) = y$. Indeed, $\norm{F(w_n) - y} =
\lim_{j_m} \norm{F(w_n^{\delta^{j_m}})-y} \leq
\lim_{j_m} \big( \norm{F(w_n^{\delta^{j_m}}) - y^{\delta^{j_m}}}
+ \norm{y^{\delta^{j_m}} - y} \big) \leq
\lim_{j_m} (\tau+1) \,\delta^{j_m} = 0$.
I.e. in this case, the second
assertion of Theorem~\ref{th:conv} holds with $k_0 = n$. Thus,
$x_{n+1} = w_n = \bar{x}$.
\medskip

\noindent {\bf Case~2.} \
$(k^*_j)_j$ has no finite accumulation point. \\
In this case we can extract a monotone strictly increasing subsequence,
again denoted by $(k^*_j)_j$.
Take $\varepsilon > 0$. From Theorem~\ref{th:conv} follows the
existence of $K_1 = K_1(\varepsilon) \in \N$ such that
\begin{equation} \label{eq:eps3-1}
\norm{x_k - \bar{x}} \ < \
\T\frac13 \varepsilon \, , \ k \geq K_1 \, .
\end{equation}
Since $\sum_k \theta_k$ is finite (see Assumption~\ref{ass:alpha}),
there exists $K_2 = K_2(\varepsilon) \in \N$ such that
\begin{equation} \label{eq:eps3-2}
\summ_{k\geq K_2} \theta_k \ \leq \ \frac13 \varepsilon \, .
\end{equation}
Define $K = K(\varepsilon) := \max\{ K_1 , K_2 \}$.
Due to the monotonicity of $(k^*_j)_j$, there exists $J_1 \in \N$
such that $k^*_j > K$ for $j \geq J_1$.

Theorem~\ref{th:stabil} applied to the subsequences $(\delta^j)_j$
and $(y^{\delta^j})_j$, corresponding to $(k^*_j)_j$, implies the
existence of $J_2 \in \N$ s.t.
\begin{equation} \label{eq:eps3-3}
\norm{x_K^{\delta^j} - x_{K}} \ \leq \ \T\frac13 \varepsilon
\, , \ j > J_2 \, .
\end{equation}
Set $J := \max \{ J_1, J_2 \}$.
From Proposition~\ref{prop:gainN} (with $x^* = \bar{x}$) and Step~[2.4]
of Algorithm~\ref{alg:init-noise} follow
$\norm{x_{k+1}^{\delta^j} - \bar{x}} \leq \norm{w_k^{\delta^j} - \bar{x}}
\leq \norm{x_k^{\delta^j} - \bar{x}} + \alpha_k^{\delta^j}
\norm{x_k^{\delta^j} - x_{k-1}^{\delta^j}}$, for $j \geq J$ and
$k = 0, \dots, k^*_j - 1$.
\noindent Consequently,
\begin{equation} \label{eq:telescopic}
\norm{x_{k+1}^{\delta^j} - \bar{x}} -
\norm{x_k^{\delta^j} - \bar{x}} \ \leq \
\alpha_k^{\delta^j} \norm{x_k^{\delta^j} - x_{k-1}^{\delta^j}} \, ,
\end{equation}
for $j \geq J$ and $k = 0, \dots, k^*_j - 1$. 
Adding \eqref{eq:telescopic} for $k = K, \dots, k^*_j-1$ we obtain
\smallskip

\centerline{$\norm{x_{k^*_j}^{\delta^j} - \bar{x}} \ \leq \
\norm{x_K^{\delta^j} - \bar{x}} \ + \
\summ_{k=K}^{k^*_j-1} \alpha_k^{\delta^j}
\norm{x_k^{\delta^j} - x_{k-1}^{\delta^j}} \, , \ j \geq J$.}
\smallskip

\noindent
Thus, arguing with \eqref{def:alphakdelta}, together with
\eqref{eq:eps3-1}, \eqref{eq:eps3-2} and \eqref{eq:eps3-3}, we obtain
\smallskip

\centerline{$
\norm{x_{k^*_j}^{\delta^j} - \bar{x}}
\ \leq \
  \norm{x_K^{\delta^j} - \bar{x}} + \summ_{k=K}^{k^*_j-1} \theta_k
\ \leq \
  \norm{x_K^{\delta^j} - x_K} + \norm{x_K - \bar{x}}
  + \summ_{k\geq K} \theta_k
\ \leq \
  \frac13\varepsilon + \frac13\varepsilon + \frac13\varepsilon$,}
\smallskip

\noindent
for $j \geq J$. Repeating the above argument for $\varepsilon = 1,
\frac12, \frac13, \dots$ we  generate a sequence of indices
$j_1 < j_2 < j_3 < \dots$ such that
$\norm{x_{k^*_{j_m}}^{\delta^{j_m}} - \bar{x}} \leq
\frac1m$, for $m \in \N$.
This concludes Case~2, and completes the proof of the theorem.
\end{proof}

\section{Numerical experiments} \label{sec:numerics}

In this section two distinct ill-posed problems are used to investigate
the numerical efficiency of the inLM method.

\subsection{Parameter identification in an elliptic PDE} 

We aim to identify the coefficient $c \geq 0$ in the elliptic PDE on the unit square $\Omega = (0,1)^2$
with Dirichlet boundary condition
\begin{align}
\label{eqn:PDE_reco_problem_continuous}
-\Delta u+cu=g, \ \text{ in } \Omega
\quad\quad
u=\bar u \ \text{ on } \partial \Omega
\end{align}
from the knowledge of $u$ on the full domain $S\subset \Omega$. Here
the right-hand side $g \in L^2(\Omega)$ and boundary conditions
$\bar u\in H^{3/2}(\Omega)$ are known. This is a typical benchmark
inverse problem, see~\cite[Example 4.2]{HanNeuSch95}. If $u$ has no
zeroes in $\Omega$, then $c$ can be recovered explicitly by 
\begin{align}
\label{eqn:c_from_simple_division}
c = (g+\Delta u)/u.
\end{align}
However, for $u$ given with noise, this operation is expected to be
unstable as the application of the Laplacian is ill-conditioned. We
rearrange and discretize \eqref{eqn:PDE_reco_problem_continuous} with
a uniform grid of size $n\times n$ and use the standard five-point
stencil $\Delta_n$ with fineness $1/n$ in both dimensions as a
discretization of the Laplacian $\Delta$. We associate functions on
$\Omega$ with column vectors by assembling its values on the grid
and traversing row-wise. We state the discretized inverse problem
as asking for a reconstruction of $c^\delta\in\mathbb R^{n^2}$ from
a given noisy solution $u^\delta\in\mathbb R^{n^2}$ and a vector
$z\in\mathbb R^{n^2}$, which contains the discretized right-hand
side $g$ as well as boundary conditions $\bar u$ from
\eqref{eqn:PDE_reco_problem_continuous}, such that
\begin{align}
\label{eqn:PDE_reco_problem_discrete}
F(c^\delta) := \big( -\Delta_n + \mathrm{diag}(c^\delta) \big)^{-1}(z) = u^\delta,
\end{align}
where $\mathrm{diag}(c^\delta)$ denotes the diagonal matrix
with entries from $c^\delta$.
The mapping $F$ from \eqref{eqn:PDE_reco_problem_discrete} is
known to fulfill assumption (A2) locally.
For $g$ and $\bar u$ we set
\begin{align*}
g(x,y) &= 200 \cdot e^{-10\big(x-\tfrac{1}{2}\big)^2-10\big(y-\tfrac{1}{2}\big)^2}, \\
\bar u(x,y) &= 0, \qquad (x,y) \in \partial\Omega
\end{align*}
and $c^\dagger = c^\dagger_0 \ast \varphi$ with
\begin{align*}
c^\dagger_0(x,y) &= 
\begin{cases}
10, &  \text{if\ } \min\big( \ \sqrt{(x-0.25)^2+(y-0.5)^2}, \ \sqrt{(x-0.75)^2+(y-0.5)^2} \ \big) < \frac{1}{10}, \\
0, & \text{otherwise,}
\end{cases} \\
\varphi(x,y) &= \frac{1}{20\sqrt{\pi}} \cdot e^{-\frac{x^2+y^2}{200}}, \qquad x,y\in(0,1)
\end{align*}
and compute $u^\delta$ by a forward evaluation of the operator
$F$ from \eqref{eqn:PDE_reco_problem_discrete}. We choose $n=100$,
which means that all vectors have size $10^4$. We approximate the
linear solve in step [2.1] of Algorithm~\ref{alg:init-noise} by
two steps of the conjugate gradient method with initial value
zero for the $s$-variable.

We first examine the noiseless case. We depict our chosen $c^\dagger$
and the corresponding solution~$u^\dagger$ of the forward problem
in Figure~\ref{fig:noiseless_exact_solutions}. The vector~$u^\dagger$
is everywhere nonzero and hence we could recover $c^\dagger$ exactly
by division as in \eqref{eqn:c_from_simple_division}. The rightmost
subfigure in Figure~\ref{fig:noiseless_exact_solutions} shows that
this is perfectly possible in our case. Nevertheless, we test how
well the iterates $w^k$ of Algorithm~\ref{alg:init-exact} are able
to approximate the coefficient $c^\dagger$. We choose $w_0 = 0$
and consider $\alpha = k/10$ for $k=0,2,...,10$, where $\alpha=0$
corresponds to the non-accelerated Levenberg-Marquardt method and
$\alpha=1$ is not covered by our theory. 
To investigate convergence, we keep track of the residuals
$\|F(w_k)-u^\dagger\|$ and the distances $\|c_k-c^\dagger\|$.
The corresponding results can be seen in
Figure~\ref{fig:noiseless_res_and_distance}. We observe that all
methods converge, where convergence is faster for larger acceleration
parameters $\alpha$ except for $\alpha=1$.
In Figure~\ref{fig:noiseless_recos_10iter} we see that after 10
iterations, larger values of $\alpha$ proceed much faster in
reconstruction and $\alpha=1$ gives the best guess.
Figure~\ref{fig:noiseless_recos_500iter} shows that after 500
iterations the reconstructions look decent for $\alpha<1$, but
the peak shape is not fit for $\alpha=1$.

Next, we add $1\%$ of relative noise to the forward solution
$u^\dagger$, which yields a noisy vector~$u^\delta$ with no
visible difference from $u^\dagger$ (left subfigures in Figure
\ref{fig:noiseless_exact_solutions} and
Figure~\ref{fig:noisy_u_noisy_and_c_naive}). Here, the naive
calculation of $c^\delta$ by \eqref{eqn:c_from_simple_division}
fails drastically, as one sees from the right subfigure in
Figure~\ref{fig:noisy_u_noisy_and_c_naive}. We compute
reconstructions using Algorithm~\ref{alg:init-noise}, where
we again initialize by $w_0=0$ and choose $\alpha = k/10$ for
$k=0,2,...,10$. Figure~\ref{fig:noisy_res_and_distance} shows
the typical semi-convergence phenomenon. As one can see, the
closest distance to the true coefficient is achieved earlier
for larger values of~$\alpha$, which even includes $\alpha=1$.
We illustrate stopping by Morozov's discrepancy principle with
the horizontal line in the right subfigure of
Figure~\ref{fig:noisy_res_and_distance} and with bullet points
on the graphs, where we set $\tau=1$. From both
Figure~\ref{fig:noisy_res_and_distance} (left subfigure) and
from Figure~\ref{fig:noisy_recos_Morozov} one can see that
stopping happens too early even though we set $\tau=1$.
Indeed, we see that for $\alpha<1$ the residuals decay even
slightly below the absolute noise level, where the approached
residual value does not depend of the concrete value of
$\alpha<1$. After 100 iterations, the reconstruction looks
decent for $\alpha\leq 0.8$, but breaks down for $\alpha=1$,
see Figure~\ref{fig:noisy_recos_100_iter}. In
Figure~\ref{fig:noisy_recos_100_iter_closest} we see that the
reconstruction at the respective iterations where $w_k$ is
closest to $c^\dagger$ in Euclidean norm do not look different
for varying $\alpha$ (cf. Figure~\ref{fig:noisy_res_and_distance}).

 \begin{figure}[H]
 \begin{minipage}{.32\textwidth}
 \hspace{-1cm}
 \includegraphics[scale=0.4]{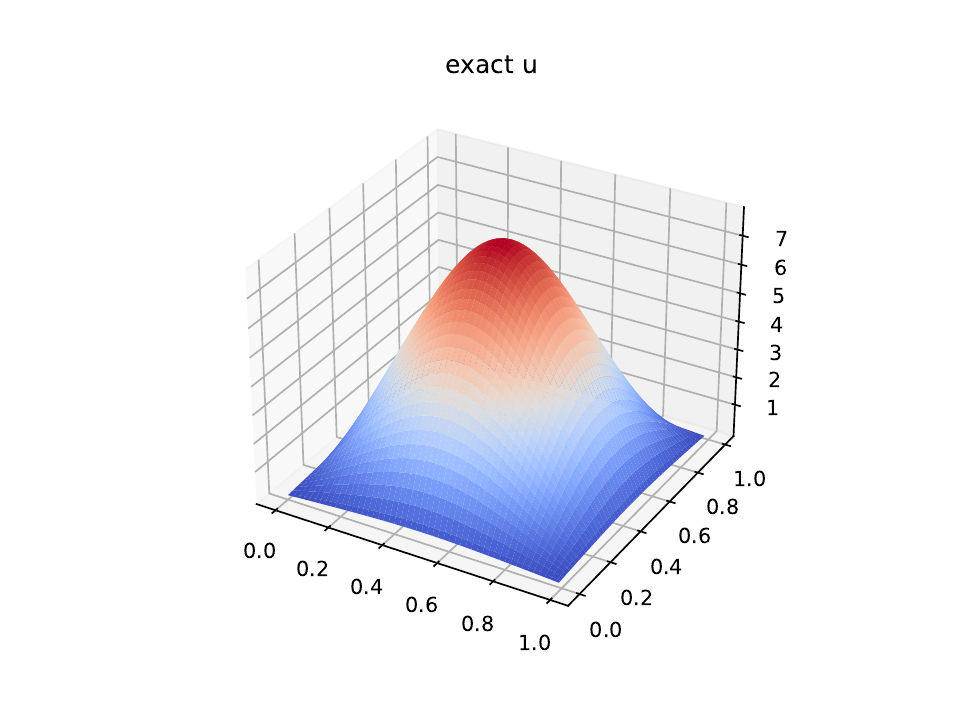}
 \end{minipage}
 \begin{minipage}{.32\textwidth}
 \hspace{-.75cm}
 \includegraphics[scale=0.4]{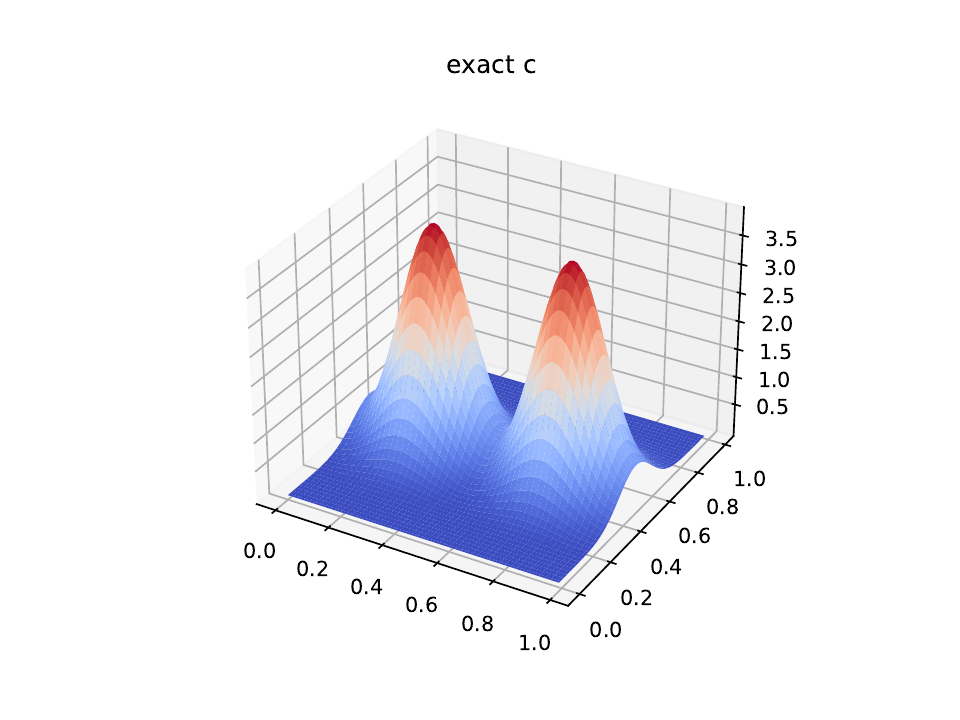}
 \end{minipage}
 \begin{minipage}{.32\textwidth}
 \hspace{-.5cm}
 \includegraphics[scale=0.4]{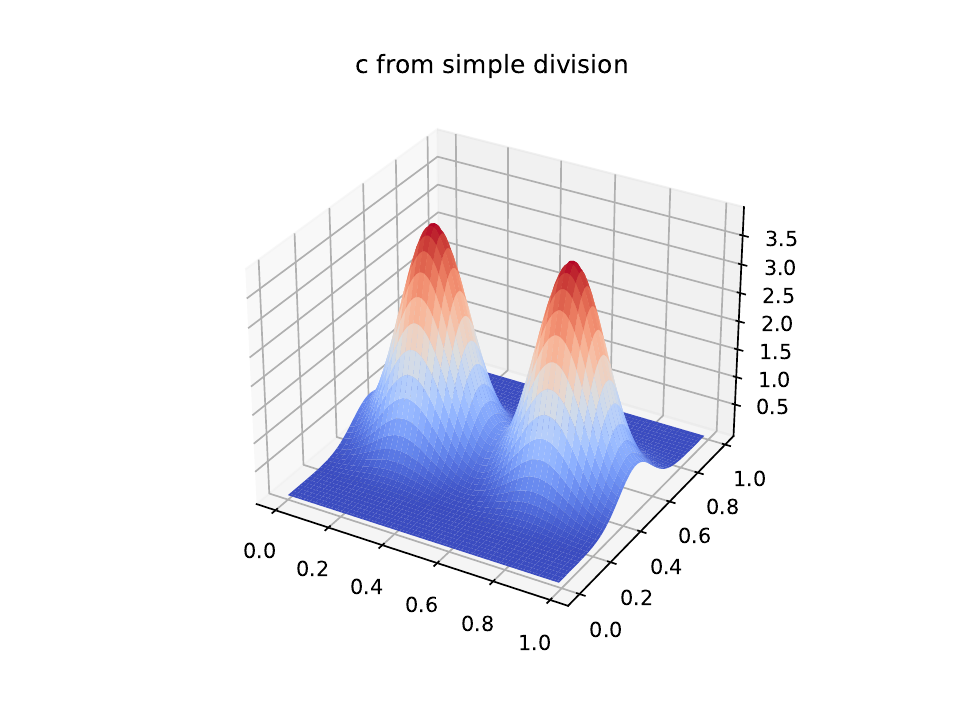}
 \end{minipage}
 \caption{Left to right: exact PDE solution $u^\dagger$ (exact
 solution to the forward problem), $c^\dagger$ (exact solution
 to the inverse problem), reconstruction of $c^\dagger$ by
 \eqref{eqn:c_from_simple_division}}
 \label{fig:noiseless_exact_solutions}
 \end{figure}
 
 \begin{figure}[H]
 \begin{minipage}{.45\textwidth}
 \includegraphics[scale=0.4]{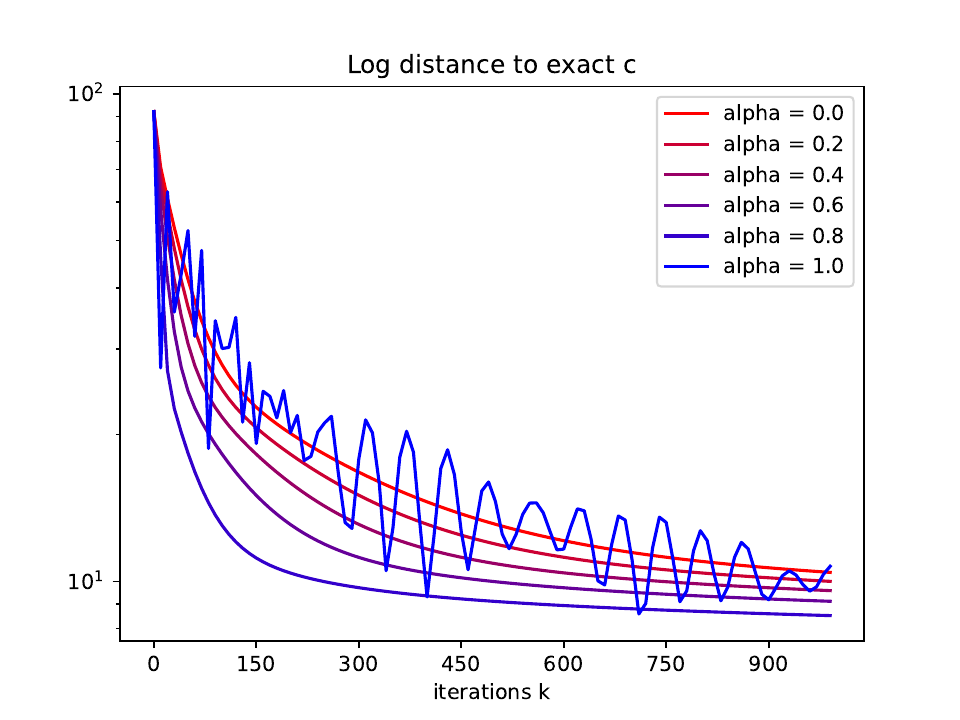}
 \end{minipage}
 \begin{minipage}{.45\textwidth}
 \includegraphics[scale=0.4]{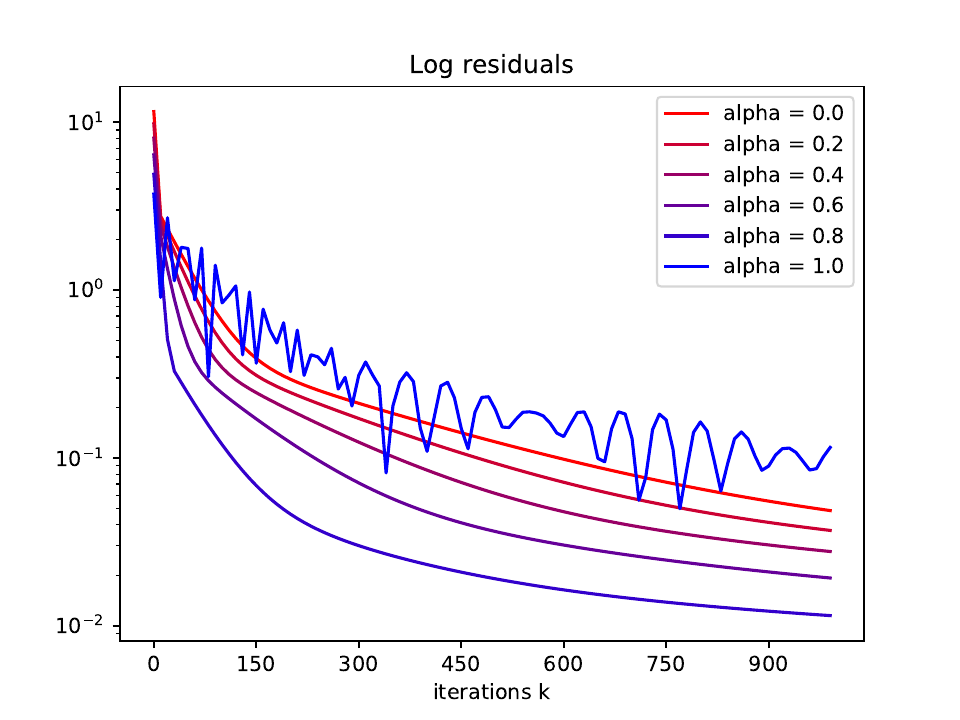}
 \end{minipage}
 \caption{Results without noise: errors for $\alpha_k\equiv\alpha\in
 \{0,0.2,...,1\}$ from red to blue color. Left: distances $\|c_k-c^\dagger\|_2$,
 right: residuals $\|F(c_k)-u^\dagger\|_2$}
 \label{fig:noiseless_res_and_distance}
 \end{figure}
 
 \begin{figure}[H]
 \begin{minipage}{.33\textwidth}
 \hspace{-1cm} 
 \includegraphics[scale=0.4]{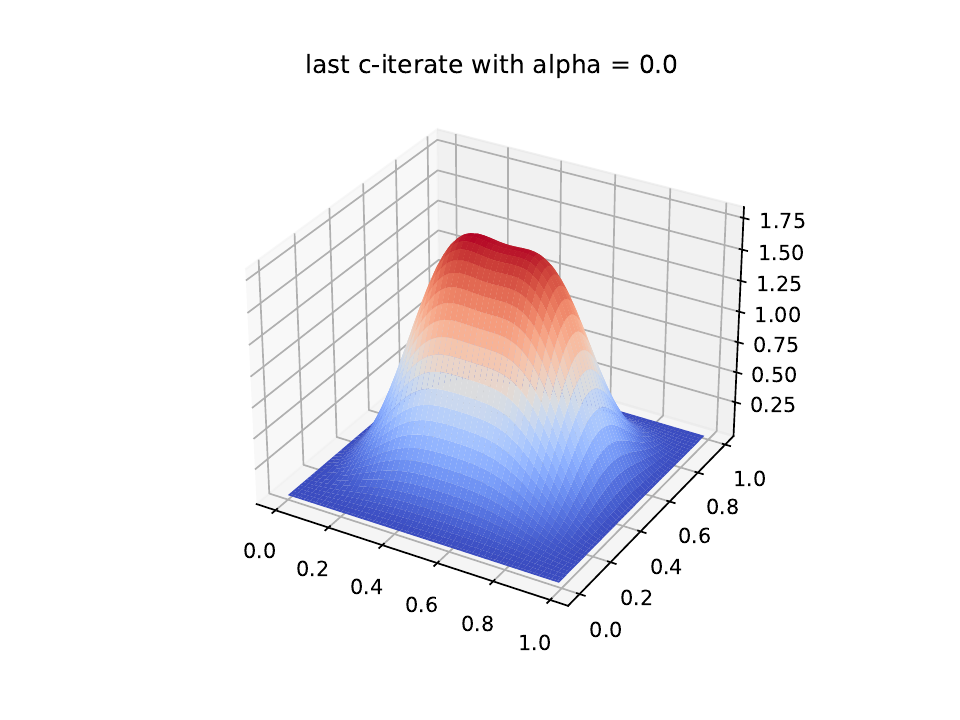}
 \end{minipage}
 \begin{minipage}{.33\textwidth}
 \hspace{-.75cm}
 \includegraphics[scale=0.4]{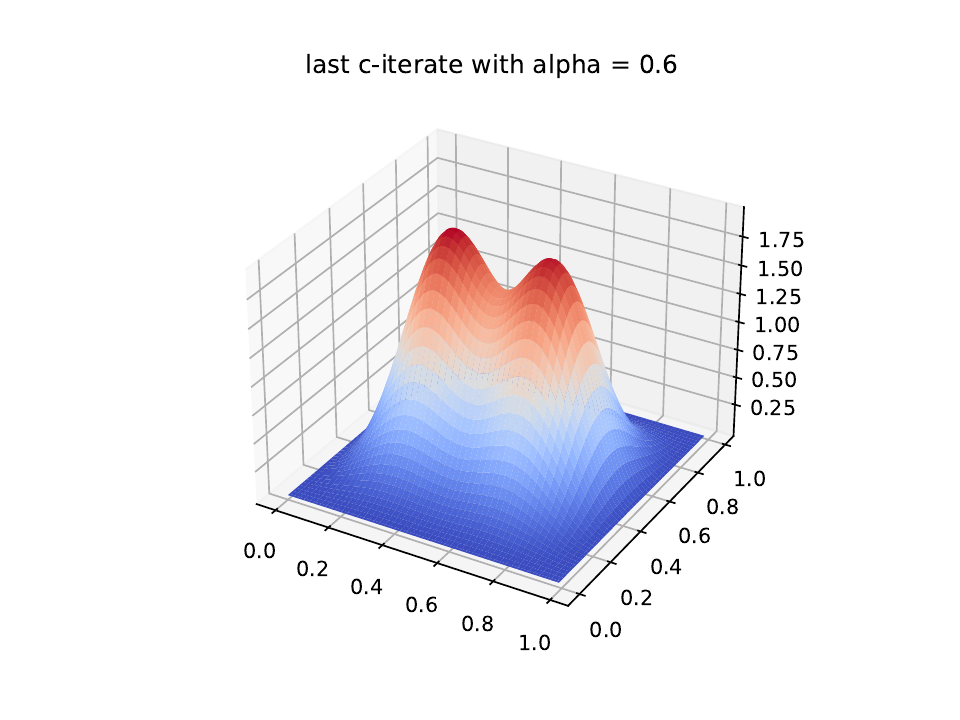}
 \end{minipage}
 \begin{minipage}{.32\textwidth}
 \hspace{-.5cm}
 \includegraphics[scale=0.4]{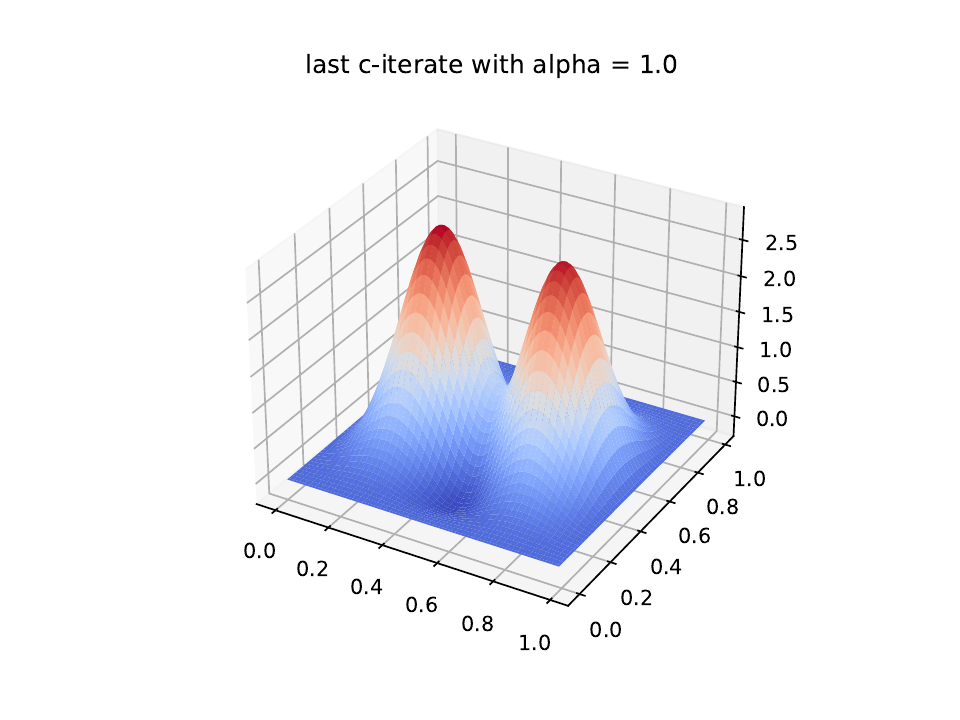}
 \end{minipage}
 \caption{Results without noise, left to right: iterates $w_{10}$
 for $\alpha=0$ (non-accelerated Levenberg-Marquardt method), $\alpha=0.6$
 and $\alpha=1$}
 \label{fig:noiseless_recos_10iter}
 \end{figure}
 
 \begin{figure}[H]
 \begin{minipage}{.32\textwidth}
 \hspace{-1cm}
 \includegraphics[scale=0.4]{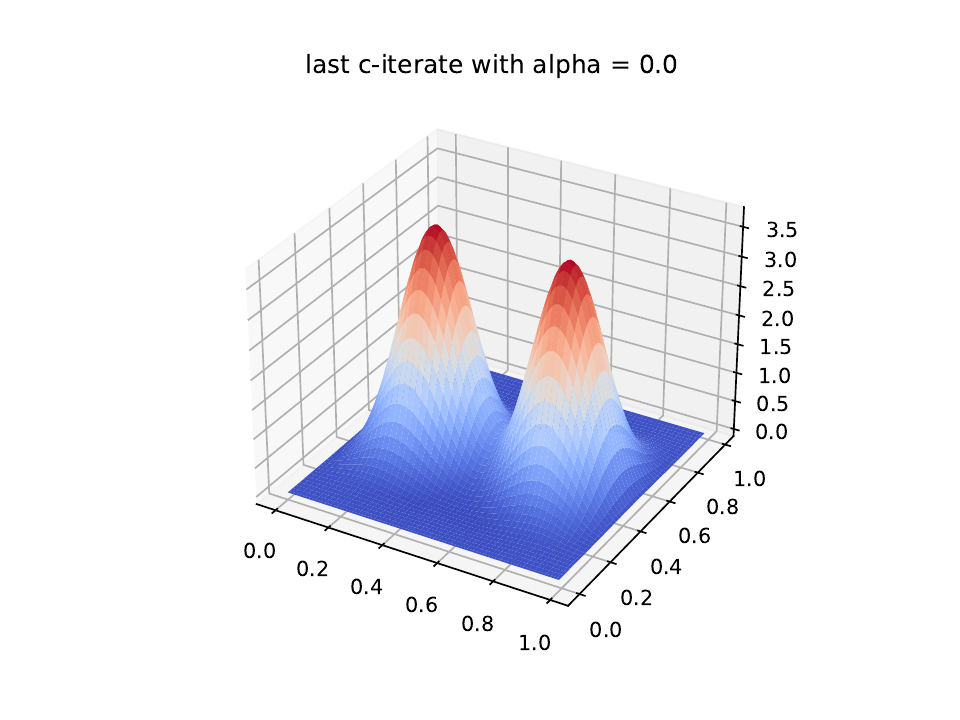}
 \end{minipage}
 \begin{minipage}{.32\textwidth}
 \hspace{-.75cm}
 \includegraphics[scale=0.4]{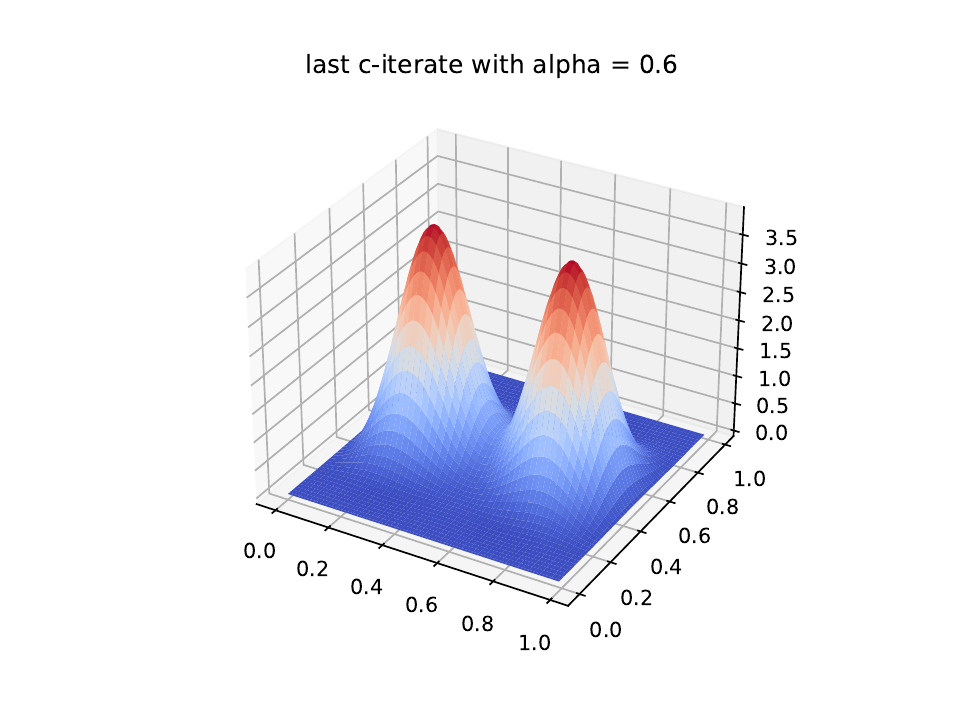}
 \end{minipage}
 \begin{minipage}{.32\textwidth}
 \hspace{-.5cm}
 \includegraphics[scale=0.4]{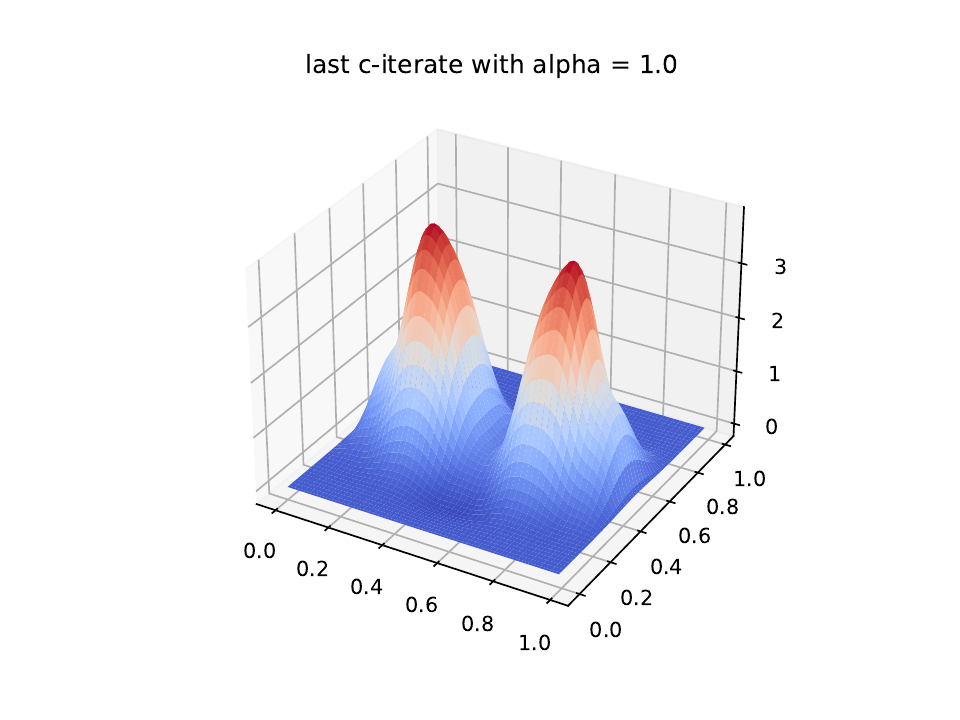}
 \end{minipage}
 \caption{Results without noise, left to right: iterates $w_{500}$
 for $\alpha=0$ (non-accelerated Levenberg-Marquardt method), $\alpha=0.6$
 and $\alpha=1$}
 \label{fig:noiseless_recos_500iter}
 \end{figure}
 
 \begin{figure}[H]
 \begin{minipage}{.45\textwidth}
 \includegraphics[scale=0.4]{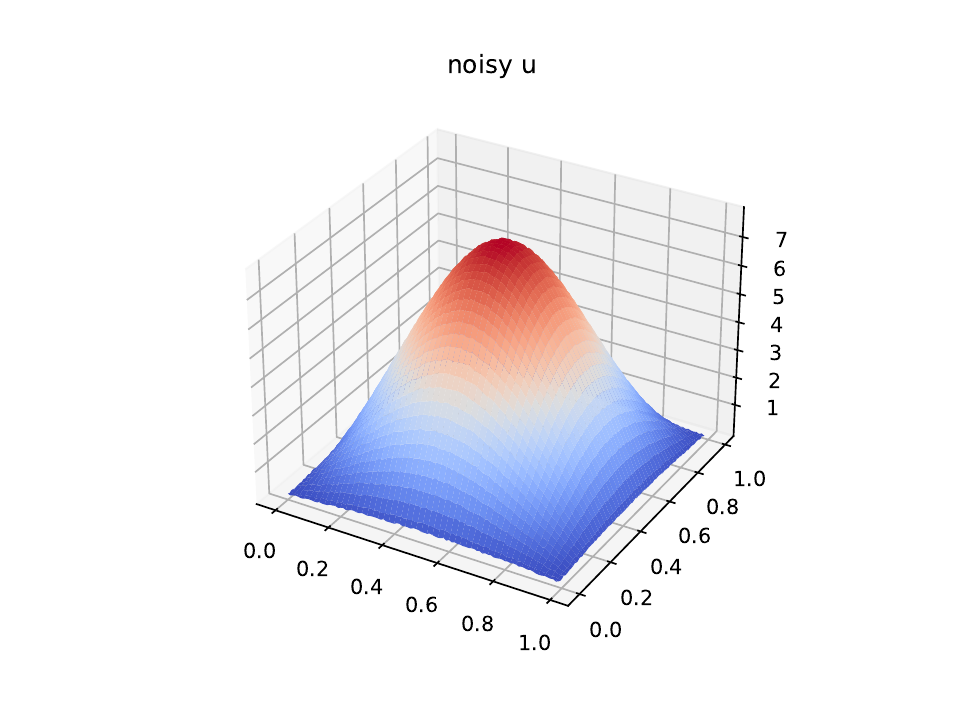}
 \end{minipage}
 \begin{minipage}{.45\textwidth}
 \includegraphics[scale=0.4]{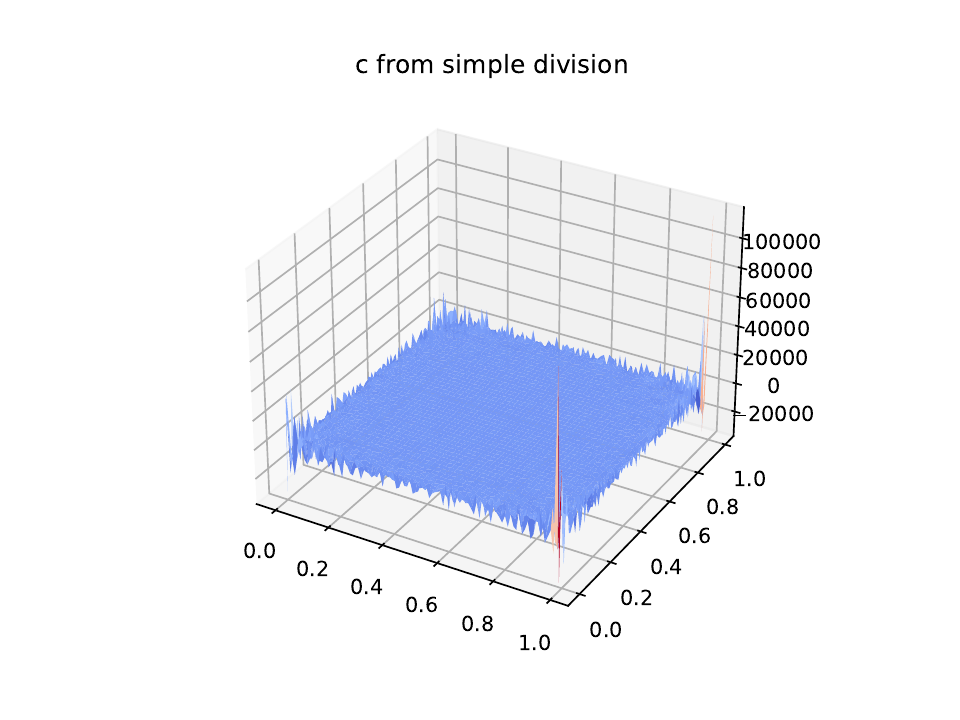}
 \end{minipage}
 \caption{Left: noisy PDE solution $u^\delta$, right: naive approximation
 of $c^\dagger$ by \eqref{eqn:c_from_simple_division}}
 \label{fig:noisy_u_noisy_and_c_naive}
 \end{figure}
 
 \begin{figure}[H]
 \begin{minipage}{.45\textwidth}
 \includegraphics[scale=0.4]{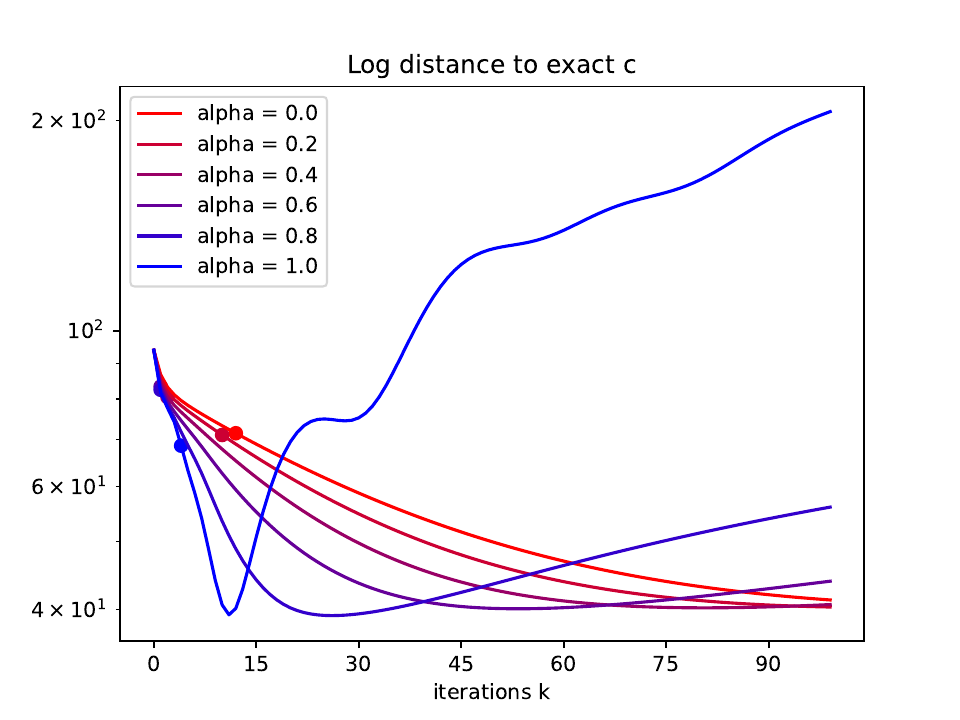}
 \end{minipage}
 \begin{minipage}{.45\textwidth}
 \includegraphics[scale=0.4]{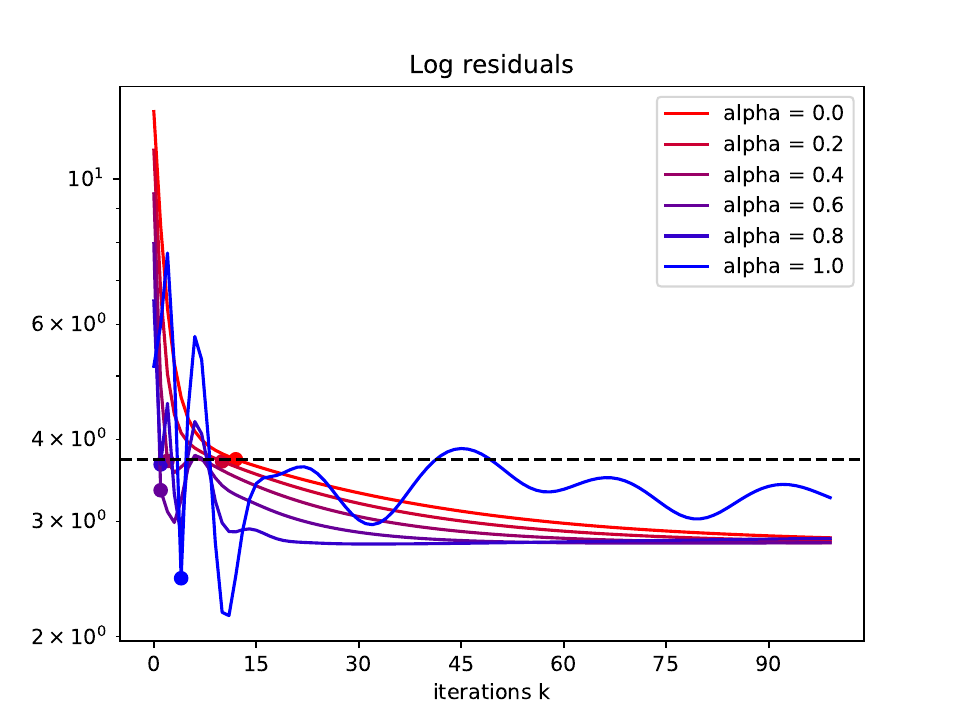}
 \end{minipage}
 \caption{Results with $1\%$ of relative noise, errors for
 $\alpha_k\equiv\alpha\in \{0,0.2,...,1\}$ from red to blue color. Left:
 distances $\|c_k-c^\dagger\|_2$, right: residuals $\|F(c_k)-u^\dagger\|_2$.
 Dots indicate stopping by Morozov's discrepancy principle with $\tau = 1$}
 \label{fig:noisy_res_and_distance}
 \end{figure}
 
 \begin{figure}[H]
 \begin{minipage}{.33\textwidth}
 \hspace{-1cm}
 \includegraphics[scale=0.4]{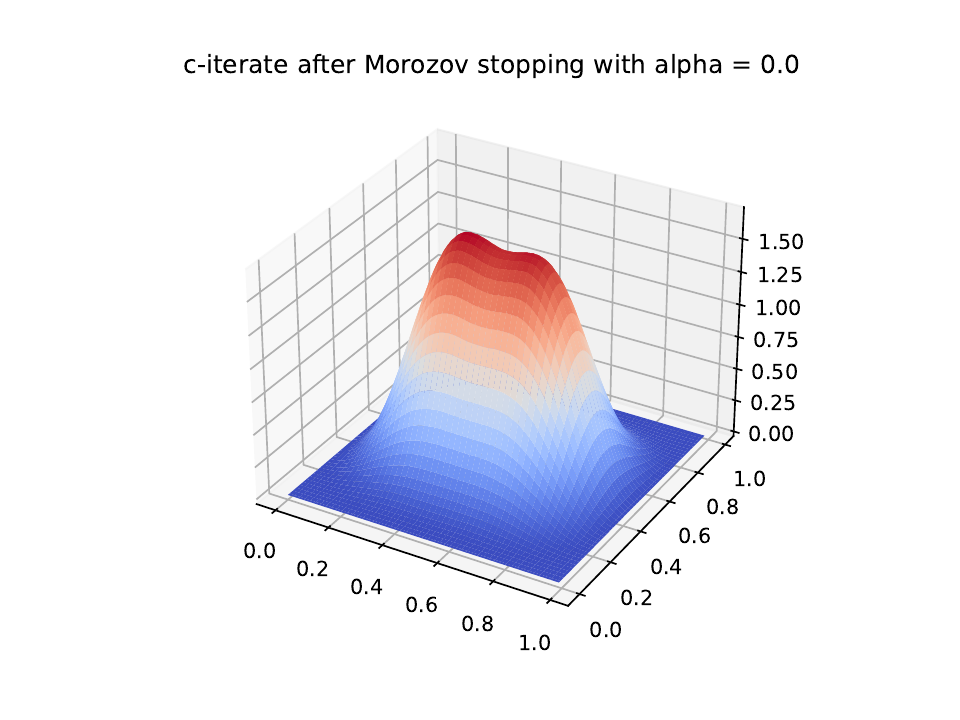}
 \end{minipage}
 \begin{minipage}{.33\textwidth}
 \hspace{-.75cm}
 \includegraphics[scale=0.4]{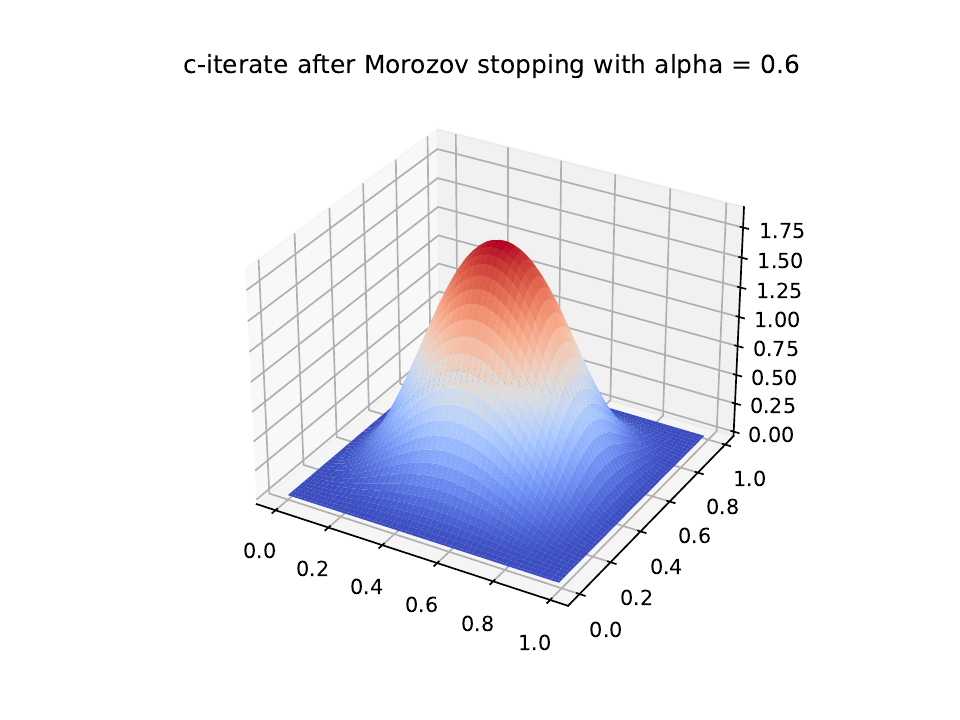}
 \end{minipage}
 \begin{minipage}{.32\textwidth}
 \hspace{-.5cm}
 \includegraphics[scale=0.4]{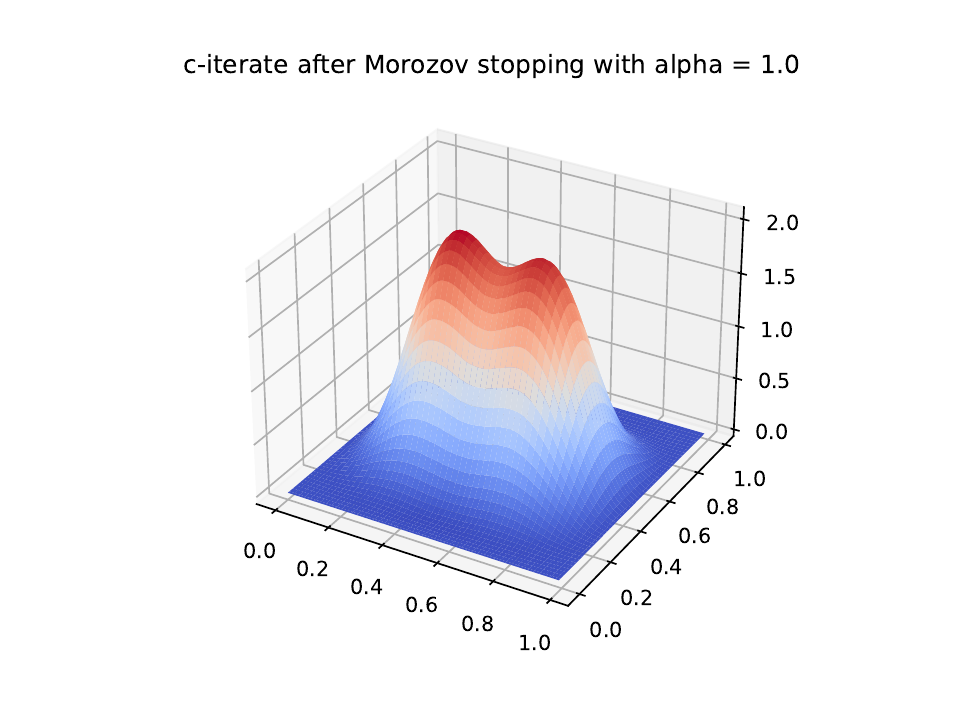}
 \end{minipage}
 \caption{Results with $1\%$ of relative noise, left to right: iterates
 $w_{k}^\delta$ stopped by Morozov's discrepancy principle with $\tau=1$ for
 $\alpha=0$ (non-accelerated Levenberg-Marquardt method), $\alpha=0.6$ and
 $\alpha=1.0$}
 \label{fig:noisy_recos_Morozov}
 \end{figure}
 
 \begin{figure}[H]
 \begin{minipage}{.32\textwidth}
 \hspace{-1cm}
 \includegraphics[scale=0.4]{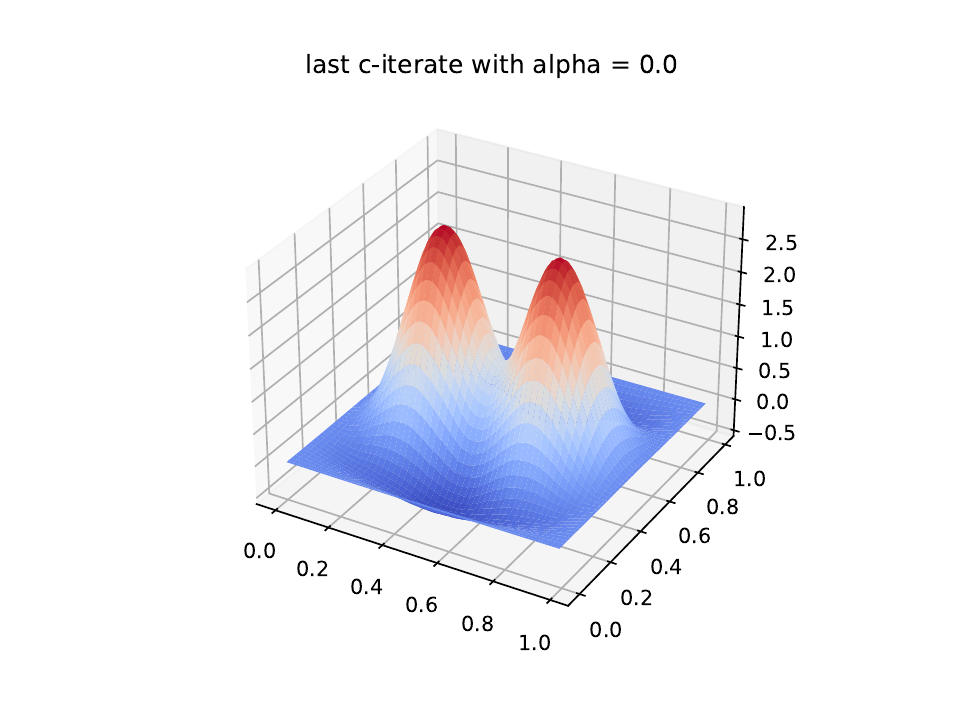}
 \end{minipage}
 \begin{minipage}{.32\textwidth}
 \hspace{-.75cm}
 \includegraphics[scale=0.4]{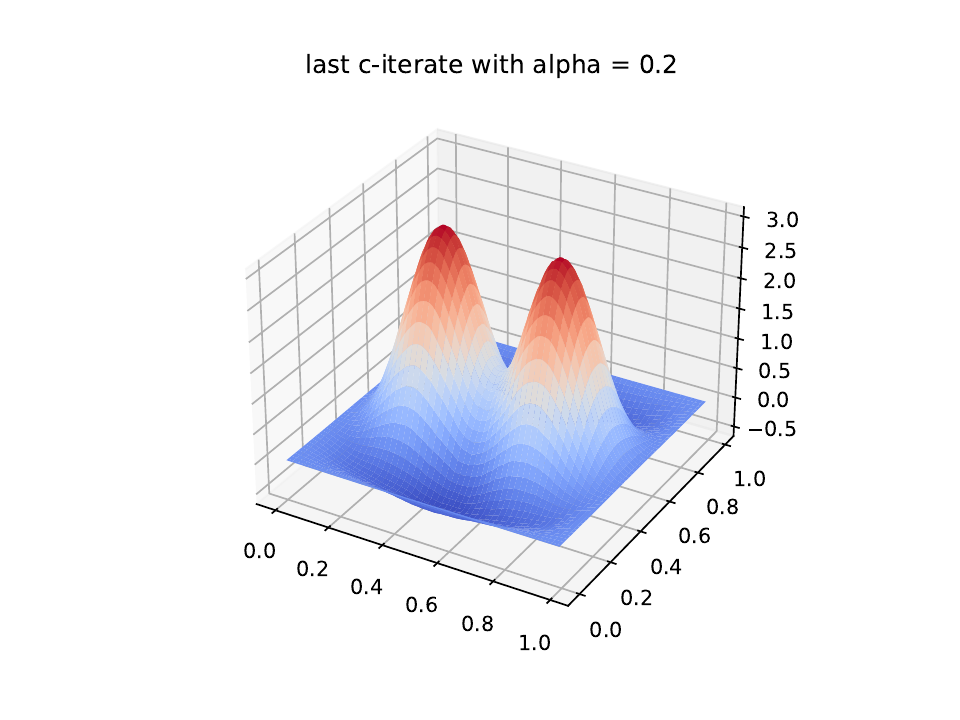}
 \end{minipage}
 \begin{minipage}{.32\textwidth}
 \hspace{-.5cm}
 \includegraphics[scale=0.4]{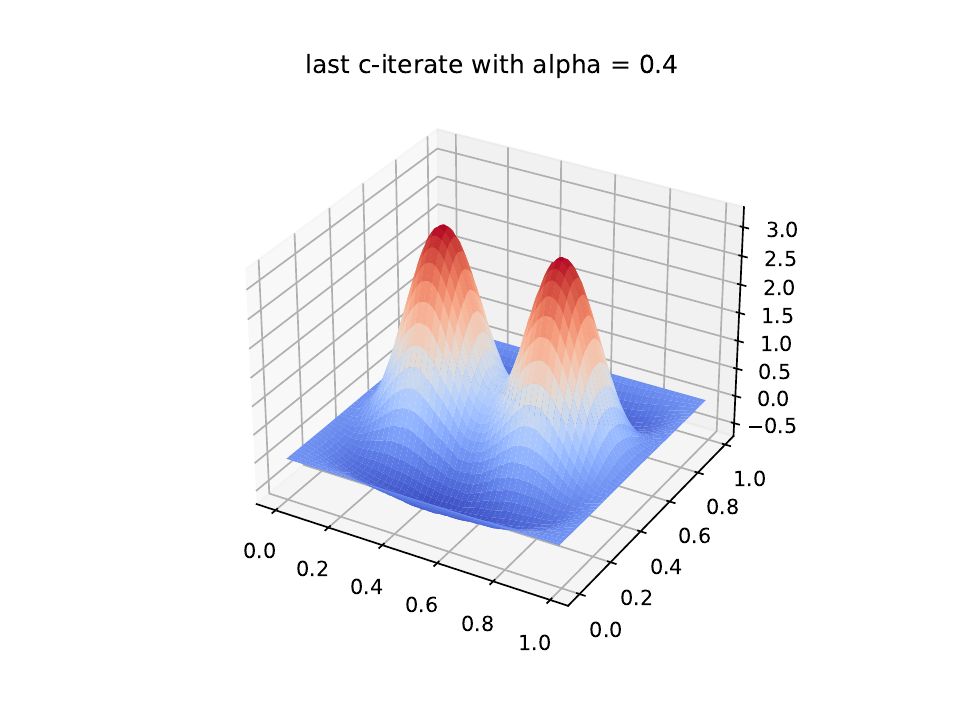}
 \end{minipage}
 \\
 \begin{minipage}{.32\textwidth}
 \hspace{-1cm}
 \includegraphics[scale=0.4]{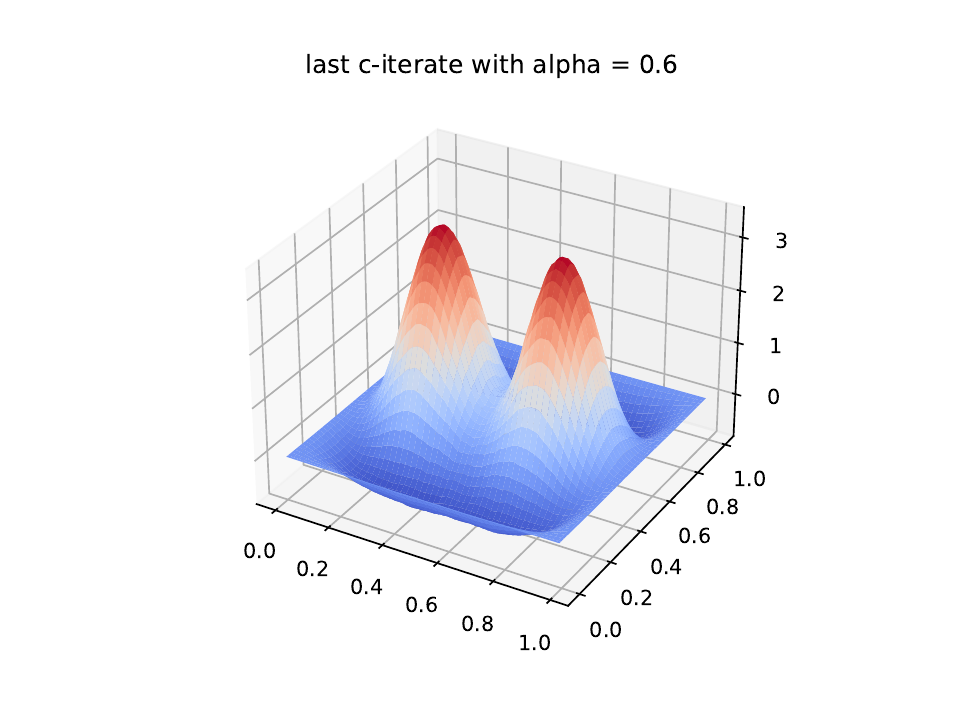}
 \end{minipage}
 \begin{minipage}{.32\textwidth}
 \hspace{-.75cm}
 \includegraphics[scale=0.4]{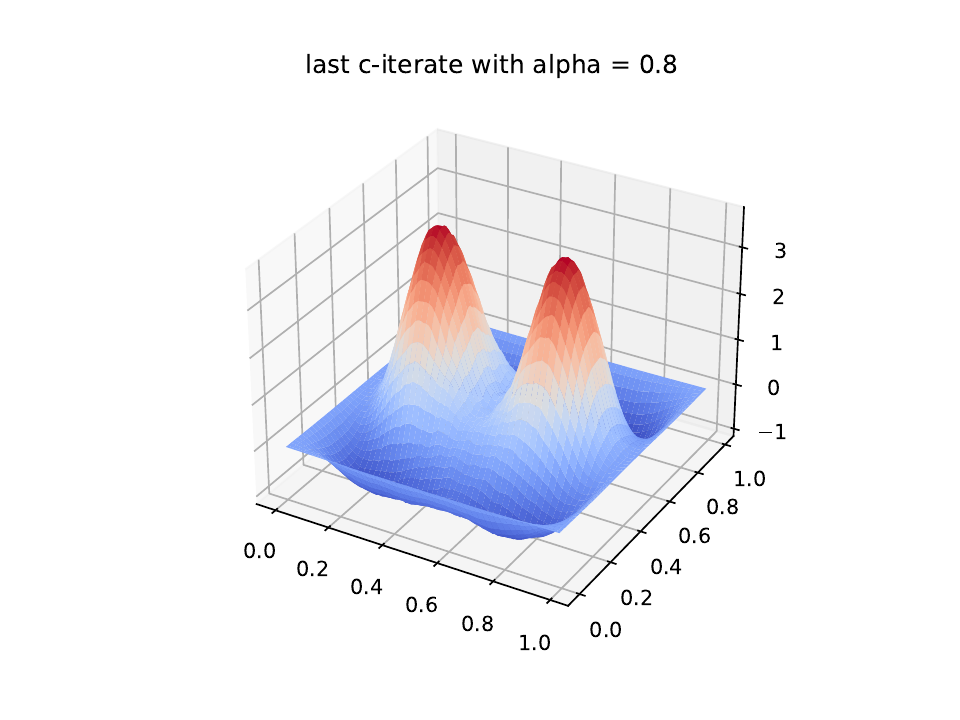}
 \end{minipage}
 \begin{minipage}{.32\textwidth}
 \hspace{-.5cm}
 \includegraphics[scale=0.4]{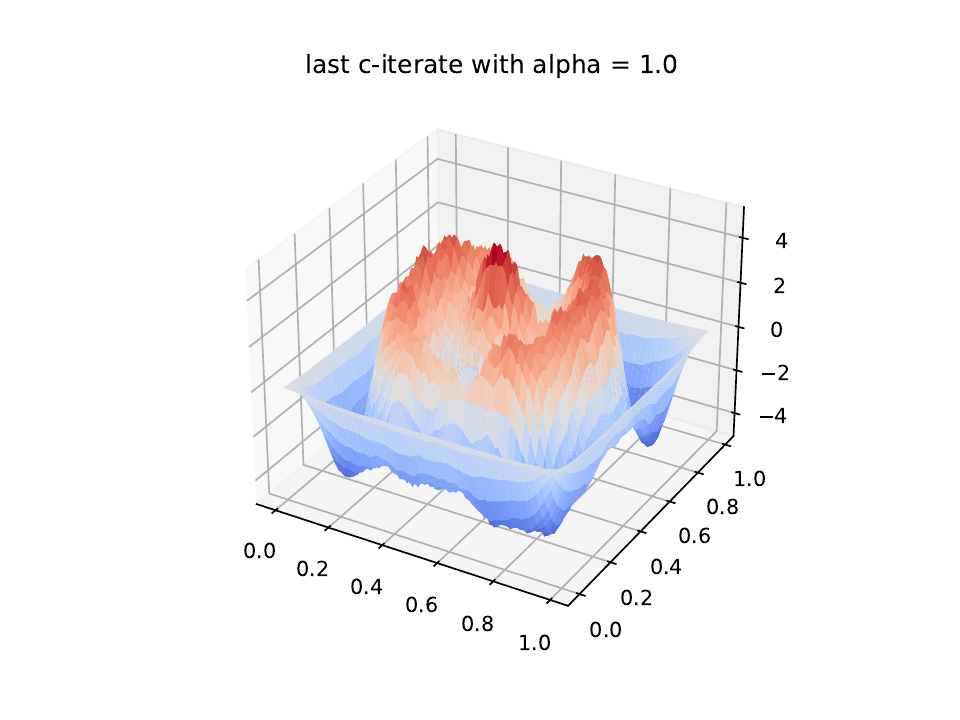}
 \end{minipage}
 \caption{Results with $1\%$ of relative noise, left to right: iterates
 $w_{100}$ for $\alpha = k/10$ for $k=0,2,...,10$}
 \label{fig:noisy_recos_100_iter}
 \end{figure}
 
 \begin{figure}[H]
 \begin{minipage}{.32\textwidth}
 \hspace{-1cm}
 \includegraphics[scale=0.4]{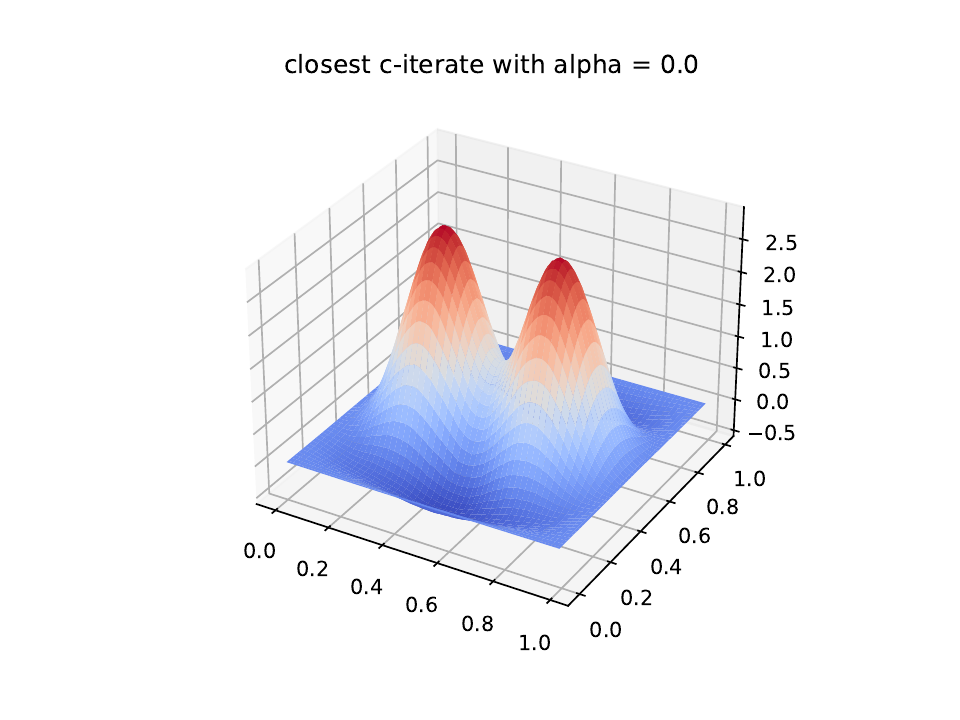}
 \end{minipage}
 \begin{minipage}{.32\textwidth}
 \hspace{-.75cm}
 \includegraphics[scale=0.4]{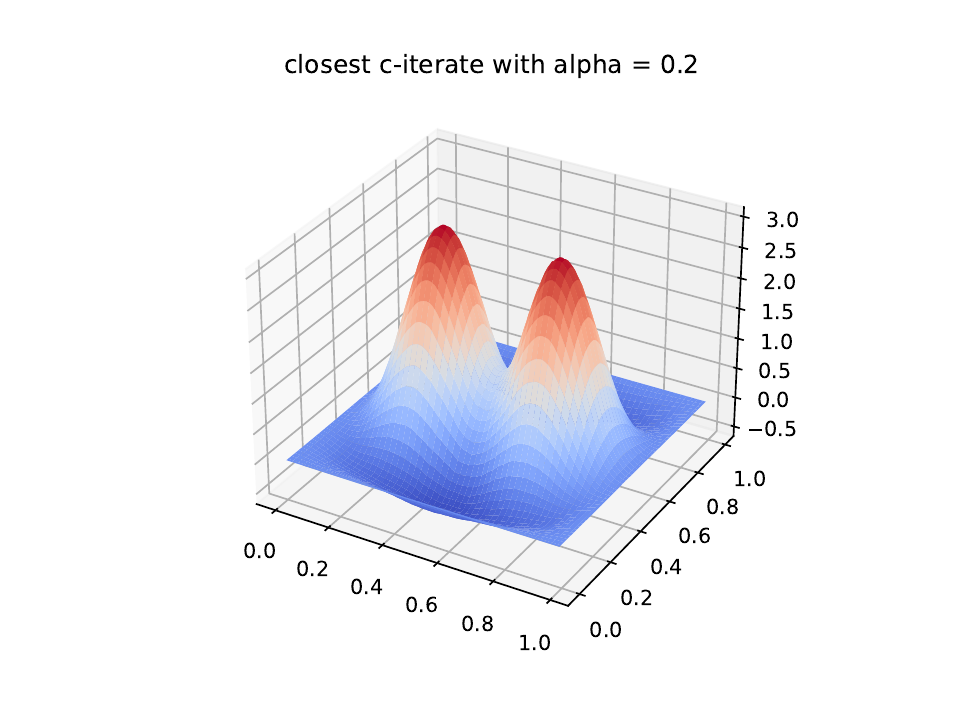}
 \end{minipage}
 \begin{minipage}{.32\textwidth}
 \hspace{-.5cm}
 \includegraphics[scale=0.4]{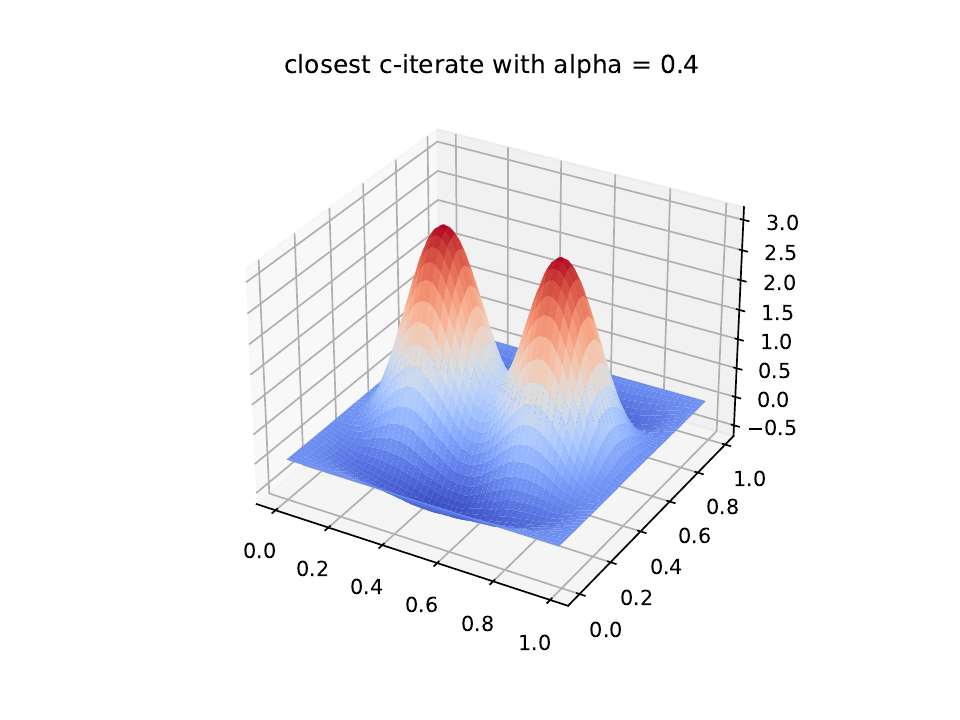}
 \end{minipage}
 \\
 \begin{minipage}{.32\textwidth}
 \hspace{-1cm}
 \includegraphics[scale=0.4]{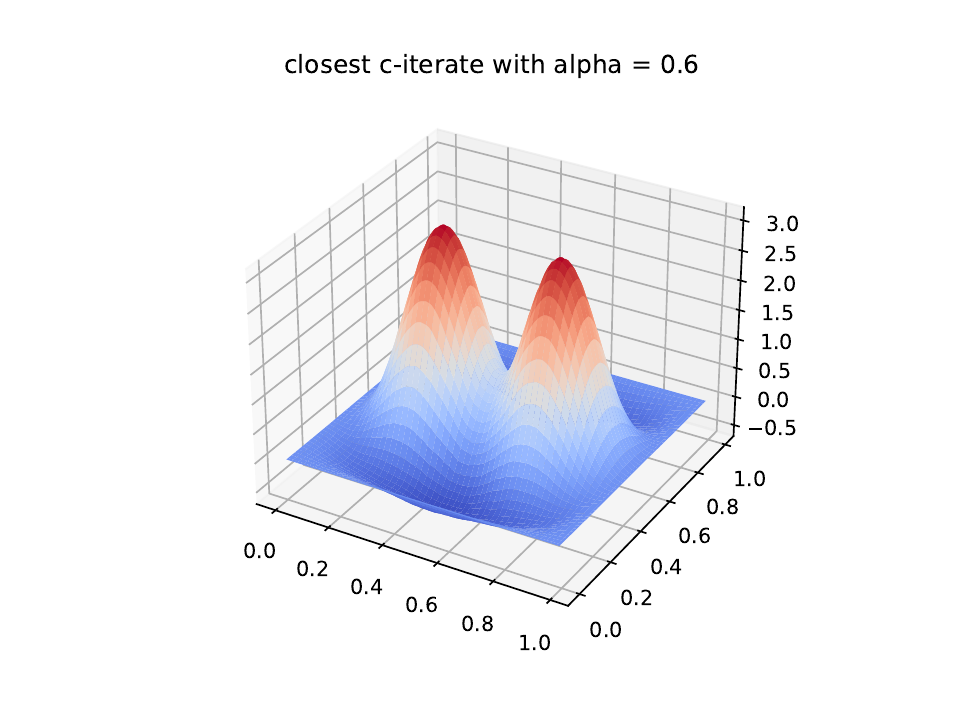}
 \end{minipage}
 \begin{minipage}{.32\textwidth}
 \hspace{-.75cm}
 \includegraphics[scale=0.4]{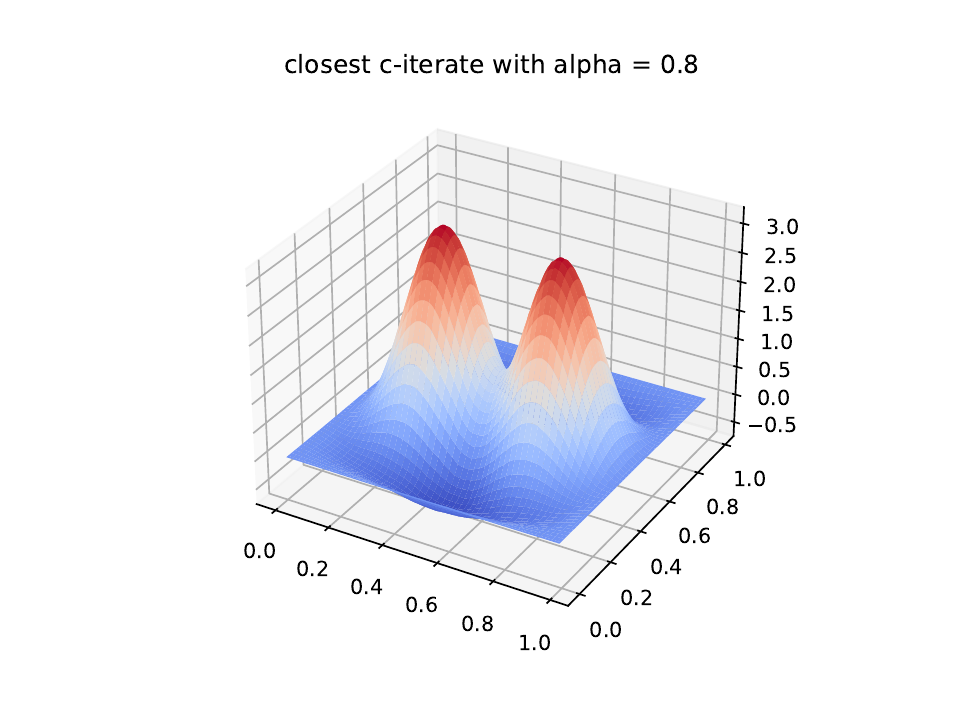}
 \end{minipage}
 \begin{minipage}{.32\textwidth}
 \hspace{-.5cm}
 \includegraphics[scale=0.4]{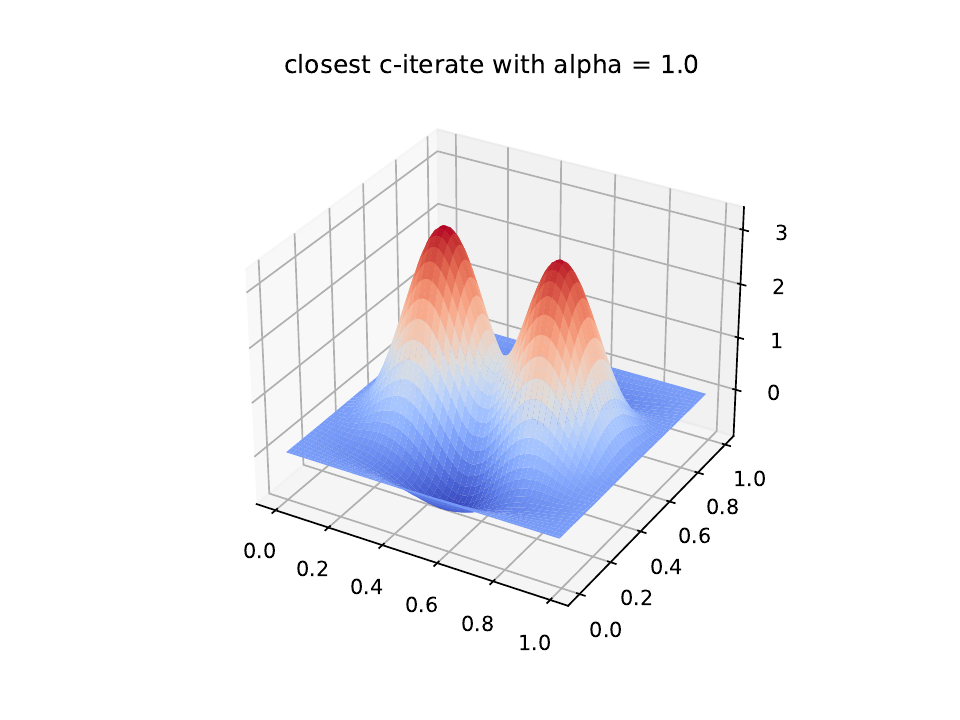}
 \end{minipage}
 \caption{Results with $1\%$ of relative noise, left to right: iterates $w_{k^*(\alpha)}$
 for $\alpha = k/10$ for $k=0,2,...,10$, where $k^*(\alpha)$ is the index where $w_k$
 is closest to $c^\dagger$ in Euclidean norm}
 \label{fig:noisy_recos_100_iter_closest}
 \end{figure}

\subsection{An inverse problem in neural network training}
\label{ssec:num-nnp}

In this section, the problem of forecasting the concentration of CO in a
gas sensor array is considered. Since we already used this model problem
for numerical experiments in \cite{RSL22, RSL24}, we are here brief in
the description.

We utilize a dataset obtained from a gas delivery platform facility at
the ChemoSignals Laboratory in the BioCircuits Institute at the University
of California, San Diego (the actual data utilized here can be accessed
on the UC Irvine Machine Learning Repository at
\href{https://archive.ics.uci.edu/ml/index.php}
{https://archive.ics.uci.edu/ml/index.php}, specifically under the dataset
titled {\em Gas sensor array under dynamic gas mixtures}).

\paragraph{Formulation of the inverse problem.}

This dataset comprises readings from 16 different chemical sensors exposed
to varying concentrations of a mixture of Ethylene and CO in the air. The
measurements were obtained through continuous acquisition of signals from
the 16-sensor array over approximately 12 hours without interruption; each
sensor data consists of $N = 4,188,262$ scalar measurements (for a
comprehensive description of the experiment, please see~\cite{FSHM15,
RSL22}).

We address the inverse problem proposed in \cite{RSL22,RSL24} namely, to
predict the reading from sensor \#16, the last sensor, by leveraging the
readings from the preceding sensors (see \cite[Figure~3]{RSL22} for
scatter plots of sensor $\#i$ readings against sensor \#16 readings, for
$i \leq 15$).
As in \cite{RSL24}, we employ a neural network (NN) in this context, which
takes the readings from the first sensors as input and produces a scalar
value as output, predicting the reading of the last sensor. Following
\cite{RSL24}, the structure of the NN used in our experiments reads:

--- Input: $z \in \R^{14}$, readings of the first 14 sensors;%
    \footnote{Sensor \#2 readings are excluded due to significant lack
    of accuracy; see \cite{RSL22}.}

--- Output: $NN(z; W, b) = \sigma(Wz + b) \in \R$.

\noindent
Here $W \in R^{1\times14}$ is a matrix of weights, $b \in R$ is a scalar
bias, and $\sigma : \R \to \R$ is the activation function defined by
\begin{equation} \label{eq:sigma}
\sigma(t) = \left\{ \begin{array}{cl}
             c+a(t-c) & , \ t \geq c \\
                    t & , \ -c < t < c \\
            -c+a(t+c) & , \ t \leq -c
            \end{array} \right. ,
\end{equation}
where $0 < a < 1$ and $c > 0$. This is a variation of the {\em saturated
linear activation function} \cite{CoPe20} (the constants $a$ and $c$
should be chosen s.t. the range of $\sigma$ contains all possible readings
of sensor \#16).

This is a shallow NN with only one layer (the output layer); the
dimention of the corresponding parameter space is 15, the dimention
of $(W,b)$. For linear $\sigma$, this NN approach simplifies to the
multiple linear regression approach considered in \cite{RSL22}.
The inverse problem under consideration is a NN training problem,
i.e. one aims to find an approximate solution to the nonlinear system
\\[1ex]
\centerline{$F_i(W,b) = y_i^\delta \, , \ \ i = 0, \dots , N_t - 1$,}
\\[1ex]
where $F_i (W, b) := NN (z_i ; W, b) = \sigma(W z_i + b)$. Here $N_t < N$
is the size of the training set and $z_i \in \R^{14}$ contains the readings
of sensors $(\#1, \#3, \#4, \dots, \#15)$, for $i = 0, \dots , N_t - 1$.
To suit our objectives, it is advantageous to express the preceding system
in the form
\begin{equation} \label{eq:ip-nn}
\mathbf{F}(W,b) \ = \mathbf{y}^\delta \, ,
\end{equation}
where $\mathbf{F}(W,b) := \big[ F_i(W,b) \big]_{i=0}^{N_t-1}$ and
$\mathbf{y}^\delta = \big[ y_i^\delta \big]_{i=0}^{N_t-1}$.

\begin{remark}[On the choice of the activation function] \label{rem:wTCC}
The real function $\sigma(t)$ in \eqref{eq:sigma} is not differentiable
at $t=-c$ and $t=c$. Consequently, the theoretical findings discussed in
Section~\ref{sec:iteration} cannot be applied to the inverse problem in
\eqref{eq:ip-nn} (indeed, the operator $\mathbf{F}$ does not satisfy (A1),
(A2)). However, one observes that:

Defining $s(t) := \partial_+\sigma(t)$, the right derivative of $\sigma$
at $t \in \R$, a direct calculation shows that
\begin{equation} \label{eq:tcc-s}
\norm{\sigma(t') - \sigma(t) - s(t)(t' - t)} \ \leq \ \widetilde{\eta}
\norm{\sigma(t') - \sigma(t)} \ \ {\it for\ all} \ t, \, t' \in \R
\end{equation}
with $\widetilde{\eta} = (1-a) a^{-1}$. 
Therefore, for each $0 \leq i < N_t$ the operator $F_i: (W,b)
\mapsto \sigma(W z_i+b)$, with $\sigma$ as in \eqref{eq:sigma},
satisfies (A2) in $\R^{14} \times \R$ with $F_i'(W,b)$ replaced by
$\widetilde{F}'_i(W,b): \R^{14} \times \R \ni (W_h,b_h) \mapsto
s(W z_i + b) (W_h z_i + b_h) \in \R$;
the corresponding constant in (A2) reads $\eta_i = (1-a) a^{-1}
\max\{ \norm{z_i} , 1 \}$.
An immediate consequence of these facts is that $\mathbf{F}:
(W,b) \mapsto \big[ F_i(W,b) \big]_{i=0}^{N_t-1}$ in \eqref{eq:ip-nn}
satisfies (A2) in $\R^{14} \times \R$, with $F_i'$ replaced by
$\widetilde{F}'$, for $\eta = \max_i \{ \eta_i \}$.

It is well known that convergence proofs of nonlinear Landweber and
LM methods can be derived under assumption (A2) where $F'$ does not
necessarily have to be the derivative of $F$ (see \cite{KalNeuSch08});
it only needs to be a linear operator that is uniformly bounded in
a neighborhood of the initial guess $x_0$. 
We conjecture that the results obtained in Section~\ref{sec:iteration}
can be extended to the framework described above. This is part of
our ongoing work.
\end{remark}

For a given pair of parameters $(W,b)$, the performance $\mathcal{P}$
of the corresponding neural network $N\!N(\cdot;W,b)$ is defined by
\begin{equation} \label{def:perf}
\mathcal{P}(N\!N(\cdot;W,b)) \ := \ 1 - \frac1{N_T} \summ_{i=N_t}^{N_t+N_T-1}
\D\frac{\norm{ N\!N(z_i; W,b) - y_i^\delta}}{\norm{y_i^\delta}} \, ,
\end{equation}
were $N_T \in \N$ is the size of the test set.
The sum in the above definition gives the average misfit betwen the
predicted value $N\!N(z_i;W,b)$ and $y_i^\delta$, evaluated over the
test set $\{ z_i , \ N_t \leq i < N_t + N_T - 1\}$.
Notice that $0 \leq \mathcal{P}(N\!N(\cdot;W,b)) \leq 1$ for all $(W,b)$,
while $\mathcal{P}(N\!N(\cdot;W,b)) = 1$ is the best possible performance.

\begin{remark}[The training set and test set]
The 'training set' and 'test set' are comprised of samples with sizes
of $N_t$ and $N_T$ respectively.
In our numerical experiments we use $N_t = 4,000,000$ and $N_T = 100,000$
(notice that $Nt + N_T < N$).
\end{remark}

\paragraph{Numerical implementations.}

In what follows the inLM method is implemented for solving the NN
training problem \eqref{eq:ip-nn}.
In view of Remark~\ref{rem:wTCC} we choose $a = \frac23$ and $c=8$ in
\eqref{eq:sigma}. Consequently, we generate an activation function
$\bar\sigma$ of the form \eqref{eq:sigma}, satisfying \eqref{eq:tcc-s}
for $\eta = 0.5$.

The sensor readings $(z_i, y_i^\delta) \in \R^{14} \times \R$ on the
training set are scaled by the factor $\max_{i\leq N_t} \norm{z_i}$.
An analogous procedure is performed on the test set.
Consequently, after scaling, it holds $\norm{z_i} \leq 1$, for
$i = 0, \dots, N_t+N_T$.
From Remark~\ref{rem:wTCC} it follows that, for $\bar\sigma$ as above,
all operators $F_i(W,b)$ satisfy (A2) (with $F'_i$ replaced by
$\widetilde{F}'_i$) for the same constant $\eta = 0.5$; the same
holds for the operator $\mathbf{F}$ in \eqref{eq:ip-nn}.

In our experiments the initial guess $(W_0,b_0)$ is a random vector with
coordinate values ranging in $(-1,1)$.
We approximate the linear solve in step [2.1] of Algorithm~\ref{alg:init-noise}
by three steps of the conjugate gradient method with zero initial value.
Three different runs of the inLM method are presented, each one for
a different choice of (constant) inertial parameter $\alpha_k$,
namely $\{ 0.05, 0,10, 0.20 \}$.

\begin{figure}[h]
\vspace{-1.2cm}
\centerline{\includegraphics[width=0.8\textwidth]{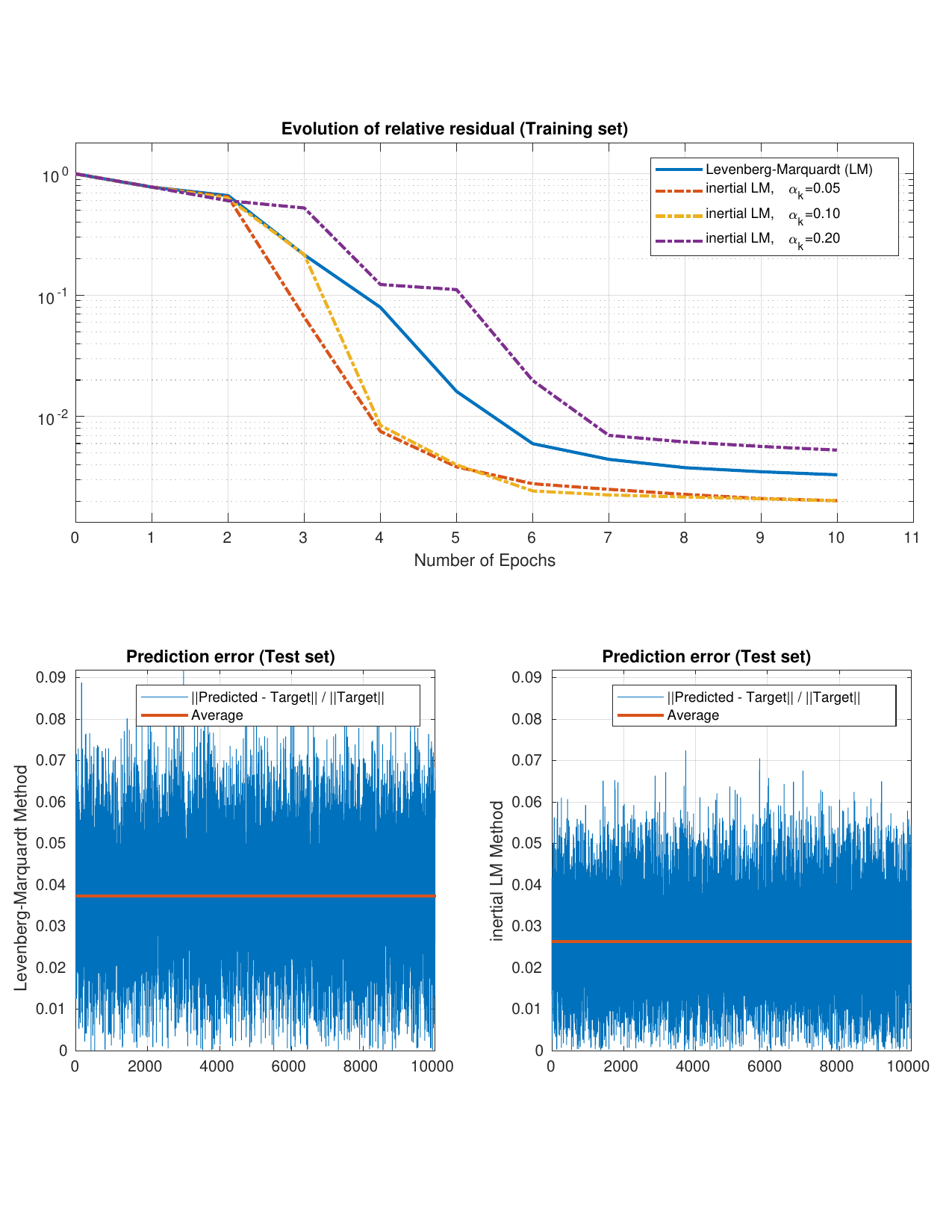}}
\vskip-2.2cm
\caption{\small Neural Network training problem.
(TOP) Evolution of relative residual for different methods;
(BOTTOM) Prediction error of the trained NN for the test-set:
LM method (left) and inLM method (right).}
\label{fig:inertial-LM}
\end{figure}

For comparison purposes the classical LM method ($\alpha_k = 0$) was
also implemented.
Since the noise level $\delta$ is not known, all methods are computed
for ten steps;\footnote{Each step corresponds to an epoch.}
after the tenth step the residual evolution stagnates for all methods.
The obtained results are summarized in Figure~\ref{fig:inertial-LM}:

\begin{itemize}
\item (TOP) Evolution of relative residual
  \,$\sum_{i=0}^{N_t-1} \frac{\norm{N\!N(z_i; W_k,b_k) - y_i^\delta}}
  {\norm{N\!N(z_i; W_0,b_0) - y_i^\delta}}$ on the training set -- all
  methods.

\item (BOTTOM-RIGHT) inLM method: relative prediction error
  $\norm{N\!N(z_i; W_{k^*},b_{k^*}) - y_i^\delta} / \norm{y_i^\delta}$ is
  plotted for the test set $\{ z_i , \ N_t \leq i < N_t + N_T - 1\}$ (BLUE);
  the average value (RED) is $0.026$. The performance of the trained
  Neural Network amounts to $\mathbf{97}$\%. \medskip
\item  (BOTTOM-LEFT): For comparison, the prediction accuracy
  of the NN trained by the LM method is plotted for the same test set (BLUE),
  the average value is $0.037$ (RED). The performance of the trained
  Neural Network amounts to $\mathbf{96}$\%. 
\end{itemize}
Here are a few observations from our numerical experiments:

\begin{itemize}
\item For constant choices of $\alpha_k$, small values yield the best
  results (in our experiments $\alpha_k=0.05$ and $0.10$).
  For even smaller constant values, such as $\alpha_k=0.01$, the performance
  of the inLM method becomes very similar to that of the LM method
  (which corresponds to $\alpha_k=0$).

\item For larger constant values of $\alpha_k$, e.g. $\alpha_k=0.20$,
  the inLM method becomes unstable and its performance deteriorates
  compared to that of the LM method.

\item The Neural Network trained using the inLM method outperforms
  the one trained with the LM method.
  Additionally, the inLM method converges faster. The residual decay for
  the inLM method stagnates after 6 steps, whereas it takes 10 steps
  for the LM method (see Figure~\ref{fig:inertial-LM}).
\end{itemize}

\section{Final remarks and conclusions} \label{sec:conclusions}

In this manuscript we propose and analyze an implicit inertial type
iteration, namely the {\em inertial Levenberg Marquardt} (inLM) method,
as an alternative for obtaining stable approximate solutions to
nonlinear ill-posed operator equations.
This new method can be considered as an extension of the classical
Levenberg Marquardt (LM) method (indeed, if the inertial parameters
$\alpha_k$ are set to zero the inLM reduces to the LM method).

The main results discussed in this notes are: boundedness of the
sequences $(x_k)$ and $(w_k)$ generated by the inLM method
(Propositions~\ref{prop:gain} and~\ref{prop:bound}), strong convergence
for exact data (Theorem~\ref{th:conv}), stability and semi-convergence
for noisy data (Theorems~\ref{th:stabil} and~\ref{th:semi-conv} respectively).
We also provide a bound for the stopping index in the noisy data case
(Proposition~\ref{prop:kstar}).

In Section~\ref{sec:numerics} two distinct ill-posed problems are used
to investigate the numerical efficiency of the proposed inLM method:
A parameter identification problem in an elliptic PDE and an inverse
problem in neural network training.

The preliminary results obtained in our numerical experiments indicate
a better performance (faster convergence) of the inLM method when
compared to the LM method.
The inLM method not only converges faster than the LM method (as shown
in Figures~\ref{fig:noisy_res_and_distance} and~\ref{fig:inertial-LM}),
but it also attains an approximate solution with a significantly smaller
residual in the second inverse problem.

\section*{Acknowledgments}

AL acknowledges support from the AvH Foundation. Significant part of
this manuscript was writen while this author was on sabbatical leave
at EMAp, Getulio Vargas Fundation, Rio de Janeiro, Brazil.
DAL acknowledges support from the AvH foundation.



\bibliographystyle{amsplain}
\bibliography{inertLM}

\providecommand{\bysame}{\leavevmode\hbox to3em{\hrulefill}\thinspace}
\providecommand{\MR}{\relax\ifhmode\unskip\space\fi MR }
\providecommand{\MRhref}[2]{%
  \href{http://www.ams.org/mathscinet-getitem?mr=#1}{#2}
}
\providecommand{\href}[2]{#2}
\begin{thebibliography}{10}

\bibitem{AA01}
F.~Alvarez and H.~Attouch, \emph{An inertial proximal method for maximal
  monotone operators via discretization of a nonlinear oscillator with
  damping}, Set-Valued Anal. \textbf{9} (2001), no.~1-2, 3--11.

\bibitem{AABR02}
F.~Alvarez, H.~Attouch, J.~Bolte, and P.~Redont, \emph{A second-order
  gradient-like dissipative dynamical system with {H}essian-driven damping.
  {A}pplication to optimization and mechanics}, J. Math. Pures Appl. (9)
  \textbf{81} (2002), no.~8, 747--779. \MR{1930878}

\bibitem{APR16}
Hedy Attouch, Juan Peypouquet, and Patrick Redont, \emph{Fast convex
  optimization via inertial dynamics with {H}essian driven damping}, J.
  Differential Equations \textbf{261} (2016), no.~10, 5734--5783. \MR{3548269}

\bibitem{Bau87}
J.~Baumeister, \emph{{Stable Solution of Inverse Problems}}, Advanced Lectures
  in Mathematics, Friedr. Vieweg \& Sohn, Braunschweig, 1987. \MR{889048}

\bibitem{BeTe09}
A.~Beck and M.~Teboulle, \emph{A fast iterative shrinkage-thresholding
  algorithm for linear inverse problems}, SIAM J. Imaging Sci. \textbf{2}
  (2009), no.~1, 183--202.

\bibitem{BLS20}
R.~Boiger, A.~Leit{\~a}o, and B.F. Svaiter, \emph{Range-relaxed criteria for
  choosing the {L}agrange multipliers in nonstationary iterated {T}ikhonov
  method}, IMA Journal of Numerical Analysis \textbf{40} (2020), no.~1,
  606--627.

\bibitem{CoPe20}
P.L. Combettes and J.-C. Pesquet, \emph{Deep neural network structures solving
  variational inequalities}, Set-Valued Var. Anal. \textbf{28} (2020), no.~3,
  491--518.

\bibitem{CBL11}
A.~De~Cezaro, J.~Baumeister, and A.~Leit{\~a}o, \emph{Modified iterated
  {T}ikhonov methods for solving systems of nonlinear ill-posed equations},
  Inverse Probl. Imaging \textbf{5} (2011), no.~1, 1--17.

\bibitem{Eng87}
H.W. Engl, \emph{{On the choice of the regularization parameter for iterated
  {T}ikhonov regularization of ill-posed problems}}, J. Approx. Theory
  \textbf{49} (1987), no.~1, 55--63.

\bibitem{EngHanNeu96}
H.W. Engl, M.~Hanke, and A.~Neubauer, \emph{{Regularization of Inverse
  Problems}}, Kluwer Academic Publishers, Dordrecht, 1996.

\bibitem{FSHM15}
J.~Fonollosa, S.~Sheik, R.~Huerta, and S.~Marco, \emph{Reservoir computing
  compensates slow response of chemosensor arrays exposed to fast varying gas
  concentrations in continuous monitoring}, Sensors and Actuators B: Chemical
  \textbf{215} (2015), 618--629.

\bibitem{GS00}
C.~W. Groetsch and O.~Scherzer, \emph{Non-stationary iterated
  {T}ikhonov-{M}orozov method and third-order differential equations for the
  evaluation of unbounded operators}, Math. Methods Appl. Sci. \textbf{23}
  (2000), no.~15, 1287--1300.

\bibitem{HG98}
M.~Hanke and C.~W. Groetsch, \emph{{Nonstationary Iterated {T}ikhonov
  Regularization}}, J. Optim. Theory Appl. \textbf{98} (1998), no.~1, 37--53.

\bibitem{HanNeuSch95}
M.~Hanke, A.~Neubauer, and O.~Scherzer, \emph{A convergence analysis of
  {L}andweber iteration for nonlinear ill-posed problems}, Numer. Math.
  \textbf{72} (1995), 21--37.

\bibitem{KalNeuSch08}
B.~Kaltenbacher, A.~Neubauer, and O.~Scherzer, \emph{{Iterative Regularization
  Methods for Nonlinear Ill-Posed Problems}}, Radon Series on Computational and
  Applied Mathematics, vol.~6, Walter de Gruyter GmbH \& Co. KG, Berlin, 2008.

\bibitem{KN08}
S.~Kindermann and A.~Neubauer, \emph{On the convergence of the quasioptimality
  criterion for (iterated) {T}ikhonov regularization}, Inverse Probl. Imaging
  \textbf{2} (2008), no.~2, 291--299.

\bibitem{Kir96}
A.~Kirsch, \emph{{An Introduction to the Mathematical Theory of Inverse
  Problems}}, Applied Mathematical Sciences, vol. 120, Springer-Verlag, New
  York, 1996.

\bibitem{LP15}
Dirk~A. Lorenz and Thomas Pock, \emph{An inertial forward-backward algorithm
  for monotone inclusions}, J. Math. Imaging Vision \textbf{51} (2015), no.~2,
  311--325. \MR{3314536}

\bibitem{Mar70}
B.~Martinet, \emph{R\'{e}gularisation d'in\'{e}quations variationnelles par
  approximations successives}, Rev. Fran\c{c}aise Informat. Recherche
  Op\'{e}rationnelle \textbf{4} (1970), no.~S\'{e}r. {\rm R}-3, 154--158.

\bibitem{MO03}
A.~Moudafi and M.~Oliny, \emph{Convergence of a splitting inertial proximal
  method for monotone operators}, J. Comput. Appl. Math. \textbf{155} (2003),
  no.~2, 447--454.

\bibitem{Nes83}
Y.E. Nesterov, \emph{A method for solving the convex programming problem with
  convergence rate {$O(1/k^{2})$}}, Dokl. Akad. Nauk SSSR \textbf{269} (1983),
  no.~3, 543--547.

\bibitem{Nes04}
Yurii Nesterov, \emph{Introductory lectures on convex optimization}, Applied
  Optimization, vol.~87, Kluwer Academic Publishers, Boston, MA, 2004, A basic
  course. \MR{2142598}

\bibitem{Pol64}
B.~T. Poljak, \emph{Some methods of speeding up the convergence of iterative
  methods}, \v{Z}. Vy\v{c}isl. Mat i Mat. Fiz. \textbf{4} (1964), 791--803.
  \MR{169403}

\bibitem{RLM24}
J.~Rabelo, A.~Leit{\~a}o, and A.L. Madureira, \emph{On inertial
  iterated-tikhonov methods for solving linear ill-posed problems}, Inverse
  Problems \textbf{40} (2024), no.~3, 035002.

\bibitem{RSL22}
J.C. Rabelo, Y.~Saporito, and A.~Leit\~{a}o, \emph{On stochastic {K}aczmarz
  type methods for solving large scale systems of ill-posed equations}, Inverse
  Problems \textbf{38} (2022), no.~2, 025003.

\bibitem{RSL24}
\bysame, \emph{On projective stochastic-gradient type methods for solving large
  scale systems of nonlinear ill-posed equations: Applications to machine
  learning}, Inverse Problems (2024), submitted.

\bibitem{Roc76}
R.T. Rockafellar, \emph{Monotone operators and the proximal point algorithm},
  SIAM J. Control Optim. \textbf{14} (1976), no.~5, 877--898.

\bibitem{Sch93a}
O.~Scherzer, \emph{Convergence rates of iterated {T}ikhonov regularized
  solutions of nonlinear ill-posed problems}, Numer. Math. \textbf{66} (1993),
  no.~2, 259--279.

\bibitem{SBC16}
Weijie Su, Stephen Boyd, and Emmanuel~J. Cand\`es, \emph{A differential
  equation for modeling {N}esterov's accelerated gradient method: theory and
  insights}, J. Mach. Learn. Res. \textbf{17} (2016), Paper No. 153, 43.
  \MR{3555044}

\end{thebibliography}

\end{document}